\newtheorem{theorem}{Theorem}
\newtheorem{lem}{Lemma}
\newtheorem*{cor}{Corollary}
\newtheorem{prop}{Proposition}
\newtheorem{definition}{Definition}
\newtheorem{remark}{Remark}
\newcommand{\ra}{\rightarrow}
\newcommand{\dist}{{\rm dist}}
\newcommand{\R}{{\mathbb R}}
\newcommand{\Rn}{{\R^n}}
\newcommand{\N}{{\mathbb N}}
\newcommand{\loc}{{\rm loc}}
\newcommand{\supp}{\mbox{supp}\;}
\newcommand{\diam}{\mbox{diam}}
\newcommand{\Loneloc}{{L^1_\loc}}
\newcommand{\BMO}{{\rm BMO}}
\newcommand{\bmo}{{\rm bmo}}
\newcommand{\VMO}{{\rm VMO}}
\newcommand{\vmo}{{\rm vmo}}
\newcommand{\CMO}{{\rm CMO}}
\newcommand{\cO}{{\mathcal O}}
\newcommand{\I}{{\mathcal I}}
\newcommand{\cF}{{\mathcal F}}
\newcommand{\lC}{\leq C}
\newcommand{\Omegabar}{{\overline{\Omega}}}
\newcommand{\bOmega}{{\partial\Omega}}
\newcommand{\bS}{{\partial S}}
\newcommand{\Omegatil}{{\widetilde{\Omega}}}
\newcommand{\Btil}{{\widetilde{B}}}
\newcommand{\psimu}{{\psi^\mu}}
\newcommand{\psimuS}{{\psi^\mu_S}}
\newcommand{\psimuSp}{{\psi^\mu_{S'}}}
\newcommand{\psimuSi}{{\psi^{\mu_i}_{S_i}}}
\newcommand{\psitil}{{\tilde{\psi}}}
\newcommand{\tphi}{\tilde{\phi}}
\newcommand{\etabar}{\overline{\eta}}
\newcommand{\omegabar}{\overline{\omega}}
\begin{document}

\title{On the extension of VMO functions}

\author[A. Butaev]{Almaz Butaev}
\address{(A.B.) Department of Mathematics and Statistics, Concordia University, Montreal, Quebec, H3G 1M8, Canada}
\email{almaz.butaev@concordia.ca}

\author[G. Dafni]{Galia Dafni}
\address{(G.D.) Department of Mathematics and Statistics, Concordia University, Montreal, Quebec, H3G 1M8, Canada}
\email{galia.dafni@concordia.ca}

\thanks{This work was partially funded by the Natural Sciences and Engineering Research Council (NSERC) of Canada, the Centre de recherches math\'e{}matiques (CRM)
and the Fonds de recherche du Qu\'e{}bec -- Nature et technologies (FRQNT)}

\keywords{Bounded mean oscillation, vanishing mean oscillation, extension, uniform domain}
\subjclass[2010]{42B35, 46E30}

\maketitle

\begin{abstract}We consider functions of vanishing mean oscillation on a bounded domain $\Omega$ and prove a $\VMO$ analogue
of the extension theorem of P. Jones for $\BMO(\Omega)$.  We show that if $\Omega$  satisfies the same condition imposed
by Jones (i.e.\ is a uniform domain), there is a linear extension map from $\VMO(\Omega)$ to $\VMO(\Rn)$ which is bounded in
the $\BMO$ norm.  Moreover, if such an extension map exists from $\VMO(\Omega)$ to $\BMO(\Rn)$, then the domain is uniform.
\end{abstract}

\section{Introduction}
The space of functions of bounded mean oscillation, $\BMO$, was introduced by John and Nirenberg in \cite{JN}.
Let $Q$ denote a cube in $\Rn$ with sides parallel to the coordinate axes.
The mean oscillation of an integrable function
$f$ on $Q$ is given by
$$\fint_Q  |f(x) - f_Q| dx : = \frac{1}{|Q|} \int_Q |f(x) - f_Q| dx,$$
where $|Q|$ is the volume of the cube and
$f_Q =  \fint_Q f(x) dx$ is the average of $f$ on $Q$.
For a domain $\Omega \subset \Rn$, $\BMO(\Omega)$
is the collection of those functions $f \in \Loneloc(\Omega)$ for which
\begin{equation}
\label{eqn-norm}
\|f\|_{\BMO(\Omega)}:= \sup_{Q \subset \Omega} \fint_Q |f(x) - f_Q| dx < \infty,
\end{equation}
and $\|\cdot\|_{\BMO(\Omega)}$ defines a norm which makes $\BMO(\Omega)$ a Banach space modulo constants.
It is customary to assume functions in $\BMO$ are real-valued.
This space was considered by Jones \cite{Jones}, who gave a necessary and sufficient condition on a domain $\Omega$ so that
$\BMO(\Omega)$ coincides with the restrictions to $\Omega$ of functions in $\BMO(\Rn)$, $n \geq 2$.  In what follows we will refer to this
condition, which will be defined in Section~\ref{sec-domain}, as {\em the Jones condition}.   It was shown in \cite{Gehring_Osgood} that the domains
satisfying this condition are precisely the uniform domains introduced earlier by Martio and Sarvas in \cite{Martio}. Domains of this type
are also known as $(\epsilon,\infty)$ domains \cite{Jones2} or  $1$-sided NTA domains (see e.g.\ \cite{Toro}) as they satisfy the interior conditions
of the non-tangentially accessible domains of Jerison and Kenig \cite{Jerison_Kenig}.

Functions of vanishing mean oscillation, $\VMO$, where introduced by Sarason in \cite{Sarason}.
For a bounded domain $\Omega$, we say
\begin{equation}
\label{eqn-VMO}
f \in \VMO(\Omega) \iff \lim_{t \ra 0} \omega_{\Omega}(f, t) = 0,
\end{equation}
where
\begin{equation}
\label{eqn-modulus}
\omega_{\Omega}(f, t):= \sup_{Q \subset \Omega, \ell(Q) < t} \fint_Q |f(x) - f_Q| dx.
\end{equation}
Here $\ell(Q)$ is the sidelength of the cube $Q$ and we call $\omega_{\Omega}(f,\cdot)$ the {\em modulus of mean oscillation} (see \cite{BlascoPerez}).

On $\Rn$, $\VMO(\Rn)$ can refer to the closure of the uniformly continuous functions in $\BMO$, in which case
the characterization (\ref{eqn-VMO}) remains valid (this is Sarason's definition).  A smaller space which is also sometimes referred to as $\VMO(\Rn)$,
alternatively $\VMO_0(\Rn)$ or $\CMO(\Rn)$, is
the closure in $\BMO(\Rn)$ of continuous functions with compact support as defined by Coifman and Weiss in \cite{CoifmanWeiss}.  This requires
more vanishing mean oscillation conditions - see \cite{Uchiyama, Bourdaud}.  A nonhomogeneous version of this space, denoted by $\vmo(\Rn)$,
can be defined as the closure of $C_0(\Rn)$ in the {\em local} $\BMO$ space $\bmo(\Rn)$, defined by Goldberg \cite{Goldberg}.  In addition to
 (\ref{eqn-VMO}), $f \in \vmo(\Rn)$ must also satisfy a vanishing condition on its averages over large cubes going to infinity (see \cite{Bourdaud, Dafni}):
$$\lim_{R \ra \infty}  \sup_{Q \subset \Rn\setminus B(0,R), \ell(Q) \geq 1} \fint_Q |f(x)| dx = 0.$$

In view of Jones' extension theorem, a natural question is under what conditions on $\Omega$ can we extend functions in
$\VMO(\Omega)$ to functions in $\VMO(\Rn)$. In  \cite{italians1, italians2}, extension theorems were proved for another kind
of $\BMO$ and $\VMO$ spaces on $\Omega$, where the mean oscillation is controlled over cubes/balls which have centers in $\Omega$ but
are allowed to cross the boundary. A different question was answered by Holden in \cite{Holden}: given a bounded
measurable set $E$, what are the necessary and sufficient conditions on a function $f \in \Loneloc(E)$ to be the restriction to $E$ of
a function in $\VMO(\Rn)$.  This was the analogue of an unpublished result proved by Wolff for $\BMO$.

Brezis and Nirenberg in \cite{BN2} show that any function $f$ in $\VMO(\Omega)$ satisfying
a vanishing condition on averages over small cubes approaching $\bOmega$, namely
\begin{equation}
\label{eqn-VMOz}
\lim_{\ell(Q) = \dist(Q,\bOmega) \ra 0} \fint_Q |f(x)| dx = 0,
\end{equation}
has an extension to $\VMO(B)$
which is identically zero outside $\Omega$, where $B$ is any open ball containing $\Omegabar$ (in fact such an extension will also be
in $\VMO(\Rn)$ according to any of the definitions above).  When $\Omega$ is a bounded domain with Lipschitz boundary,
they show that  condition (\ref{eqn-VMOz}) is necessary for such an extension to exist.

The space of $\VMO(\Omega)$ functions which can be extended to zero outside $\Omega$ is denoted by $\VMO_0(\Omega)$
 in Brezis and Nirenberg \cite{BN2}, and was
denoted by $\VMO_z(\Omega)$ in previous work \cite{AuscherRuss, Chang}.  It is important to clarify, first of all,
that these functions are still considered modulo constants, so that
extending by zero is the same as extending by a constant.
Moreover, there are two possible choices of norms.  When looking at $\VMO_z(\Omega)$ as contained in $\BMO_z(\Omega)$,
the subspace consisting of those functions in $\BMO(\Rn)$ which vanish on $\Rn \setminus \Omegabar$, we have that
$\VMO_z(\Omega)$ is the closure of
$C_0(\Omega)$, the continuous functions with compact support in $\Omega$, with respect to the norm $\|\cdot\|_{\BMO(\Rn)}$,
where the supremum in (\ref{eqn-norm}) is taken over all cubes in $\Rn$.  On the other hand, when considered as a subset of
$\VMO(\Omega)$ with the norm $\|\cdot\|_{\BMO(\Omega)}$, where the supremum in (\ref{eqn-norm}) is taken over all cubes $Q \subset \Omega$,
$\VMO_z(\Omega)$ is not closed.  In fact, it is proved in \cite{BN2} (attributed to Jones)
that $C_0(\Omega)$ is dense in $\VMO(\Omega)$.  When $\Omega$ satisfies the Jones condition, one can consider $\BMO(\Omega)$
as the restrictions of
$\BMO(\Rn)$ functions to
$\Omega$ (sometimes denoted $\BMO_r(\Omega)$), which makes it a quotient of $\BMO(\Rn)$ modulo functions vanishing (or rather constant)
on $\Omega$, and therefore the norm
can be taken to be the quotient norm.  The simple one-dimensional example of $\log |x|$ on $\Omega = (0,\infty)$ illustrates this difference,
since it in $\BMO(\Omega)\setminus \BMO_z(\Omega)$.

In this paper we prove the analogue of Jones' extension result for $\VMO(\Omega)$.

\begin{theorem}  Let $\Omega$ be a bounded domain in $\Rn$, $n \geq 2$.
\label{thm1}

{\rm (i)} Assume $\Omega$ satisfies the Jones condition (\ref{eqn-JonesCond}).  Then for every $f \in \VMO(\Omega)$ there exists $F \in \VMO(\Rn)$ with $F = f$ on $\Omega$.
Moreover,  the map $f \to F$ is linear and bounded:
\begin{equation}
\label{eqn-extension}
\|F\|_{\BMO(\Rn)} \leq C \|f\|_{\BMO(\Omega)},
\end{equation}
where the constant $C$ is independent of $f$.

{\rm (ii)} Suppose there exists a linear extension map which takes each $f \in \VMO(\Omega)$ to a function $F \in \BMO(\Rn)$ with $F = f$ on $\Omega$ and such that
(\ref{eqn-extension}) holds.  Then $\Omega$ satisfies the Jones condition (\ref{eqn-JonesCond}).
\end{theorem}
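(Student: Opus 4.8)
For part~(i), I would not construct a new extension operator but simply show that Jones' operator already does the job. Recall from \cite{Jones} that for a uniform domain $\Omega$ there is a \emph{linear} map $E\colon\BMO(\Omega)\to\BMO(\Rn)$ with $Ef|_\Omega=f$ and $\|Ef\|_{\BMO(\Rn)}\leq C\|f\|_{\BMO(\Omega)}$; the goal is to prove $E\big(\VMO(\Omega)\big)\subset\VMO(\Rn)$, for then $F:=Ef$ does everything required. Since $\VMO(\Rn)$ in Sarason's sense is a closed subspace of $\BMO(\Rn)$ on which the characterization (\ref{eqn-VMO}) holds, and $E$ is bounded and linear, it is enough to check that $E$ maps some $\|\cdot\|_{\BMO(\Omega)}$-dense subset of $\VMO(\Omega)$ into $\VMO(\Rn)$: if $g_k\to f$ in $\BMO(\Omega)$ with $Eg_k\in\VMO(\Rn)$, then $Ef=\lim Eg_k\in\VMO(\Rn)$. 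For this dense subset I would take $C_0(\Omega)$, which is dense in $\VMO(\Omega)$ by the theorem of Jones quoted in the introduction (see \cite{BN2}).

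It then remains to show $Eg\in\VMO(\Rn)$ for $g\in C_0(\Omega)$, and for this I would prove the stronger fact that $Eg$ is uniformly continuous on $\Rn$, which by Sarason's characterization puts it in $\VMO(\Rn)$. Let $\delta_0=\dist(\supp g,\bOmega)>0$. In Jones' construction $Eg$ equals $g$ on $\Omega$ and, on $\Rn\setminus\Omegabar$, a locally finite sum $\sum_i g_{Q_i^{*}}\phi_i$, where $\{\phi_i\}$ is a smooth partition of unity adapted to a Whitney decomposition of $\Rn\setminus\Omegabar$ and $Q_i^{*}\subset\Omega$ is a reflected cube with $\ell(Q_i^{*})\asymp\ell(Q_i)\asymp\dist(Q_i,\bOmega)$ lying within distance $\lesssim\ell(Q_i)$ of $Q_i$. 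From this one reads off: (a) on $\{x\notin\Omegabar:\dist(x,\bOmega)<c\,\delta_0\}$ every reflected cube that contributes sits within $\lesssim\dist(x,\bOmega)<\delta_0$ of $\bOmega$, hence misses $\supp g$, so $Eg$ vanishes identically there; (b) consequently $Eg$ is continuous across $\bOmega$, vanishing on $\bOmega$ from both sides (from inside because $g$ does); (c) on $\{\dist(\cdot,\bOmega)\geq c\,\delta_0\}$ one has $|\nabla Eg|\lesssim\|g\|_\infty/\delta_0$, and since all sufficiently large Whitney cubes are reflected to a single fixed cube, $Eg$ is constant outside a large ball. A bounded continuous function on $\Rn$ that is constant near infinity and Lipschitz on the bounded transition region is uniformly continuous, giving $Eg\in\VMO(\Rn)$. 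The only point requiring care here is verifying (a)--(c) from the precise form of the construction in \cite{Jones}; each is a routine feature of a Whitney--reflection extension, but it does demand a look at the details.

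For part~(ii), the plan is to re-run the necessity half of Jones' theorem with all test functions taken inside $\VMO(\Omega)$. Assume $\Omega$ is not uniform. By the Gehring--Osgood description of uniform domains \cite{Gehring_Osgood}, the quasihyperbolic metric $k_\Omega$ then fails the bound $k_\Omega(x,y)\leq C\big(1+\log(1+|x-y|/\min(\dist(x,\bOmega),\dist(y,\bOmega)))\big)$; writing $j_\Omega(x,y)$ for that logarithm, there exist, for every $N$, points $a_N,b_N\in\Omega$ with $k_\Omega(a_N,b_N)\geq N\,(1+j_\Omega(a_N,b_N))$. Following Jones, from a quasihyperbolic near-geodesic joining $a_N$ to $b_N$ one builds a finite chain of Whitney-type cubes and a function $f_N=\sum_i c_i\theta_i$, where $\{\theta_i\}$ is a smooth partition of unity subordinate to the boundedly overlapping dilates of those cubes and the weights $c_i$ increase by $O(1)$ along the chain, in such a way that $\|f_N\|_{\BMO(\Omega)}\leq C_n$ (a dimensional constant, from Jones' estimate for chain functions) while $(f_N)_{B_a}-(f_N)_{B_b}\gtrsim k_\Omega(a_N,b_N)$ for balls $B_a,B_b$ centred at $a_N,b_N$ of radii comparable to $\dist(a_N,\bOmega),\dist(b_N,\bOmega)$. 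The one new observation is that, the chain being finite and the $\theta_i$ smooth, $f_N$ is Lipschitz on $\Omega$, hence $f_N\in\VMO(\Omega)$.

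Feeding $f_N$ into the hypothetical extension map gives $F_N\in\BMO(\Rn)$ with $F_N=f_N$ on $\Omega$ and $\|F_N\|_{\BMO(\Rn)}\leq C\|f_N\|_{\BMO(\Omega)}\leq C\,C_n$. On one hand $F_N$ inherits $(F_N)_{B_a}-(F_N)_{B_b}\gtrsim k_\Omega(a_N,b_N)$; on the other hand the classical $\BMO(\Rn)$ chaining estimate bounds $|(F_N)_{B_a}-(F_N)_{B_b}|$ by $C_n'\,\|F_N\|_{\BMO(\Rn)}\,(1+j_\Omega(a_N,b_N))$, since $B_a,B_b$ have radii $\asymp\dist(a_N,\bOmega),\dist(b_N,\bOmega)$ and centres at distance $|a_N-b_N|$. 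Combining these, $k_\Omega(a_N,b_N)\leq C''\,(1+j_\Omega(a_N,b_N))$ with $C''$ independent of $N$, contradicting the choice of $a_N,b_N$ once $N>C''$; hence $\Omega$ is uniform. The main obstacle in part~(ii) is exactly the construction of $f_N$ — the lower bound $(f_N)_{B_a}-(f_N)_{B_b}\gtrsim k_\Omega(a_N,b_N)$ together with a dimensional bound on $\|f_N\|_{\BMO(\Omega)}$ — which is the quantitative heart of Jones' necessity argument; the present statement adds only the remark that the extremal $f_N$ can be taken Lipschitz and therefore lies in $\VMO(\Omega)$, so that the obstruction survives the restriction of the extension problem to $\VMO$.
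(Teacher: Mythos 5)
Your proposal for part~(i) has a fundamental gap. You claim that ``Jones' operator already does the job'' and describe Jones' extension on $\Rn\setminus\Omegabar$ as a smooth partition-of-unity sum $\sum_i g_{Q_i^*}\phi_i$. That is not Jones' $\BMO$ extension. In \cite{Jones} the extension is the \emph{step function} defined by setting $\Lambda f$ equal to the constant $f_{S^*}$ on each exterior Whitney cube $S'$, where $S^*$ is a matching interior cube; you appear to be conflating this with the smooth Whitney--reflection extension of \cite{Jones2} for Sobolev spaces. The paper points this out explicitly in the introduction: ``Jones' original construction of the extension gives a step function which is not in $\VMO$.'' For $g\in C_0(\Omega)$ with nonconstant cube averages, $\Lambda g$ is a nontrivial step function on the exterior, and small cubes straddling the faces between adjacent exterior Whitney cubes see mean oscillation comparable to the jump size, which does not tend to zero with the scale. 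Hence $\Lambda g\notin\VMO(\Rn)$, your density argument cannot be run with Jones' actual operator, and the whole reduction to uniform continuity of $\Lambda g$ collapses. What you would actually need is a \emph{new} bounded linear extension operator whose step discontinuities have been smoothed out; constructing one and proving its $\BMO$-boundedness (Propositions~\ref{main_lemma_1}--\ref{main_lemma_3} in the paper, built on the Brezis--Nirenberg bump functions and moduli-of-continuity estimates) is precisely the content of the paper's proof of part~(i), not a ``routine feature'' that can be read off. The density reduction itself is an attractive shortcut, and indeed once one \emph{has} a bounded extension that sends $C_0(\Omega)$ into $\VMO(\Rn)$, your closure argument does give $E(\VMO(\Omega))\subset\VMO(\Rn)$; but that argument saves nothing on the boundedness side, which is where the work lives, and the operator you point to does not have the needed property.

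Your plan for part~(ii) is sound in outline and runs parallel to the paper's, differing chiefly in bookkeeping. The paper works directly with the Whitney chain distances $d_1,d_2$, builds the test function $\phi_{S_1,S_2}=\sum_i\lambda_i\psi^{\mu_i}_{S_i}$ out of the same bump functions as in part~(i), uses Proposition~\ref{main_lemma_1} to show $\|\phi_{S_1,S_2}\|_{\BMO(\Omega)}\leq C$ uniformly and $\phi_{S_1,S_2}\in\VMO(\Omega)$, and gets the contradiction from the oscillation between the concentric subcubes $J(S_1),J(S_2)$. You phrase the failure of uniformity via the quasihyperbolic metric of \cite{Gehring_Osgood}, which is equivalent, and propose a Lipschitz chain function $f_N=\sum_i c_i\theta_i$ whose membership in $\VMO(\Omega)$ is automatic from Lipschitz regularity. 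That observation is a valid alternative to the bump-function construction, and the contradiction via the $\BMO(\Rn)$ chaining estimate is the same as in the paper. Two points demand care: (a) your $f_N$ must be $\VMO$ on \emph{all} of $\Omega$, not just the chain region, so you need the partition of unity to taper $f_N$ continuously to zero off the chain; (b) the uniform bound $\|f_N\|_{\BMO(\Omega)}\leq C_n$ is the quantitative heart of Jones' necessity argument and you correctly flag it, but as stated it is assumed rather than proved. Subject to those, your part~(ii) would go through and is a genuine simplification over invoking Proposition~\ref{main_lemma_1} for the test function.
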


Since Jones' original construction of the extension gives a step function which is not in $\VMO$, the proof of part (i) involves a significant modification
of that construction which allows us to glue the pieces in a continuous way and ensure the vanishing mean oscillation condition across the boundary
$\bOmega$.  A crucial tool is the bump function constructed in \cite{BN2} in order to prove Jones' theorem on the approximation of $\VMO(\Omega)$ functions by $C^\infty_0(\Omega)$.
In Section~\ref{notation} we compute the modulus of mean oscillation for this bump function, as well as explain the Jones condition and other
properties of the domain and $\VMO$ functions.
In Section~\ref{sec-proof} we give the proof of Theorem~\ref{thm1} using some key propositions on how to glue $\VMO$ functions together.
Subsequent sections contain proofs of the propositions, which are in turn based on lemmas that are $\VMO$ analogues of the results of
Jones for $\BMO$, quantified using the modulus of mean oscillation.

\section{Notation and definitions}
\label{notation}
We will use the convention that constants may change from line to line in a series of inequalities, and that
they may depend on the dimension $n$, without pointing this out.  As usual, $A \approx B$ denotes the fact that
the ratio $A/B$ is bounded between two positive, finite constants.

\subsection{The domain}
\label{sec-domain}
We will always assume that $\Omega$ is a bounded domain, i.e.\ a bounded open and connected subset of $\Rn$.
In order to define the Jones condition, we need to fix a dyadic Whitney decomposition of $\Omega$, as in \cite{Jones}.
This means writing $\Omega$ as a countable union of dyadic cubes, which we denote by $S_j$, whose
interiors are
pairwise disjoint and whose sidelengths $\ell(S_j)$ are proportional to their distance from the complement
$\Omega^c$:
\begin{equation}
\label{eqn-distance}
\ell(S_j) \leq \dist(S_j, \Omega^c)\leq 4\sqrt{n}\; \ell(S_j).
\end{equation}
Assuming the cubes are closed, we say that $S_j$ and $S_k$ are {\em adjacent} or that they {\em touch} if
$j \neq k$ and $S_j \cap S_k \neq \emptyset$.
We also have (see \cite{Stein}, Proposition VI.1) that
\begin{equation}
\label{eqn-adjacent}
\frac 1 4  \;\ell(S_j) \leq \ell(S_k) \leq 4\ell(S_j) \quad \mbox{for adjacent }S_j, S_k.
\end{equation}
Following Jones, we denote this collection of Whitney cubes in $\Omega$ by $E$.

For $S_j,S_k \in E$, Jones defines a {\em Whitney chain} of length $m$ connecting $S_j,S_k$
to be a finite sequence of cubes in $E$, $\{Q_i\}_{i=0}^m$,
with $Q_0=S_j$, $Q_m=S_k$, and $Q_i$ touching $Q_{i+1}$ for $i=0,\dots ,m-1$. Such a chain
must always exist since $S_j, S_k$ are connected by a path which lies at a positive distance
from $\bOmega$ and therefore passes through a finite number of Whitney cubes of sidelength bounded below,
by (\ref{eqn-distance}).

We now define the two distance functions involved in the Jones condition.
\begin{definition}
\label{def-d1}
For Whitney cubes $S_j,S_k \in E$, we define  $d_1(S_j,S_k)$ to be the length of a shortest Whitney chain
connecting them.
\end{definition}

Throughout the paper, unless otherwise specified, we use $d(\cdot,\cdot)$ to denote the usual
Euclidean distance between points,
as well as the distance between points and sets, or between
two sets.  We will denote by $\log a$, $a > 0$, the logarithm to the base $2$ (instead
of the natural logarithm).

\begin{definition}
For any cubes $Q_1,Q_2$, we define the distance function $d_2(Q_1,Q_2)$ by
$$
d_2(Q_1,Q_2) := \left| \log \frac{\ell(Q_1)}{\ell(Q_2)} \right| + \log \left( 2 + \frac{d(Q_1,Q_2)}{\ell(Q_1) + \ell(Q_2)}\right).
$$
\end{definition}

Finally, we are ready to define the Jones condition.

\begin{definition}
\label{def-Jones}
We say that $\Omega$ satisfies the Jones condition if there exists a constant $\kappa > 0$ such that
\begin{equation}
\label{eqn-JonesCond}
d_1(Q_1,Q_2) \leq \kappa d_2(Q_1,Q_2) \quad \forall Q_1, Q_2 \in E.
\end{equation}
\end{definition}

A large class of domains satisfy the Jones condition.  An example of a domain that fails to satisfy
it is a slit disk: $\{r e^{i \theta}: r\in[0,1), \theta\in(0,2\pi)\}$.

In addition to the Whitney decomposition of $\Omega$, we will
use $E'$ to denote the collection of cubes in the Whitney decomposition of $\Omegabar^c$.
Since $\Omega$ is bounded, we have
\begin{equation}
\label{eqn-L}
L := \max_{S \in E} \ell(S) < \infty.
\end{equation}
For every $S'\in E'$ such that $\ell(S')\leq L$, there exists $S \in E$ with $\ell(S)\geq \ell(S')$.  We
say such a cube $S$ is a {\em matching cube} to $S'$ if it is nearest to $S'$ (in Euclidean distance).
There may be several choices for $S$.  As pointed out in \cite{Jones},  if $S$ is a matching cubes of $S'$, then
\begin{equation}
\label{matching}
\ell(S')\leq \ell(S)\leq 2 \ell(S')
\end{equation}
(otherwise $\ell(S)\geq 4 \ell(S')$ so by (\ref{eqn-adjacent}) its neighbors will have sidelength at least $\ell(S)$
and one of them will be closer to $S'$).

We denote by $\Omega'$ the union of all $S'$ in $E'$ which have matching cubes, i.e.\
$$\Omega' : = \bigcup\{S' \in E': \ell(S')\leq L\}.$$
Let
\begin{equation}
\label{eqn-Omegatil}
 \Omegatil := (\Omegabar \cup \Omega')^{\rm o},
\end{equation}
where $X^{\rm o}$ denotes the interior of the set $X$.  We will use this set for the extension in the proof of
Theorem~\ref{thm1}, so we would like to understand it well.
 By property (\ref{eqn-distance}) of the Whitney decomposition, any
 point in $\Omega'$ lies
 within distance $5\sqrt{n}L$ of $\Omegabar$, so we can take an open ball $\Btil$ with $\diam(\Btil) \approx L$ such that
$\Btil \supset \Omegabar \cup \Omega'$.
Moreover, if we take an open neighborhood of $\Omegabar$,
$$V = \{x \in \Rn: d(x, \Omegabar) < L/4\},$$
then by property  (\ref{eqn-distance}) of the Whitney decomposition,
for $S' \in E'$ with $S' \cap \overline{V} \neq \emptyset$ we must have
$\ell(S') \leq L/4$.
By property (\ref{eqn-adjacent}), any Whitney cube in $E'$ which is adjacent to such an
$S'$ must have length no larger than $L$.
Thus the layer of cubes in $E'$ covering $\partial V$ is surrounded
by another layer of Whitney cubes lying in
$\Omega' \setminus \overline{V}$, i.e.
$$\Omegabar \subset V \subset \overline{V} \subset \Omegatil \subset \Omegabar \cup \Omega' \subset \Btil.$$
The boundary $\partial \Omegatil$, lying in $\Btil\setminus\overline{V}$, is a piecewise flat surface consisting of
faces of the finitely many
cubes $S' \in E'$, $S' \subset \Omega'$, which have an adjacent cube of sidelength
greater than $L$, meaning, by \eqref{eqn-adjacent}, that
\begin{equation}
\label{eqn-OmegatilBoundary}
S' \subset \Omega' \mbox{ and } S' \cap \partial \Omegatil \neq \emptyset \implies \frac L 2 \leq \ell(S') \leq L.
\end{equation}

\subsection{The modulus of mean oscillation}
From  Theorem A1.1 in \cite{BN2} (due to Peter Jones) and the equivalence of the $\ell^2$ and $\ell^\infty$ metrics (i.e.
balls and cubes) in $\Rn$, we can replace the
definition (\ref{eqn-modulus}) of the modulus of mean oscillation of a function $f \in \Loneloc(\Omega)$ by the following equivalent form:
\begin{equation}
\label{eqn-newmodulus}
\omega_\Omega(f,t) :=
\sup_{Q \in \I(\Omega), \ell(Q) < t} \frac{1}{|Q|^2} \int_{Q} \int_{Q} |f(x)-f(y)| dx dy,
\end{equation}
where we use $\I(\Omega)$ to denote the collection of {\em interior} cubes in $\Omega$, namely those cubes
$$\I(\Omega):= \{Q \subset \Omega \; |\: 0 < \diam(Q) \leq d(Q,\bOmega)\}$$
(what is denoted in  \cite{BN2} by ${\mathcal C}_{1/2} = {\mathcal C}$).
Here we can interpret the diameter and the distance in either the $\ell^2$ or the $\ell^\infty$ metric.
Note that $\I(\Omega)$ consists of exactly those cubes for which $2Q \subset \Omega$, where $2Q$ denotes the cube with the same
center as $Q$ and twice the sidelength.

It is noted at the beginning of Appendix 1 in \cite{BN2} that these results are valid in any bounded open
set $\Omega$.
In \cite{Buckley}, the equivalence of the definition of $\BMO(\cO)$ for an open set $\cO$  by
(\ref{eqn-norm}), or by replacing $Q \subset \cO$ in the supremum with $2Q \subset \cO$,
 is attributed to \cite{ReimannRychener}.

\begin{remark}
\label{rmk-opensets}
In what follows we will also want to refer to $f \in \VMO(\cO)$ for an open set that is not necessarily connected.  We will take
this to mean that $\omega_\cO(f,t)$, defined as in (\ref{eqn-newmodulus}), is bounded and goes to zero as $t \ra 0$.
Equivalently, $f$ is in $\VMO$ on every connected component of $\cO$.
Note that the zero elements in this case are no longer constants but functions which are constant on each connected
component of $\cO$.
\end{remark}

\begin{remark}
\label{rmk-interior}
Since $\I(\Omega) = \I(\Omegabar)$, in (\ref{eqn-newmodulus}) we can replace $\Omega$ by $\Omegabar$ without changing anything.  Thus,
with an abuse of notation, we will often write below $\omega_S(f,t)$ where $S$ is a cube, which we have assumed
previously to be a closed set, when we really mean $S^0$, the interior of $S$.  Similarly, we will also write
$\VMO(S)$ for $\VMO(S^0)$.
\end{remark}

We also note that $\omega_\Omega(f,t)$ is invariant under dilations.  That is,  for  $\lambda > 0$, if we consider the image of
$\Omega$ under the dilation $x \ra \lambda x$, denoted by
$\lambda\Omega$, and the function $f_\lambda$ defined on $\lambda\Omega$ by $f_\lambda(x) = f(\lambda^{-1} x)$, we have
\begin{equation}
\label{eqn-dilation}
\omega_{\lambda\Omega}(f_\lambda,\lambda t) = \omega_\Omega(f,t).
\end{equation}
Similarly we have invariance under translations $x \ra x + c$.

Finally, we will need the fact that vertical truncations reduce the modulus of mean oscillation.  By this we mean that if $f \in \Loneloc(\Omega)$
and $m, M \in \R$, then
\begin{equation}
\label{eqn-truncation}
\omega_\Omega(\max\{f,m\} ,t) \leq \omega_\Omega(f,t) \; \mbox{and }\omega_\Omega(\min\{f,M\} ,t) \leq \omega_\Omega(f,t),
\end{equation}
since the truncation reduces the differences in the integrand on the right-hand-side of (\ref{eqn-newmodulus}).

The modulus of mean oscillation of a $\VMO$ function is an example of a modulus of continuity.
\begin{definition}
\label{def-moc}
A bounded, nondecreasing function $\eta: [0,\infty) \ra [0,\infty)$ which is continuous at $0$, with $\eta(0)=0$,
will be called a modulus of continuity.
\end{definition}
\begin{remark}
\label{rmk-moc}
While natural moduli of continuity associated with measuring the smoothness of functions, such as the modulus of mean oscillation, are nondecreasing,
we can control any $\eta$ which is not necessarily nondecreasing by a nondecreasing modulus of continuity defined
as $\tilde{\eta}(t) = \displaystyle{\sup_{s \leq t}\eta(s)}$, without changing the $L^\infty$ norm.
\end{remark}

In the following we will need the notion of the {\em least concave majorant} of a nonnegative function $\eta$,
 which is  a concave function $\etabar \geq \eta$ such that for every
concave $\theta \geq \eta$, $\etabar \leq \theta$.  As in  \cite{DevoreLorentz} (see p.\ 43), we can define such a
function by taking
$$\etabar(t) = \inf_{\mbox{\scriptsize line}\; l \geq \eta} l(t).$$
If $\eta :[0,\infty) \ra [0,\infty)$ is nondecreasing, any line  dominating $\eta$ must have nonnegative slope,
and therefore the least concave majorant $\etabar$ must be nondecreasing.  Moreover, if $\eta$ is bounded by $M$, taking $l$ to be the constant
function $M$, we have that $\etabar$ is bounded
by $M$.  Thus $\|\etabar\|_\infty \leq \|\eta\|_\infty$.  Finally, if $\phi$ is continuous at $0$, with $\eta(0)=0$, so is $\etabar$
(see p.\ 43 in \cite{DevoreLorentz}).  Thus we have the following (note that this is not Lemma 6.1 on p.\ 43 in \cite{DevoreLorentz}
since there a modulus of continuity is assumed to be subadditive):

\begin{lem}
\label{lem-lcm}
The least concave majorant of a modulus of continuity is also a modulus of continuity and has the same supremum.
\end{lem}

\subsection{Properties of $\VMO$ functions}

In order to work with $\VMO$ instead of $\BMO$, we present ``quantified" versions of some lemmas in \cite{Jones}, where
instead of using the $\BMO$ norm we use the modulus of mean oscillation.

\begin{lem}
\label{lem_d1}
Let $\Omega$ be a domain, $\phi\in \VMO(\Omega)$  and $Q_1,Q_2$ be Whitney cubes of $\Omega$. Then for some $C>0$
$$
|\phi_{Q_1} - \phi_{Q_2}| \lC  d_1(Q_1,Q_2) \cdot \omega_{\Omega}(\phi,\ell(Q)),
$$
where $Q$ is a largest cube in a shortest Whitney chain connecting $Q_1$ and $Q_2$ and
$$
\ell(Q) \leq 4^{d_1(Q_1,Q_2)} \ell(Q_1)
$$
\end{lem}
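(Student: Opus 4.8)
The plan is to telescope the difference $\phi_{Q_1} - \phi_{Q_2}$ along a shortest Whitney chain $\{R_i\}_{i=0}^m$, $R_0 = Q_1$, $R_m = Q_2$, where $m = d_1(Q_1,Q_2)$, writing
\begin{equation*}
|\phi_{Q_1} - \phi_{Q_2}| \le \sum_{i=0}^{m-1} |\phi_{R_i} - \phi_{R_{i+1}}|.
\end{equation*}
For each consecutive pair, $R_i$ and $R_{i+1}$ touch, so by the Whitney property \eqref{eqn-adjacent} their sidelengths are comparable, $\tfrac14 \ell(R_i) \le \ell(R_{i+1}) \le 4\ell(R_i)$. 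I would bound $|\phi_{R_i} - \phi_{R_{i+1}}|$ by passing through a cube $\tilde R_i$ that contains both $R_i$ and $R_{i+1}$ and lies in $\I(\Omega)$: since the two cubes touch and have comparable sidelengths, one can find such a cube with $\ell(\tilde R_i) \approx \ell(R_i)$ and $2\tilde R_i \subset \Omega$ (using \eqref{eqn-distance}, which gives room of order $\ell(R_i)$ around each Whitney cube before hitting $\Omega^c$). Then by the standard averaging inequality,
\begin{equation*}
|\phi_{R_i} - \phi_{\tilde R_i}| \le \frac{|\tilde R_i|}{|R_i|} \fint_{\tilde R_i} |\phi - \phi_{\tilde R_i}| \le C\, \omega_\Omega(\phi, \ell(\tilde R_i) + \epsilon)
\end{equation*}
for any $\epsilon > 0$ (or directly $\le C\,\omega_\Omega(\phi,\ell(\tilde R_i))$ if the supremum in \eqref{eqn-modulus} is over $\ell(Q) \le t$; since we only need comparability and $\omega$ is nondecreasing, this is harmless), and likewise for $R_{i+1}$. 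Hence $|\phi_{R_i} - \phi_{R_{i+1}}| \le C\, \omega_\Omega(\phi, c\,\ell(R_i))$ for a dimensional constant $c$.

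Next I would replace each $\omega_\Omega(\phi, c\,\ell(R_i))$ by $\omega_\Omega(\phi, \ell(Q))$, where $Q$ is the largest cube in the chain: since $\omega_\Omega(\phi,\cdot)$ is nondecreasing, and every $\ell(R_i) \le \ell(Q)$, and $c\,\ell(R_i) \le c\,\ell(Q)$, we get $\omega_\Omega(\phi, c\,\ell(R_i)) \le \omega_\Omega(\phi, c\,\ell(Q)) \le C\,\omega_\Omega(\phi,\ell(Q))$ after chaining through a bounded number of doublings of the argument (each doubling costs a constant factor by the same averaging argument applied to a cube and its double — or one simply absorbs the factor $c$ into the constant, noting that $\omega(\phi,c\,t)\le \lceil\log c\rceil\,\omega(\phi,t)$ type bounds follow from subadditivity-like estimates for mean oscillation). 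Summing over $i = 0,\dots,m-1$ then yields
\begin{equation*}
|\phi_{Q_1} - \phi_{Q_2}| \le C\, m\, \omega_\Omega(\phi,\ell(Q)) = C\, d_1(Q_1,Q_2)\, \omega_\Omega(\phi,\ell(Q)),
\end{equation*}
which is the first assertion.

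For the size bound on $\ell(Q)$: along the chain each step changes the sidelength by at most a factor $4$ by \eqref{eqn-adjacent}, so $\ell(R_i) \le 4^i \ell(Q_1) \le 4^m \ell(Q_1) = 4^{d_1(Q_1,Q_2)}\ell(Q_1)$ for every $i$, and in particular for the index realizing the maximum, giving $\ell(Q) \le 4^{d_1(Q_1,Q_2)}\ell(Q_1)$. The main obstacle I anticipate is purely bookkeeping: making the choice of the enveloping cube $\tilde R_i$ precise — one needs to verify that a cube containing two touching Whitney cubes of comparable size is itself an interior cube (lies in $\I(\Omega)$) after at most a fixed dilation, which requires carefully tracking the constants in \eqref{eqn-distance} and \eqref{eqn-adjacent}. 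Everything else is a routine telescoping argument, with monotonicity of $\omega_\Omega(\phi,\cdot)$ doing the work that the triangle inequality for the $\BMO$ norm does in Jones' original lemma.
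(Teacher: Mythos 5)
Your overall plan — telescoping $\phi_{Q_1}-\phi_{Q_2}$ along a shortest Whitney chain and bounding each consecutive step by a single evaluation of $\omega_\Omega(\phi,\cdot)$ — is exactly the right idea, and it is what the paper does (by reference to Jones' Lemma~2.2; the paper omits the proof). The gap is precisely the step you yourself flag: the choice of the bridging cube. You take $\tilde R_i$ to be a cube \emph{containing} both $R_i$ and $R_{i+1}$, and assert that $2\tilde R_i\subset\Omega$ ``using \eqref{eqn-distance}.'' But any cube containing two adjacent Whitney cubes necessarily extends beyond their union by an amount of order $\ell(R_i)$ in the $n-1$ transverse directions, so its Euclidean distance to $R_i\cup R_{i+1}$ is of order $\sqrt{n}\,\ell(R_i)$; with the stated lower bound $\dist(R_i,\Omega^c)\ge\ell(R_i)$ this does not even guarantee $\tilde R_i\subset\Omega$ once $n$ is moderately large, let alone $2\tilde R_i\subset\Omega$. (It does work if one invokes the sharper Whitney bound $\dist\ge\sqrt{n}\,\ell$ from Stein, but that is not what \eqref{eqn-distance} gives you, and in any case $\tilde R_i\in\I(\Omega)$ fails.) Jones' construction, which the paper reuses verbatim in the proof of Lemma~\ref{lem_gluing_dyadic}, avoids this by \emph{shrinking} rather than enlarging: pick subcubes $R_i'\subset R_i$, $R_{i+1}'\subset R_{i+1}$ of half the sidelength sitting against the shared face, and a small cube $Q'$ with $R_i'\cup R_{i+1}'\subset Q'\subset R_i\cup R_{i+1}$. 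Then $Q'$ lies inside $\Omega$ for free and one telescopes through five averages, $\phi_{R_i}\to\phi_{R_i'}\to\phi_{Q'}\to\phi_{R_{i+1}'}\to\phi_{R_{i+1}}$, each pair comparable in measure.

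Two smaller points. First, your enveloping cube has sidelength strictly larger than $\max(\ell(R_i),\ell(R_{i+1}))$, so even granting containment you would land on $\omega_\Omega(\phi,c\,\ell(Q))$ for some $c>1$, not $\omega_\Omega(\phi,\ell(Q))$ as the lemma states; the subcube construction keeps every cube used of sidelength at most $\max(\ell(R_i),\ell(R_{i+1}))\le\ell(Q)$, which is why the clean argument $\ell(Q)$ appears. (This matters because the lemma is later applied with the argument of $\omega_\Omega$ tracked precisely, e.g.\ in Lemma~\ref{lem_log_improvment_easy_version}.) Second, your parenthetical about ``$\omega(\phi,ct)\le\lceil\log c\rceil\,\omega(\phi,t)$ type bounds'' is unnecessary and not quite what you want; monotonicity of $\omega_\Omega$ is all that is used once the cubes are chosen with the right sidelengths. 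Your derivation of $\ell(Q)\le 4^{d_1(Q_1,Q_2)}\ell(Q_1)$ from \eqref{eqn-adjacent} is correct.
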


\begin{lem}
\label{lem_d2_proper}
There exists $C>0$ such that for any $\phi\in \VMO(\Omega)$ and $Q_2\subset Q_1 \subset \Omega$,
$$
 |\phi_{Q_1} - \phi_{Q_2}| \leq C \log\left(2+\frac{\ell(Q_1)}{\ell(Q_2)}\right)
\omega_\Omega(\phi,\ell(Q_1)).
$$
\end{lem}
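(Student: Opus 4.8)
The plan is to prove the estimate by building a short chain of cubes
$$Q_2 = P_0 \subset P_1 \subset \cdots \subset P_N = Q_1$$
whose sidelengths roughly double at each step, and then telescoping the averages of $\phi$ along it.

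First I would record the elementary observation that for any two cubes $P \subset P'$,
$$|\phi_P - \phi_{P'}| = \left| \fint_P (\phi - \phi_{P'}) \right| \leq \frac{1}{|P|}\int_{P'} |\phi - \phi_{P'}| = \frac{|P'|}{|P|}\,\fint_{P'}|\phi - \phi_{P'}|.$$
Consequently, if in addition $|P'| \leq 2^n|P|$, $P' \subset \Omega$, and $\ell(P') \leq \ell(Q_1)$, then $|\phi_P - \phi_{P'}| \leq 2^n\,\omega_\Omega(\phi,\ell(Q_1))$, the only caveat being that for the top cube $P_N = Q_1$, whose sidelength equals $\ell(Q_1)$, one must appeal to $\omega_\Omega(\phi,\cdot)$ at a value slightly larger than $\ell(Q_1)$ because of the strict inequality $\ell(Q) < t$ in its definition; this is cosmetic and I will not dwell on it.

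The geometric core is the construction of the chain. Set $P_0 = Q_2$; given $P_j = \prod_{i=1}^n [a_i, a_i+s] \subset Q_1 = \prod_{i=1}^n[b_i, b_i+S]$ with $s < S$, put $t = \min\{2s, S\}$ and look for $P_{j+1} = \prod_{i=1}^n [c_i, c_i+t]$ with $P_j \subset P_{j+1} \subset Q_1$. Coordinatewise this requires $c_i$ to lie in $[\,\max\{b_i,\, a_i+s-t\},\ \min\{a_i,\, b_i+S-t\}\,]$, an interval that is nonempty since $b_i \leq a_i$ and $a_i + s \leq b_i + S$ (both from $P_j \subset Q_1$) while $s \leq t \leq S$. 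Thus $P_{j+1}$ exists, with $P_{j+1} \subset Q_1 \subset \Omega$, $\ell(P_{j+1}) = \min\{2\ell(P_j), \ell(Q_1)\} \leq \ell(Q_1)$, and $|P_{j+1}| \leq 2^n|P_j|$. Since the sidelengths at least double at each step until they reach $\ell(Q_1)$, the chain terminates at $P_N = Q_1$ with $N = \lceil \log(\ell(Q_1)/\ell(Q_2))\rceil \leq C\log\!\left(2 + \ell(Q_1)/\ell(Q_2)\right)$.

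Finally I would telescope: applying the first step to each consecutive pair $P_j \subset P_{j+1}$,
$$|\phi_{Q_1} - \phi_{Q_2}| \leq \sum_{j=0}^{N-1}|\phi_{P_{j+1}} - \phi_{P_j}| \leq 2^n N\, \omega_\Omega(\phi,\ell(Q_1)) \leq C \log\!\left(2 + \frac{\ell(Q_1)}{\ell(Q_2)}\right)\omega_\Omega(\phi,\ell(Q_1)),$$
the case $\ell(Q_1) = \ell(Q_2)$ being trivial since then $N = 0$. The main (still mild) obstacle is the chain construction — checking that a cube can always be enlarged by a fixed factor while staying inside $Q_1$, and counting the steps — together with the purely cosmetic strict/non-strict issue in the definition of $\omega_\Omega$ for the single top cube; all remaining estimates are routine.
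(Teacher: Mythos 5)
Correct, and essentially the route the paper intends: the paper omits the proof, citing Lemma~2.1 of Jones, whose classical argument is precisely your doubling chain of nested cubes followed by telescoping of averages, and your one added observation---that every chain cube has sidelength at most $\ell(Q_1)$, so the $\BMO$ seminorm may be replaced by $\omega_\Omega(\phi,\ell(Q_1))$---is exactly the quantification the statement calls for. The strict-versus-nonstrict issue you flag for the top cube $P_N = Q_1$ is the only real friction with the paper's definition of $\omega_\Omega$ in \eqref{eqn-modulus}, but the same slack already appears implicitly in Lemma~\ref{lem_d1} and elsewhere in the paper, so you are right to regard it as cosmetic rather than a gap.
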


The proofs of these results are almost the same as of Lemma 2.2 and Lemma 2.1 in \cite{Jones}, respectively, so we omit them.
We will also need a version of Lemma 2.3 in \cite{Jones} which is quantified using the notion of moduli of continuity.

\begin{lem}
\label{lem_main_one}
Let $\cO$ be an open set in $\Rn$ and $\phi$ be a locally integrable function on $\cO$. If there exist moduli of continuity $\delta_1,\delta_2:[0,\infty)\to [0,\infty)$ such that for all $s>0$
\begin{equation}
\label{temp_lem_key_1}
\sup_{\substack{\text{dyadic } Q\subset \cO \\ \ell(Q)\leq s}}\fint_Q |\phi(x) - \phi_Q| dx \leq \delta_1(s)
\end{equation}
and
\begin{equation}
\label{temp_lem_key_2}
\sup_{\substack{\text{adjacent dyadic } Q_1,Q_2\subset \cO \\ \ell(Q_1)=\ell(Q_2)\leq s}} |\phi_{Q_1} - \phi_{Q_2} | \leq \delta_2(s),
\end{equation}
then $\phi\in \VMO(\cO)$ and
$\omega_{\cO}(\phi)(s) \leq C (\delta_1(s)+\delta_2(s))$, for some constant $C>0$.
\end{lem}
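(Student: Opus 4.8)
The plan is to bound the double-integral modulus $\omega_\cO(\phi,s)$ from \eqref{eqn-newmodulus} by reducing an arbitrary interior cube to dyadic cubes on which hypotheses \eqref{temp_lem_key_1} and \eqref{temp_lem_key_2} apply. First I would fix an interior cube $Q\in\I(\cO)$ with $\ell(Q)\le s$, and observe that
$$\frac{1}{|Q|^2}\int_Q\int_Q |\phi(x)-\phi(y)|\,dx\,dy \leq \frac{2}{|Q|}\int_Q |\phi(x)-\phi_Q|\,dx + \big|\,\textrm{(discrepancy of averages)}\,\big|,$$
so it suffices to control $\fint_Q|\phi-\phi_Q|$ up to a constant, via some auxiliary average. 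The standard trick (this is exactly how Lemma 2.3 in \cite{Jones} goes) is to tile $Q$, up to a set of small relative measure, by a bounded number of dyadic cubes of comparable sidelength: choose $k$ so that $2^{-k}\ell(Q)$ lands in a fixed range, and cover $Q$ by the dyadic cubes of that scale that it meets. Because $2Q\subset\cO$ (the defining property of $\I(\cO)$ noted after \eqref{eqn-newmodulus}), one can arrange that all these dyadic cubes lie in $\cO$; there are at most $C_n$ of them, and any two can be joined by a chain of at most $C_n$ steps through adjacent dyadic cubes of equal sidelength $\le s$.

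The main estimate then splits into three contributions. For each dyadic piece $R$ in the tiling, $\fint_R|\phi-\phi_R|\le\delta_1(s)$ by \eqref{temp_lem_key_1}. The averages $\phi_R$ over these pieces are mutually close: walking along the adjacency chain and applying \eqref{temp_lem_key_2} at each step gives $|\phi_R-\phi_{R'}|\le C_n\,\delta_2(s)$ for any two pieces $R,R'$ in the tiling. Finally one compares $\phi_Q$ to any one of the $\phi_R$: writing $\phi_Q$ as a weighted average of the $\phi_R$ plus an error term coming from the part of $Q$ not covered by full dyadic cubes. Here I would be slightly careful: rather than tiling $Q$ exactly, pick the single dyadic cube $R_0$ of the chosen scale with $R_0\subset Q$ (or the dyadic cube containing $Q$'s center at a scale just below $\ell(Q)$), note $|R_0|\approx|Q|$, and estimate
$$|\phi_Q-\phi_{R_0}| \leq \frac{1}{|Q|}\int_Q|\phi-\phi_{R_0}| \leq \frac{1}{|Q|}\int_Q|\phi-\phi_Q| + |\phi_Q-\phi_{R_0}|,$$
which is circular — so instead I bound $\frac1{|Q|}\int_Q|\phi-\phi_{R_0}|$ by covering $Q$ with the $C_n$ dyadic cubes $R_i$ of that scale meeting $\overline Q$, using $\frac1{|R_i|}\int_{R_i}|\phi-\phi_{R_0}|\le\frac1{|R_i|}\int_{R_i}|\phi-\phi_{R_i}|+|\phi_{R_i}-\phi_{R_0}|\le\delta_1(s)+C_n\delta_2(s)$, and summing (legitimate since $\sum|R_i|\approx|Q|$). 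Combining, $\fint_Q|\phi-\phi_Q|\le\fint_Q|\phi-\phi_{R_0}|+|\phi_{R_0}-\phi_Q|\le C(\delta_1(s)+\delta_2(s))$, hence $\omega_\cO(\phi,s)\le C(\delta_1(s)+\delta_2(s))$.

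The point where care is genuinely needed — and the only real obstacle — is the \emph{dyadic alignment}: an arbitrary cube $Q\subset\cO$ need not sit nicely over the dyadic grid, so one must choose the intermediate dyadic scale and the covering family $\{R_i\}$ so that (a) every $R_i\subset\cO$, which is where $2Q\subset\cO$ is used, (b) the number of $R_i$ and the chain length between any two of them are bounded by a dimensional constant, and (c) $\bigcup R_i\supset\overline Q$ with $\sum|R_i|\approx|Q|$. Once the right scale ($2^{-k}\ell(Q)$ comparable to a fixed multiple of $\ell(Q)$, say within $[\ell(Q)/4,\ell(Q)/2)$ so each $R_i$ has sidelength $\le\ell(Q)\le s$) is fixed, all three requirements are routine geometry of the dyadic grid. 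That $\phi\in\VMO(\cO)$ follows immediately: $\delta_1,\delta_2$ are moduli of continuity, so $C(\delta_1(s)+\delta_2(s))\to0$ as $s\to0$, forcing $\omega_\cO(\phi,s)\to0$; boundedness of $\omega_\cO(\phi,\cdot)$ follows from boundedness of $\delta_1,\delta_2$.
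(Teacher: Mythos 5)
Your argument is correct, but it takes a genuinely different route from the paper's. The paper (following Jones' proof of Lemma 2.3) tiles $Q$ by its own \emph{Whitney decomposition} into dyadic subcubes: these are all contained in $Q$, but range over infinitely many scales, so the estimate becomes a convergent geometric series in which the measure of cubes at scale $2^{-k}\ell(Q)$ decays like $2^{-k}|Q|$ while the chaining cost grows linearly in $k$. The advantage of that approach is that it never leaves $Q$, and so proves the oscillation bound for \emph{every} cube $Q\subset\cO$, not only interior ones. Your approach instead fixes a single intermediate dyadic scale and covers $\overline Q$ by the bounded family of dyadic cubes of that scale which meet it; this is more elementary (a finite sum, no geometric series), but it requires these cubes to lie in $\cO$, which you secure by $2Q\subset\cO$, i.e.\ by restricting to $Q\in\I(\cO)$. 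That restriction is harmless here because the paper's modulus $\omega_\cO$ is defined by the supremum over $\I(\cO)$ (via the equivalence from \cite{BN2}, Theorem A1.1). Two small remarks: your first display is unnecessary and slightly off---the standard bound is simply $\frac{1}{|Q|^2}\int_Q\int_Q|\phi(x)-\phi(y)|\,dx\,dy\le\frac{2}{|Q|}\int_Q|\phi-\phi_Q|$, with no extra ``discrepancy'' term; and you correctly spot and repair the circularity in comparing $\phi_Q$ to $\phi_{R_0}$, by instead bounding $\fint_Q|\phi-\phi_{R_0}|$ directly through the covering family and then using $\fint_Q|\phi-\phi_Q|\le 2\fint_Q|\phi-\phi_{R_0}|$.
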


\begin{proof}
In the proof of Lemma 2.3 in \cite{Jones}, we consider a cube $Q \subset \cO$ with $\ell(Q) \leq s$.
Noting that all the dyadic cubes involved in the proof will also have sidelength
less than $s$ and are contained in $Q$ (since they consist of the Whitney decomposition of $Q$), and replacing the
constants $c_i$ by $\delta_i(s)$, $i = 1,2$, we get that $\fint_Q |\phi(x) - \phi_Q| dx \leq C (\delta_1(s)+\delta_2(s))$.
\end{proof}

A corollary of this result is the following  lemma, which is used in the proof of Theorem~\ref{thm1} but is also of independent interest.
It allows us to ``glue together" $\VMO$ functions defined on cubes.

\begin{lem}
\label{lem_gluing_dyadic}
Let $\{S_i\}$ be a countable collection of dyadic cubes with disjoint interiors, and  let $\phi_{i}$ be a collection of  functions with $\phi_i \in \VMO(S_i)$.
If there exist moduli of continuity $\eta_1,\eta_2$ such that for all $i$
\begin{equation}
\label{cond_gluing_1}
\omega_{S_i}(\phi_i,t) \leq \eta_1(t),\ \forall t\geq 0
\end{equation}
and for any two adjacent cubes $S_i,S_j$
\begin{equation}
\label{cond_gluing_2}
\sup_{\substack{\text{adjacent dyadic } Q_1\subset S_i,Q_2\subset S_j \\ \ell(Q_1)=\ell(Q_2)\leq t}} |(\phi_i)_{Q_1} - (\phi_j)_{Q_2} |
\leq \eta_2(t),
\end{equation}
 then the function $\phi$ defined on $\bigcup S_i$ by
$$
\phi(x)  = \phi_i(x),   \quad x\in S_i
$$
is a element of $\VMO(\cO)$, where $\cO$ is the interior of $\bigcup S_i$, with
$$
\omega_{\cO}(\phi,t) \leq C\max(\eta_1(t), \eta_2(t))).
$$
\end{lem}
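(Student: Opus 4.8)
The plan is to deduce the statement from Lemma~\ref{lem_main_one} applied to $\phi$ on the open set $\cO$. Thus it suffices to produce moduli of continuity $\delta_1,\delta_2$, both of the form $C\max(\eta_1,\eta_2)(Cs)$ (still moduli of continuity, and with controlled supremum; one may pass to least concave majorants via Lemma~\ref{lem-lcm} to absorb the dilation if desired), for which the hypotheses \eqref{temp_lem_key_1} and \eqref{temp_lem_key_2} hold for all dyadic cubes contained in $\cO$; Lemma~\ref{lem_main_one} then gives $\phi\in\VMO(\cO)$ with $\omega_\cO(\phi,t)\leq C\max(\eta_1,\eta_2)(t)$. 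The organizing observation is that, since the $S_i$ and all test cubes are dyadic, every dyadic cube $Q\subseteq\cO$ either \emph{(a)} is contained in a single $S_i$, or \emph{(b)} is tiled, up to a null set, by the subfamily $\{S_i:S_i\subseteq Q\}$, each member of which then has $\ell(S_i)\leq\ell(Q)/2$; I call these the \emph{interior} and \emph{straddling} cases.

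In the interior case $Q\subseteq S_i$ one has $\phi=\phi_i$ on $Q$, so by \eqref{cond_gluing_1}
$$
\fint_Q|\phi-\phi_Q|\,dx=\fint_Q|\phi_i-(\phi_i)_Q|\,dx\leq\omega_{S_i}(\phi_i,2\ell(Q))\leq\eta_1(2\ell(Q)),
$$
which already yields \eqref{temp_lem_key_1} for such $Q$. For \eqref{temp_lem_key_2}, take adjacent dyadic $Q_1,Q_2\subseteq\cO$ of common sidelength $\ell$. If $Q_1\subseteq S_i$ and $Q_2\subseteq S_j$ with $i\neq j$, then $S_i\cap S_j\supseteq Q_1\cap Q_2\neq\emptyset$, so $S_i,S_j$ are adjacent and \eqref{cond_gluing_2} gives $|\phi_{Q_1}-\phi_{Q_2}|=|(\phi_i)_{Q_1}-(\phi_j)_{Q_2}|\leq\eta_2(\ell)$ at once. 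If instead $Q_1,Q_2\subseteq S_i$, enclose $Q_1\cup Q_2$ in a dyadic cube $R\subseteq S_i$ with $\ell(R)\leq 4\ell$ (replacing $R$ by $S_i$ in the exceptional case $\ell\approx\ell(S_i)$) and combine $|\phi_{Q_1}-\phi_{Q_2}|\leq|\phi_{Q_1}-\phi_R|+|\phi_R-\phi_{Q_2}|$ with Lemma~\ref{lem_d2_proper} (or the elementary bound $|\phi_Q-\phi_R|\leq(|R|/|Q|)\,\omega_{S_i}(\phi_i,\ell(R))$) to get $|\phi_{Q_1}-\phi_{Q_2}|\leq C\eta_1(4\ell)$. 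In all of these subcases the bound is of the advertised shape $C\max(\eta_1,\eta_2)(C\ell)$.

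The remaining, and genuinely substantial, point is the straddling case: \eqref{temp_lem_key_1} for a straddling $Q$, and \eqref{temp_lem_key_2} when $Q_1$ or $Q_2$ straddles. Here $\phi|_Q$ is a true patchwork of the $\phi_i$ over the sub-tiling $\{S_i\subseteq Q\}$, and one estimates $\fint_Q|\phi-\phi_Q|$ by running the Whitney-chaining argument of Lemmas~2.1--2.3 of \cite{Jones} (as used in the proof of Lemma~\ref{lem_main_one}) \emph{inside} $Q$: the oscillation of $\phi$ on a dyadic subcube lying in a single $S_i$ is controlled by $\eta_1$; to move an average from such a subcube up to its containing $S_i$, or between two dyadic subcubes of one $S_i$, one uses Lemma~\ref{lem_d2_proper}; and to cross from a constituent cube $S_i$ to an adjacent constituent cube $S_j$ one invokes \eqref{cond_gluing_2} at the scale of the smaller of the two. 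I expect the packaging of this step to be the main obstacle: for the chained estimates to \emph{sum} rather than \emph{compound}, the sidelengths along a chain must be lacunary and the logarithmic and combinatorial losses must stay uniformly bounded, so that sums such as $\sum_{S_i\subseteq Q}(\ell(S_i)/\ell(Q))^n\log(\ell(Q)/\ell(S_i))$ converge with a bound independent of $Q$. In the applications of the lemma $\{S_i\}$ arises from a Whitney decomposition, so adjacent constituent cubes have comparable sidelength by \eqref{eqn-adjacent} and in any $Q\subseteq\cO$ the constituent sidelengths stay bounded below; this is exactly the regularity that keeps the chains short and makes the resulting estimates controllable by $\eta_1$ and $\eta_2$ alone. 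Granting it, \eqref{temp_lem_key_1} and \eqref{temp_lem_key_2} hold with $\delta_1,\delta_2$ of the stated form — moduli of continuity by Remark~\ref{rmk-moc} — and Lemma~\ref{lem_main_one} completes the proof.
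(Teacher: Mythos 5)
Your dichotomy between the \emph{interior} case ($Q$ lies in a single $S_i$) and the \emph{straddling} case ($Q$ is tiled by several $S_i$'s) is correct, and it exposes a real gap in the paper's own argument: the paper simply asserts that every dyadic $Q\subset\cO$ is contained in some $S_i$, takes $\delta_1=\eta_1$, and never considers straddling cubes. That assertion is false in general --- a long row of unit dyadic intervals already produces dyadic $Q\subset\cO$ of side $2,4,\dots$ tiled by several $S_i$. Your interior case coincides with the paper's entire proof; the straddling case is what the paper silently skips.

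However, your own treatment stops precisely at the straddling case, which you defer (``Granting it\dots''), and that is not a safe step to grant: as stated, for arbitrary dyadic $\{S_i\}$ with disjoint interiors, the lemma is in fact \emph{false}. Take $S_i=[i,i+1]$ for $-N\leq i<N$, so $\cO=(-N,N)$, and $\phi_i=x|_{S_i}$. Then \eqref{cond_gluing_1} and \eqref{cond_gluing_2} hold with bounded moduli $\eta_1(t),\eta_2(t)\approx\min(t,1)$, while $\omega_\cO(\phi,t)\approx\min(t,N)$, so the conclusion $\omega_\cO(\phi,t)\leq C\max(\eta_1(t),\eta_2(t))$ fails once $t$ and $N$ are large. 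The chaining you propose to run inside a straddling $Q$ would \emph{compound} across order-$\ell(Q)$ constituent cubes, reproducing exactly this linear growth rather than a bound by $\max(\eta_1,\eta_2)(\ell(Q))$. What rescues the paper's applications is that there $\{S_i\}$ is always a Whitney decomposition of $\cO$, and the dyadic cubes that the proof of Lemma~\ref{lem_main_one} actually uses --- Whitney subcubes of arbitrary $Q'\subset\cO$, which satisfy $\ell(P)\leq\dist(P,\cO^c)$ --- \emph{do} lie inside a single $S_i$ by the maximality built into the Whitney construction. So the missing ingredient is not a sharper estimate on straddling $Q$ but a structural hypothesis on $\{S_i\}$ (Whitney-type maximality), without which the statement is wrong; and, correspondingly, Lemma~\ref{lem_main_one} only needs \eqref{temp_lem_key_1} over dyadic cubes satisfying the Whitney smallness condition, which is how the straddling issue disappears.
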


\begin{proof} We want to apply Lemma~\ref{lem_main_one}.
First note that every dyadic cube  $Q\subset \cO$ must be contained
in some $S_i$, and therefore
$$ \fint_Q |\phi(x) - \phi_Q| dx =  \fint_Q |\phi_i(x) - (\phi_i)_Q| dx\leq \omega_{S_i}(\phi_i,s) \leq \eta_1(s)$$
whenever $\ell(Q) \leq s$.  Thus (\ref{temp_lem_key_1}) holds with $\delta_1 = \eta_1$.

For (\ref{temp_lem_key_2}),  suppose $Q_1$ and $Q_2$ are adjacent dyadic cubes in $\cO$ with
$\ell(Q_1)=\ell(Q_2)\leq s$.  Then either they are both contained in a single $S_i$ or they are
contained in adjacent cubes $S_i$ and $S_j$.  In the latter case we have, by hypothesis,
$$|\phi_{Q_1} - \phi_{Q_2} | = |(\phi_i)_{Q_1} - (\phi_j)_{Q_2} | \leq \eta_2(s).$$

In the first case, by choosing adjacent subcubes $Q'_1 \subset Q_1$ and $Q'_2 \subset Q_2$ with half the side
length, and a cube $Q'$ with $\ell(Q) = \ell(Q_1)$ such that $Q'_1 \cup Q'_2 \subset Q' \subset Q$, we have, as in the proof of Lemma 2.2 in \cite{Jones}, that
\begin{eqnarray*}
\lefteqn{|\phi_{Q_1} - \phi_{Q_2} |  =   |(\phi_i)_{Q_1} - (\phi_i)_{Q_2} | }\\
&  \leq & |(\phi_i)_{Q_1} - (\phi_i)_{Q'_1} | + |(\phi_i)_{Q'_1} - (\phi_i)_{Q'} | + |(\phi_i)_{Q'} - (\phi_i)_{Q'_2} | + |(\phi_i)_{Q'_2} - (\phi_i)_{Q_2} |\\
& \leq & C\omega_{S_i}(\phi_i,s)  \leq \eta_1(s)
\end{eqnarray*}
by condition (\ref{cond_gluing_1}).  Thus (\ref{temp_lem_key_2}) holds with $\delta_2= \max(\eta_1,\eta_2)$.
\end{proof}

\subsection{Bump functions}
\label{sec-bump}
As mentioned in the introduction, in \cite{BN2} Brezis and Nirenberg quote a result
of Jones stating that any function in $\VMO(\Omega)$ can be approximated by functions in $C^\infty_0(\Omega)$.
To prove this, they construct a bump function (see Appendix 1 in \cite{BN2}) which we will adapt to our purposes.

We fix $S_0$ to be the cube in $\Rn$ centered at the origin with $\ell(S)=4$, i.e.\  $S_0 = [-2,2]^n$.
For $\mu \in \N$, we define the Brezis-Nirenberg bump function $\psimu$ on $S_0$ as follows:
$$
\psimu(x) = \left\{ \begin{array}{cc}
                          \left(1-\frac1\mu \log(\frac{1}{d(x,\partial S_0)}) \right)_+ & \text{ if $d(x,\partial S_0)<1$,} \\
                          1 & \text{ otherwise,}
                        \end{array}
    \right.
\quad \forall\; x \in S_0.
$$

Given an arbitrary cube $S$ centered at $c_S$ we put $\psimuS(x) = \psimu\left(\frac{4(x-c_S)}{\ell(S)} \right)$ or equivalently
$$
\psimuS(x) = \left\{ \begin{array}{cc}
                          \left(1-\frac1\mu \log(\frac{\ell(S)}{4 d(x,\bS)}) \right)_+ & \text{ if $d(x,\bS)<\ell(S)/4$,} \\
                          1 & \text{ otherwise,}
                        \end{array}
    \right.
\quad \forall\; x \in S.
$$
Notice that in a cube $S$ the function $d(x,\bS)$ is the same in both the $\ell^2$ and $\ell^\infty$ metrics.
Then $\psimuS$ is a continuous function which is identically equal to $1$ on a subcube $J(S)$ concentric with
$S$ and of sidelength $\ell(S)/4$, and is supported in a subcube  $K(S)$ at distance $2^{-\mu-2} \ell(S)$ from the boundary
(recall that we are using
the logarithm to the base $2$), so
$\psimuS \in \VMO(S)$ and as shown in \cite{BN2}, $\|\psimuS\|_{\BMO}= {\mathcal O}(\mu^{-1})$. We want to
refine this to obtain a better estimate for the modulus of mean oscillation of $\psimuS$, since we will need it in what follows.

\begin{lem}
\label{lem_bump_modulus}
There exists a modulus of continuity $\theta$ such that for every cube
$S$,
\begin{equation}
\label{eqn-theta}
\omega_S(\psimuS,t) \leq  \frac{1}{\mu} \theta \left(2^\mu \frac{t}{\ell(S)}\right).
\end{equation}
\end{lem}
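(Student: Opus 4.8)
The plan is to reduce the estimate to a single normalized bump function by exploiting the dilation invariance \eqref{eqn-dilation}, and then to prove the bound for the reference bump $\psimu$ on $S_0 = [-2,2]^n$ with the modulus of continuity $\theta$ chosen uniformly in $\mu$. Precisely, writing $\psimuS(x) = \psimu\bigl(\tfrac{4(x-c_S)}{\ell(S)}\bigr)$, a translation followed by the dilation $x \mapsto \tfrac{4}{\ell(S)}x$ gives $\omega_S(\psimuS, t) = \omega_{S_0}\bigl(\psimu, \tfrac{4t}{\ell(S)}\bigr)$. So it suffices to find one modulus of continuity $\theta$ with $\omega_{S_0}(\psimu, s) \leq \tfrac1\mu \theta(2^{\mu-2} s)$ for all $\mu \in \N$ and all $s > 0$; absorbing the harmless factor $4 = 2^2$ into the argument of $\theta$ (or redefining $\theta$) yields \eqref{eqn-theta}.

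The next step is to estimate $\omega_{S_0}(\psimu, s)$ directly from the definition \eqref{eqn-newmodulus}, using interior cubes $Q \in \I(S_0)$ with $\ell(Q) < s$. I would split into regimes according to how $\ell(Q)$ compares with the distance from $Q$ to $\partial S_0$. Where $Q$ lies in the region $d(\cdot, \partial S_0) \geq 1$, the function $\psimu$ is identically $1$ and contributes nothing. Where $Q$ lies in the region $d(\cdot, \partial S_0) < 1$, on which $\psimu$ agrees (up to the positive-part truncation) with $1 - \tfrac1\mu \log \tfrac{1}{d(x,\partial S_0)} = 1 + \tfrac1\mu \log d(x,\partial S_0)$, the oscillation of $\psimu$ over $Q$ is controlled by $\tfrac1\mu$ times the oscillation of $x \mapsto \log d(x,\partial S_0)$. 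Since $\log d(x,\partial S_0)$ is a Lipschitz function of $\log d$ with the key feature that the function $y \mapsto \log y$ on $(0,\infty)$ has oscillation over an interval $[a,b]$ equal to $\log(b/a)$, one gets, for a cube $Q$ at distance $r = d(Q,\partial S_0)$ from the boundary with sidelength $\ell = \ell(Q)$, a bound of the form $\tfrac1{|Q|^2}\iint_Q |\psimu(x) - \psimu(y)|\,dx\,dy \lesssim \tfrac1\mu \log\bigl(1 + \tfrac{\ell}{r}\bigr)$, because $\sqrt{n}\,\ell \geq$ the diameter is at most $r$ for an interior cube (so in fact $\ell/r$ is bounded and this is $\lesssim \tfrac1\mu \cdot \tfrac{\ell}{r}$). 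The truncation by the positive part only decreases this by \eqref{eqn-truncation}. Summing up the two regimes, the worst case is cubes with $\ell(Q) \approx r \approx$ some dyadic scale, and a careful bookkeeping shows $\omega_{S_0}(\psimu, s) \lesssim \tfrac1\mu$ whenever $s \lesssim 1$, with the implied bound actually tending to $0$ as $s \to 0$: indeed for small $s$, interior cubes of sidelength $< s$ near the boundary satisfy $\ell/r \lesssim s \cdot 2^{\mu}$ once $r \gtrsim 2^{-\mu}$, while cubes with $r \lesssim 2^{-\mu}$ are cut off by the support of $\psimu$ inside $K(S_0)$. This is exactly the source of the factor $2^\mu s$ inside $\theta$.

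Carrying this out, I would define $\theta_0(s) := \sup_{\mu \in \N} \mu\, \omega_{S_0}(\psimu, 2^{-\mu+2} s)$ and then replace it by its nondecreasing version as in Remark~\ref{rmk-moc} and, if needed, pass to the least concave majorant via Lemma~\ref{lem-lcm}; the content of the argument is to show this $\theta_0$ is finite, bounded, and continuous at $0$ with $\theta_0(0) = 0$. Boundedness is essentially the Brezis--Nirenberg estimate $\|\psimuS\|_{\BMO} = O(\mu^{-1})$ recast with the modulus. The delicate point — and the main obstacle — is the uniformity in $\mu$ of the \emph{vanishing} as $s \to 0$: one must check that the contribution of the boundary layer $\{2^{-\mu-2} \leq d(x,\partial S_0) < 1\}$, where $\psimu$ has its logarithmic growth, really does produce mean oscillation over cubes of sidelength $< 2^{-\mu+2}s$ that is $O(s/\mu)$ uniformly, rather than merely $O(1/\mu)$; this is where the scale-matching (interior cubes of small sidelength must sit at distance $\gtrsim \ell(Q)$ from the boundary, hence the ratio $\ell(Q)/d(Q,\partial S_0)$ is forced to be small) does the work, and it must be tracked explicitly through the logarithmic estimate. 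The remaining regime $d(x,\partial S_0) \in [1,2)$, where $\psimu \equiv 1$, and the corner behavior of $S_0$ are routine once the scaling is set up.
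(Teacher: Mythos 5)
Your proof is correct and follows essentially the same route as the paper's: reduce to the reference cube $S_0$ by translation and dilation invariance, use the truncation property \eqref{eqn-truncation} to replace $\psimu$ by $\frac1\mu(\mu + \log d(\cdot,\partial S_0))$, and then exploit the two geometric constraints (interior cubes have $\ell(Q)/d(Q,\partial S_0)$ bounded, and cubes meeting the support of $\psimu$ have $d(Q,\partial S_0) \gtrsim 2^{-\mu}$) to arrive at $\omega_{S_0}(\psimu,s) \lesssim \frac1\mu \min(1, 2^\mu s)$. The paper makes the final bookkeeping explicit by writing the bound as $\frac{C}{\mu}\min\bigl(\frac12, 2^{\mu+1}t\bigr)$ and taking $\theta$ to be the corresponding piecewise linear function, which is exactly the content of your closing paragraph.
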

\begin{remark}
As will follow from the proof, the result holds for any set $S$, not necessary a cube, with the appropriate adjustments
to the definition of $\psimuS$.
\end{remark}

\begin{proof}
By the translation and dilation invariance of the modulus of mean oscillation (see (\ref{eqn-dilation}) above), it is enough to
consider the case $S = S_0$.  We therefore
denote $\psimuS$ by $\psimu$ and write, by (\ref{eqn-newmodulus})
$$
\omega_S(\psimu,t)
=\sup_{Q \in \I(S),  \ell(Q) < t} \frac{1}{|Q|^2} \int_{Q} \int_{Q} |\psimu(x)-\psimu(y)| dx dy.
$$
As noted following the definition, the function $\psimu$ vanishes outside the cube
 $K=\{x\in S: d(x,\bS)\geq2^{-\mu}\}$.
Thus
$$
\omega_S(\psimu,t) =\sup_{Q \in \I(S),  \ell(Q) < t, Q \cap K \neq \emptyset} \frac{1}{|Q|^2} \int_{Q} \int_{Q} |\psimu(x)-\psimu(y)| dx dy.
$$
Note that the function $\psimu$ is a truncation of
$$\psitil(x) =\frac1\mu (\mu+\log d(x,\bS))$$
by $1$ from above and $0$ from below.
Therefore
\begin{eqnarray*}
\omega_S(\psimu,t)
& \leq & \sup_{Q \in \I(S),  \ell(Q) < t, Q \cap K \neq \emptyset} \frac{1}{|Q|^2} \int_{Q} \int_{Q} |\psitil(x)-\psitil(y)| dx dy\\
& = & \sup_{Q \in \I(S),  \ell(Q) < t, Q \cap K \neq \emptyset}  \frac{1}{\mu |Q|^2} \int_{Q} \int_{Q} |\log d(x,\bS)-\log d(y,\bS)| dx dy
\\
& \leq & \frac{C}{\mu}  \sup_{Q \in \I(S),  \ell(Q) < t, Q \cap K \neq \emptyset} \frac{\ell(Q)}{d(Q,\bS)} \leq  \frac{C}{\mu}  \min(\frac 12, 2^{\mu+1} t).
\end{eqnarray*}
Here we have used the fact that if $Q \in \I(S)$ then $\ell (Q) \leq d(Q,\bS)/2$ and if furthermore $Q \cap K \neq \emptyset$ then
$$2^{-\mu} = d(K,\bS) \leq \diam(Q) + d(Q,\bS) \leq 2 d(Q,\bS).$$
Replacing $t$ by $4t/\ell(S)$ for a general cube $S$, we get (\ref{eqn-theta}) with $\theta$ a piecewise linear function.
\end{proof}
	
\section{Proof of Theorem~\ref{thm1}}
\label{sec-proof}
The proof of the theorem is based on three propositions which allow us to glue together the bump functions
introduced in the previous section in order to form VMO functions.

We formulate the propositions  in the following section, then prove the theorem and finally prove the propositions.

\subsection{Key propositions}

\begin{prop}
\label{main_lemma_1}
Given countable collections of numbers $\lambda_i\in \R$, $\mu_i\geq 1$,
and dyadic cubes $S_i$ with pairwise disjoint interior
 satisfying (\ref{eqn-adjacent}), set
\begin{equation}
\label{def_Psi_first_time}
\Psi(x)  = \lambda_i \psi^{\mu_i}_{S_i}(x), \quad x\in S_i,
\end{equation}
where $\psi^{\mu_i}_{S_i}$ are bump functions defined in Section~\ref{sec-bump}. Suppose that the following conditions are satisfied:
\begin{itemize}
\item
There exist $C_1>0$ and $C_2\in \mathbb{R}$ such that for all $i$
\begin{equation}
\label{cond_0}
 \mu_i \geq C_1 \log \frac{L}{\ell(S_i)} - C_2,
\end{equation}
where $L = \sup \ell(S_i)$.
\item There exists a $C>0$ such that for all adjacent cubes $S_i$, $S_j$
\begin{equation}
\label{cond_1}
|\mu_i-\mu_j|\leq C
\end{equation}
\item There exists a modulus of continuity $\delta_1$ such that for all $i$
\begin{equation}
\label{cond_2}
\frac{|\lambda_i|}{\mu_i} \leq \delta_1 \left(\frac{1}{\mu_i}\right)
\end{equation}
\item There exists a modulus of continuity $\delta_2$ such that for all adjacent cubes $S_i$, $S_j$
\begin{equation}
\label{cond_3}
|\lambda_i - \lambda_j| \leq \delta_2\left(\frac{1}{\mu_i}+\frac{1}{\mu_j}\right),
\end{equation}
\end{itemize}
Then $\Psi\in \VMO(\cO)$, where $\cO  = (\bigcup S_i)^{\rm o}$,  and
$$
\|\Psi\|_{\BMO} \leq C (\|\delta_1\|_{\infty}+ \|\delta_2\|_{\infty}).
$$
\end{prop}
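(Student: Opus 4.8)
The plan is to exhibit $\Psi$ as an element of $\VMO(\cO)$ by applying the gluing lemma, Lemma~\ref{lem_gluing_dyadic}, to the pieces $\phi_i := \lambda_i\,\psimuSi$, which are continuous and hence belong to $\VMO(S_i)$. Everything then reduces to producing moduli of continuity $\eta_1,\eta_2$ satisfying hypotheses (\ref{cond_gluing_1}) and (\ref{cond_gluing_2}) of that lemma with $\|\eta_1\|_\infty+\|\eta_2\|_\infty \lC \|\delta_1\|_\infty+\|\delta_2\|_\infty$: indeed Lemma~\ref{lem_gluing_dyadic} then gives $\omega_\cO(\Psi,t)\lC\max(\eta_1(t),\eta_2(t))$, and taking the supremum over $t>0$ yields $\|\Psi\|_{\BMO}\lC\|\delta_1\|_\infty+\|\delta_2\|_\infty$. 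Throughout I would freely replace $\delta_1,\delta_2$ by their nondecreasing majorants, which does not change the suprema (Remark~\ref{rmk-moc}).

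For (\ref{cond_gluing_1}): combining Lemma~\ref{lem_bump_modulus} with (\ref{cond_2}) gives, for every $i$,
\[
\omega_{S_i}(\phi_i,t) = |\lambda_i|\,\omega_{S_i}(\psimuSi,t) \le \frac{|\lambda_i|}{\mu_i}\,\theta\!\left(2^{\mu_i}\frac{t}{\ell(S_i)}\right) \le \delta_1\!\left(\frac1{\mu_i}\right)\theta\!\left(2^{\mu_i}\frac{t}{\ell(S_i)}\right),
\]
so I would set $\eta_1(t):=\sup_i\delta_1(1/\mu_i)\,\theta(2^{\mu_i}t/\ell(S_i))$, a nondecreasing function bounded by $\|\delta_1\|_\infty\|\theta\|_\infty$, so $\|\eta_1\|_\infty\lC\|\delta_1\|_\infty$. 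To see $\eta_1(t)\to0$ as $t\to0$, fix $\varepsilon>0$ and choose $M$ with $\delta_1(1/M)\|\theta\|_\infty<\varepsilon$; then the supremum over $\{i:\mu_i\ge M\}$ is $<\varepsilon$ for every $t$, while for the remaining $i$ condition (\ref{cond_0}) forces $\ell(S_i)\ge L\,2^{-(M+C_2)/C_1}>0$, so $2^{\mu_i}t/\ell(S_i)$ is small uniformly in these $i$ when $t$ is small, and continuity of $\theta$ at $0$ finishes it.

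The substantive step is (\ref{cond_gluing_2}). Let $S_i,S_j$ be adjacent and $Q_1\subset S_i$, $Q_2\subset S_j$ adjacent dyadic cubes with $\ell(Q_1)=\ell(Q_2)=r\le t$; note $Q_1$ meets $\partial S_i$ and $Q_2$ meets $\partial S_j$ along their common face. I would split
\[
(\phi_i)_{Q_1}-(\phi_j)_{Q_2} = (\lambda_i-\lambda_j)\,(\psimuSi)_{Q_1} + \lambda_j\big[(\psimuSi)_{Q_1}-(\psi^{\mu_j}_{S_j})_{Q_2}\big],
\]
estimating the first summand by $|\lambda_i-\lambda_j|\le\delta_2(1/\mu_i+1/\mu_j)$ using $0\le\psimuSi\le1$ and (\ref{cond_3}), and the second by $|\lambda_j|\le\mu_j\delta_1(1/\mu_j)$ (from (\ref{cond_2})) times the key estimate $|(\psimuSi)_{Q_1}-(\psi^{\mu_j}_{S_j})_{Q_2}|\le C/\mu_j$ stated below, so the second summand is $\lC\delta_1(1/\mu_j)$. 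Both summands are thus uniformly bounded; to make them decay in $t$, observe that if $(\psimuSi)_{Q_1}\ne0$ then some $x\in Q_1$ has $d(x,\partial S_i)>2^{-\mu_i-2}\ell(S_i)$, while $d(x,\partial S_i)\le\diam(Q_1)\le\sqrt n\,t$, so $2^{\mu_i}>\ell(S_i)/(4\sqrt n\,t)$; feeding this into (\ref{cond_0}) yields $\mu_i>\beta(t):=\tfrac{C_1}{1+C_1}\log\tfrac{L}{4\sqrt n\,t}-\tfrac{C_2}{1+C_1}$, with $\beta(t)\to\infty$ as $t\to0$, and (\ref{cond_1}) gives $\mu_j>\beta(t)-C$ (symmetrically if instead $(\psi^{\mu_j}_{S_j})_{Q_2}\ne0$). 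Hence any nonzero summand forces $\mu_i,\mu_j>\beta(t)-C$, so, using (\ref{cond_1}) once more to compare $1/\mu_i$ with $1/\mu_j$,
\[
\big|(\phi_i)_{Q_1}-(\phi_j)_{Q_2}\big| \le \delta_2\!\left(\frac{2}{\beta(t)-C}\right) + C\,\delta_1\!\left(\frac{1}{\beta(t)-C}\right) =: \eta_2(t)
\]
whenever $\beta(t)>C$, extended by the constant $\|\delta_2\|_\infty+C\|\delta_1\|_\infty$ for larger $t$. After passing to its nondecreasing majorant $\eta_2$ is a modulus of continuity with $\|\eta_2\|_\infty\lC\|\delta_1\|_\infty+\|\delta_2\|_\infty$, so (\ref{cond_gluing_2}) holds and Lemma~\ref{lem_gluing_dyadic} completes the proof.

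The main obstacle, which I would isolate as a separate lemma (a $\VMO$-quantified analogue of Jones' Lemmas 2.1--2.2), is the estimate $|(\psimuSi)_{Q_1}-(\psi^{\mu_j}_{S_j})_{Q_2}|\le C/\mu_j$ for adjacent equal-sidelength dyadic subcubes straddling the common boundary of $S_i$ and $S_j$. Translating $Q_2$ onto $Q_1$ by $w$, the left side is the average over $Q_1$ of the difference of the two bump functions; since each is the truncation to $[0,1]$ of $1-\tfrac1{\mu}\log\tfrac{\ell(S)}{4\,d(\cdot,\partial S)}$ and the truncation is $1$-Lipschitz, on the region where neither is in its zero or saturation regime the integrand is controlled by $\tfrac1{\mu_j}\big(|\log(\ell(S_i)/\ell(S_j))| + |\log(d(x,\partial S_i)/d(x-w,\partial S_j))|\big) + C|\mu_i-\mu_j|/\mu_j$, where the first and third pieces are $\lC 1/\mu_j$ by (\ref{eqn-adjacent}) and (\ref{cond_1}); the crux is that although $|\log(d(x,\partial S_i)/d(x-w,\partial S_j))|$ blows up pointwise as $x$ approaches the common face (and the edges of that face, where $Q_1,Q_2$ may touch several faces of $S_i,S_j$), it is integrable on $Q_1$ with a purely dimensional bound, while the contribution of the thin region where one bump vanishes is $\lC 1/\mu_j$ after averaging (and is identically zero in the symmetric case $\mu_i=\mu_j$, $\ell(S_i)=\ell(S_j)$). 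It is exactly here that the precise logarithmic profile of the Brezis--Nirenberg bump, rather than merely its $\BMO$ size, is used.
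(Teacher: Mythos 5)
The architecture of your proposal is the same as the paper's: reduce to the gluing Lemma~\ref{lem_gluing_dyadic}, verify (\ref{cond_gluing_1}) via Lemma~\ref{lem_bump_modulus} and (\ref{cond_2}) (your $\sup_i$-plus-two-cases argument is equivalent to the paper's Lemma~\ref{lem_product_of_moduli} followed by Lemma~\ref{lem_uniform_modulus}), and verify (\ref{cond_gluing_2}) by splitting $(\phi_i)_{Q_1}-(\phi_j)_{Q_2}$ into a $(\lambda_i-\lambda_j)$-term controlled by (\ref{cond_3}) and a $\lambda_j$-term controlled by (\ref{cond_2}) times a $C/\mu_j$ comparison of bump averages. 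Your $\beta(t)$-device for turning a uniform $O(1/\mu)$ bound into genuine decay in $t$ is a correct and rather transparent rephrasing of Lemma~\ref{lem_product_of_moduli}. So the reduction to the key estimate $|(\psimuSi)_{Q_1}-(\psi^{\mu_j}_{S_j})_{Q_2}|\le C/\mu_j$ is sound, and that estimate is indeed exactly what the paper devotes Lemmas~\ref{lem_special_of_claim}, \ref{lem_key_one} and the final unnamed lemma to.

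Where the proposal is not yet a proof is precisely in that isolated estimate, and the gaps coincide with the three places the paper works hardest. (1) Your pointwise Lipschitz bound $\tfrac1{\mu_j}\big(|\log(\ell(S_i)/\ell(S_j))| + |\log(d(x,\partial S_i)/d(x-w,\partial S_j))|\big)+C|\mu_i-\mu_j|/\mu_j$ uses $|A_i|:=|\log(\ell(S_i)/4d(x,\partial S_i))|\le\mu_i$, which fails exactly on the strip where $\psimuSi$ has been truncated to $0$; there the Lipschitz inequality is vacuous and you must bound the contribution directly. Your one-line claim that this strip contributes $O(1/\mu_j)$ is not obvious: the naive estimate (strip thickness $\approx 2^{-\mu_i}\ell(S_i)$ over $r=\ell(Q_1)$) is not $O(1/\mu_j)$ for small $r$, and rescuing it requires noticing that the other bump is then also nearly vanishing and doing a computation of exactly the type in Cases 1--2 of the paper's Lemma~\ref{lem_special_of_claim}. (2) The integrability-with-dimensional-bound claim for $|\log(d(x,\partial S_i)/d(x-w,\partial S_j))|$ near edges and corners of the shared face, where $d(\cdot,\partial S_i)$ is no longer a function of $x_1$ alone, is also asserted rather than proved; the paper sidesteps this by passing to frustum subcubes $Q_i'\subset Q_i$, $Q_j'\subset Q_j$ and paying a price of $C\omega_{S_i}(\psimuSi,\ell(Q_i))\le C/\mu_i$ via Lemma~\ref{lem_d2_proper} (this is the content of Lemma~\ref{lem_key_one}). (3) You work directly with $\ell(Q_1)=\ell(Q_2)$, whereas the paper first proves the estimate under the proportionality $\ell(Q_i)/\ell(S_i)=\ell(Q_j)/\ell(S_j)$ (which makes the two averages equal to $\tfrac1l\int_0^l(1+\tfrac1{\mu}\log t)_+\,dt$ with a common $l$) and then reduces equal sidelengths to this via an intermediate cube $Q_j'$ of sidelength $\ell(Q_i)\ell(S_j)/\ell(S_i)$; your $|\log(\ell(S_i)/\ell(S_j))|$ term partially absorbs this but does not replace the reduction. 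None of these gaps is fatal, but filling them would essentially reproduce the paper's three auxiliary lemmas.
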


\begin{cor}
If $\mu_i=1+\log\frac{L}{\ell(S_i)}$ and for some moduli of continuity $\delta_1,\delta_2$ we have
$$
\frac{|\lambda_i|}{\mu_i} \leq \delta_1 \left(\ell(S_i)\right) , \forall i
$$
and
$$
|\lambda_i - \lambda_j| \leq \delta_2(\ell(S_i)+\ell(S_j)),
$$
for all adjacent cubes $S_i$, $S_j$, then $\Psi\in \VMO((\bigcup S_i)^{\rm o})$ and
$$
 \|\Psi\|_\BMO \leq C (\|\delta_1\|_\infty +\|\delta_2\|_\infty).
$$
 \end{cor}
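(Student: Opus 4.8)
The plan is to obtain the Corollary as a direct consequence of Proposition~\ref{main_lemma_1}: for the specific choice $\mu_i = 1+\log\frac{L}{\ell(S_i)}$ I will check that the two hypotheses on the $\lambda_i$ force all four hypotheses \eqref{cond_0}--\eqref{cond_3} of the Proposition, and then simply quote the Proposition. The one computation to keep in hand throughout is that $\mu_i = 1+\log\frac{L}{\ell(S_i)}$ is equivalent to $\ell(S_i) = 2L\,2^{-\mu_i}$; this is the dictionary that converts the quantities $\ell(S_i)$ appearing in the Corollary into the quantities $1/\mu_i$ appearing in the Proposition. Throughout I will also assume, by Remark~\ref{rmk-moc}, that $\delta_1$ and $\delta_2$ are nondecreasing, which costs nothing and does not change their sup norms.

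First I would dispose of \eqref{cond_0} and \eqref{cond_1}. Since $L=\sup_i\ell(S_i)$ we have $\ell(S_i)\le L$, so $\log\frac{L}{\ell(S_i)}\ge 0$; this gives both $\mu_i\ge 1$ (needed for the Proposition to apply) and $\mu_i\ge\log\frac{L}{\ell(S_i)}$, i.e.\ \eqref{cond_0} with $C_1=1$ and $C_2=0$. For \eqref{cond_1}, if $S_i$ and $S_j$ are adjacent then $\mu_i-\mu_j=\log\frac{\ell(S_j)}{\ell(S_i)}$, and \eqref{eqn-adjacent} bounds $\ell(S_j)/\ell(S_i)$ between $\tfrac14$ and $4$, so (recalling that $\log$ is taken base $2$) $|\mu_i-\mu_j|\le 2$.

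The substance is the construction of the moduli of continuity required in \eqref{cond_2} and \eqref{cond_3}, which I will build by post-composing the given $\delta_1,\delta_2$ with the map $g(s):=2L\,2^{-1/s}$ on $(0,\infty)$, extended by $g(0)=0$. This $g$ is nondecreasing, continuous, bounded (by $2L$) and vanishes at $0$, hence $\delta_1\circ g$ is again a modulus of continuity with $\|\delta_1\circ g\|_\infty\le\|\delta_1\|_\infty$; and since $\ell(S_i)=g(1/\mu_i)$, the hypothesis $|\lambda_i|/\mu_i\le\delta_1(\ell(S_i))$ becomes exactly $|\lambda_i|/\mu_i\le(\delta_1\circ g)(1/\mu_i)$, which is \eqref{cond_2}. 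For \eqref{cond_3} the only extra ingredient is the elementary inequality
\[
2^{-1/u}+2^{-1/v}\le 2\cdot 2^{-1/(u+v)}\qquad(u,v>0),
\]
valid because $1/(u+v)\le\min(1/u,1/v)$ and $t\mapsto 2^{-t}$ is decreasing; applied with $u=1/\mu_i$, $v=1/\mu_j$ it yields $\ell(S_i)+\ell(S_j)\le 4L\,2^{-1/(1/\mu_i+1/\mu_j)}$. Since $\delta_2$ is nondecreasing, the hypothesis $|\lambda_i-\lambda_j|\le\delta_2(\ell(S_i)+\ell(S_j))$ then gives $|\lambda_i-\lambda_j|\le\tilde\delta_2(1/\mu_i+1/\mu_j)$ with $\tilde\delta_2(s):=\delta_2(4L\,2^{-1/s})$, once again a modulus of continuity with $\|\tilde\delta_2\|_\infty\le\|\delta_2\|_\infty$, so \eqref{cond_3} holds.

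With \eqref{cond_0}--\eqref{cond_3} in place, Proposition~\ref{main_lemma_1} delivers $\Psi\in\VMO((\bigcup S_i)^{\rm o})$ together with $\|\Psi\|_\BMO\le C(\|\delta_1\circ g\|_\infty+\|\tilde\delta_2\|_\infty)\le C(\|\delta_1\|_\infty+\|\delta_2\|_\infty)$, which is the assertion. There is no genuine obstacle here, since the content is already contained in the Proposition; the only point that needs a little care is verifying that post-composition with the very flat function $s\mapsto 2^{-1/s}$ both preserves the modulus-of-continuity property (it does, since that function is itself a bounded modulus of continuity) and does not enlarge the sup norm, so that the final constant depends only on $\|\delta_1\|_\infty$ and $\|\delta_2\|_\infty$ and in particular not on $L$.
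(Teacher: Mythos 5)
Your proof is correct and fills in the argument the paper leaves implicit (the Corollary is stated without proof, being regarded as an immediate specialization of Proposition~\ref{main_lemma_1}). The dictionary $\ell(S_i)=2L\,2^{-\mu_i}$, the verification of \eqref{cond_0}--\eqref{cond_1}, and the post-composition of $\delta_1,\delta_2$ with the modulus $g(s)=2L\,2^{-1/s}$ (using $2^{-1/u}+2^{-1/v}\le 2\cdot 2^{-1/(u+v)}$ for \eqref{cond_3}) are exactly the intended route, and you correctly note that this preserves the sup norms so the final constant is independent of $L$.
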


\begin{prop}
\label{main_lemma_2}
 Let $\Omega$ be a bounded domain satisfying the Jones condition (\ref{eqn-JonesCond}), $S,S_L\in E$, the collection of Whitney cubes of $\Omega$,
  and $S_L$ is a cube of maximum sidelength $\ell(S_L)=L$ (see \eqref{eqn-L}).
Let $\phi\in \VMO(\Omega)$ and $\omegabar$ be the least concave majorant of $\omega_{\Omega}(\phi,\cdot)$.
Then there exists $C_\Omega>0$ such that
$$
|\phi_S - \phi_{S_L}| \leq C_\Omega \left( 1+ \log\frac{L}{\ell(S)}\right)  \cdot \omegabar\left(\frac{6L }{1+\log\frac{L }{\ell(S)}} \right).
$$
\end{prop}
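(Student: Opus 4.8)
The plan is to connect $S$ to $S_L$ through a ``ladder'' of Whitney cubes whose sidelengths roughly double, climbing from the scale of $S$ up to the scale $L$, to estimate each rung of the ladder at its own scale by Lemma~\ref{lem_d1}, and to sum the $\approx N$ contributions using the concavity of $\omegabar$. Throughout, set $N:=1+\log\frac{L}{\ell(S)}$, so $\ell(S)=L/2^{N-1}$, and recall from Lemma~\ref{lem-lcm} that $\omegabar$ is a nondecreasing concave modulus of continuity; in particular $\omegabar(\lambda t)\le\lambda\,\omegabar(t)$ for $\lambda\ge1$. First I would dispose of the case in which $N$ does not exceed a constant depending only on $\Omega$: then $\ell(S)\ge c_\Omega L$, so $d_2(S,S_L)=\log\frac{L}{\ell(S)}+\log\!\bigl(2+\tfrac{d(S,S_L)}{\ell(S)+L}\bigr)\le C_\Omega$ since $d(S,S_L)\le\diam(\Omega)$ and $\diam(\Omega)/L$ is a finite constant; by \eqref{eqn-JonesCond} then $d_1(S,S_L)\le C_\Omega$, and Lemma~\ref{lem_d1} together with $N\ge1$ and the scaling of $\omegabar$ already gives the inequality. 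So from now on $N$ may be taken large.

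To build the ladder, I would fix $\xi\in\bOmega$ with $d(S,\xi)=d(S,\bOmega)$, so $d(S,\xi)\approx\ell(S)$ by \eqref{eqn-distance}. Since $\Omega$ satisfies the Jones condition it is a uniform domain (\cite{Gehring_Osgood}), hence satisfies the interior corkscrew condition (cf.\ \cite{Jerison_Kenig,Toro}): there is $M_\Omega>1$ such that for every $r\in(0,\diam\Omega)$ there is $A_r\in\Omega$ with $|A_r-\xi|<r$ and $\dist(A_r,\bOmega)>r/M_\Omega$. For $j=0,1,\dots,N-1$ let $r_j:=2^j\ell(S)$ (so $r_{N-1}=L<\diam\Omega$) and let $T_j$ be the Whitney cube of $\Omega$ containing $A_{r_j}$. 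From $r_j/M_\Omega<\dist(A_{r_j},\bOmega)\le|A_{r_j}-\xi|<r_j$ and \eqref{eqn-distance} one obtains $r_j/C_\Omega\le\ell(T_j)\le r_j$ and $d(T_j,\xi)<r_j$; consequently $\ell(T_{j+1})\approx\ell(T_j)$, $d(T_j,T_{j+1})<r_j+r_{j+1}$, and hence $d_2(T_j,T_{j+1})\le C_\Omega$. The same estimates, together with $d(S,T_0)\le C_\Omega\ell(S)$ and $d(T_{N-1},S_L)\le\diam\Omega\le C_\Omega L$, give $d_2(S,T_0)\le C_\Omega$ and $d_2(T_{N-1},S_L)\le C_\Omega$. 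By \eqref{eqn-JonesCond}, every pair of consecutive members of the list $S,T_0,T_1,\dots,T_{N-1},S_L$ is joined by a Whitney chain of length at most $C_\Omega$.

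Next, apply Lemma~\ref{lem_d1} to each consecutive pair. For $T_j,T_{j+1}$ it gives $|\phi_{T_j}-\phi_{T_{j+1}}|\le C\,d_1(T_j,T_{j+1})\,\omega_\Omega(\phi,\ell(Q))$ with $\ell(Q)\le 4^{\,d_1(T_j,T_{j+1})}\ell(T_j)\le C_\Omega\,2^j\ell(S)$, whence, using $\omega_\Omega(\phi,\cdot)\le\omegabar$ and the scaling of $\omegabar$,
\begin{equation*}
|\phi_{T_j}-\phi_{T_{j+1}}|\le C_\Omega\,\omegabar\bigl(2^j\ell(S)\bigr),\qquad 0\le j\le N-2,
\end{equation*}
and likewise $|\phi_S-\phi_{T_0}|\le C_\Omega\,\omegabar(\ell(S))$ and $|\phi_{T_{N-1}}-\phi_{S_L}|\le C_\Omega\,\omegabar(L)=C_\Omega\,\omegabar\bigl(2^{N-1}\ell(S)\bigr)$. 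Telescoping along the list, collecting these bounds and then applying Jensen's inequality for the concave function $\omegabar$,
\begin{equation*}
|\phi_S-\phi_{S_L}|\le C_\Omega\sum_{j=0}^{N-1}\omegabar\bigl(2^j\ell(S)\bigr)\le C_\Omega\,N\,\omegabar\!\left(\frac1N\sum_{j=0}^{N-1}2^j\ell(S)\right)\le C_\Omega\,N\,\omegabar\!\left(\frac{2^N\ell(S)}{N}\right)=C_\Omega\,N\,\omegabar\!\left(\frac{2L}{N}\right),
\end{equation*}
and since $\omegabar$ is nondecreasing the last quantity is at most $C_\Omega\,N\,\omegabar(6L/N)$, which is the assertion (here $C_\Omega$ also absorbs the dependence on $n$ and on $M_\Omega$).

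The step I expect to be the main obstacle is the very construction of the ladder with uniformly inexpensive rungs. Along a shortest Whitney chain from $S$ to $S_L$ the number of cubes is $\le C_\Omega N$, but the sidelengths need not increase in a roughly dyadic way, and a single rung between two of its cubes can be costly, since the factor $4^{d_1}$ in Lemma~\ref{lem_d1} is controlled only on average along the chain. Passing instead through interior corkscrew points at the dyadic scales $2^j\ell(S)$ forces $\ell(T_j)\approx 2^j\ell(S)$ and keeps each $d_2(T_j,T_{j+1})$ bounded, so that every rung costs at most $C_\Omega\,\omegabar(2^j\ell(S))$; the concavity of $\omegabar$ then compresses the sum of these $N$ terms into the single term $N\,\omegabar(6L/N)$.
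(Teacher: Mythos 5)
Your proof is correct, but it departs from the paper's argument in a significant way. The paper stays entirely within the combinatorial framework of Whitney chains: after proving a preliminary version (Lemma~\ref{lem_log_improvment_easy_version}) under the assumption of a chain with doubling sidelengths, it invokes Jones' combinatorial Lemma~2.6 (stated here as Lemma~\ref{lem_Jones_about_chains}) to extract from a \emph{shortest} Whitney chain from $S$ to $S_L$ a subsequence $S_{\alpha_0},\dots,S_{\alpha_M}$ along which the sidelengths double and $d_1(S_{\alpha_{i-1}},S_{\alpha_i})\le\kappa^2$; the doubling structure then feeds into the same Jensen/concavity argument you use. You instead build the doubling ladder geometrically, by fixing a nearest boundary point $\xi$ to $S$ and using the interior corkscrew property of uniform domains at the dyadic scales $2^j\ell(S)$ about $\xi$, so that the resulting Whitney cubes $T_j$ have $\ell(T_j)\approx 2^j\ell(S)$ and each $d_2(T_j,T_{j+1})$ is bounded; the Jones condition then makes each rung $d_1$-cheap, and the summing step is the same. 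What the paper's route buys is self-containment -- Lemma~2.6 of \cite{Jones} is a statement purely about the Whitney decomposition and the hypothesis \eqref{eqn-JonesCond}, with no appeal to the geometric characterization of uniform domains -- whereas your route requires importing, as a black box, the chain of equivalences Jones condition $\Rightarrow$ uniform domain $\Rightarrow$ interior corkscrew (implicit in \cite{Gehring_Osgood,Jerison_Kenig,Toro}, stated in the introduction but not used elsewhere in the paper). Your version is arguably more geometrically transparent and makes the ``ladder of scales'' visible, at the cost of an extra layer of cited theory. Both the ladder construction and the subsequent telescoping/Jensen computation are correct, including the scaling $\omegabar(\lambda t)\le\lambda\,\omegabar(t)$ for $\lambda\ge 1$ from concavity and $\omegabar(0)=0$, and the disposal of the small-$N$ case via $d_2(S,S_L)\le C_\Omega$.
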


\begin{prop}
\label{main_lemma_3}
Let $\Omega$ be a domain satisfying the Jones condition (\ref{eqn-JonesCond}),
$\phi_1\in \VMO(\Omega)$ and $\phi_2\in \VMO(\Omega')$.
If there exists a modulus of continuity $\eta$ such that for each $S'\in E'$ with $S' \subset \Omega'$,
and for some $S \in E$ which is a matching cube of $S'$,
\begin{equation}
\label{eqn-lemma3hyp1}
|(\phi_1)_{S} - (\phi_2)_{S'}| \leq \eta(\ell(S')),
\end{equation}
then
$$
\Phi(x) = \left\{
\begin{array}{cc}
        \phi_1(x), &  x\in \Omega\\
        \phi_2(x), & x\in \Omega'
\end{array}
\right.
$$
is an element of $\VMO(\Omegatil)$, where $\Omegatil$ is defined as in (\ref{eqn-Omegatil}), and for some constant $C>0$
$$
\|\omega_{\Omegatil}(\Phi, \cdot)\|_\infty \leq C (\|\omega_{\Omega}(\phi_1, \cdot)\|_\infty+\|\omega_{\Omega'}(\phi_2,\cdot)\|_\infty+ \|\eta\|_\infty).
$$
\end{prop}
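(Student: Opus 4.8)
The plan is to estimate the modulus $\omega_{\Omegatil}(\Phi,\cdot)$ directly from the definition (\ref{eqn-modulus}), bounding $\fint_Q|\Phi-\Phi_Q|\le 2\fint_Q|\Phi-c_Q|$ for every cube $Q\subset\Omegatil$ with a judiciously chosen constant $c_Q$; equivalently one may reduce to dyadic cubes and adjacent pairs of dyadic cubes via Lemma~\ref{lem_main_one}. Write $\rho=\ell(Q)$. Two cases are immediate: if $Q\subset\Omega$ then $\fint_Q|\Phi-\Phi_Q|=\fint_Q|\phi_{1}-(\phi_{1})_Q|\le\omega_\Omega(\phi_{1},\rho)$, and if $Q$ is disjoint from $\Omega$ then it lies, up to a null set, in a single component of $\Omega'$ and $\fint_Q|\Phi-\Phi_Q|\le\omega_{\Omega'}(\phi_{2},\rho)$ — indeed, for cubes contained in $\Omega$ or in $\Omega'$ the conclusion is just Lemma~\ref{lem_gluing_dyadic} applied to the Whitney cubes of $\Omega$, respectively of $\Omega'$, which never touch one another. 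The content is the remaining \emph{straddling} case, where $Q$ meets both $\Omega$ and $\Omega^{c}$, and this is where the Jones condition (\ref{eqn-JonesCond}) is used, through the fact that $\Omega$ is a uniform domain: $\bOmega$ is porous, $\Omega$ satisfies the interior corkscrew condition, and $\Omega$ has no interior slits (so a straddling cube genuinely meets $\Omegabar^{c}$, and two nearby Whitney cubes of $\Omega$ of comparable sidelength are joined by a bounded Whitney chain — precisely what fails for the slit disk).

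For a straddling $Q$, fix $q\in Q\cap\bOmega$. By the Whitney property (\ref{eqn-distance}), every cube of $E$ or $E'$ meeting $Q$ is at distance $\le\diam Q$ from $q\in\Omega^{c}$ (resp.\ from $q\in\Omegabar$), hence has sidelength $\lesssim\rho$ and lies in a fixed dilate $CQ$; by the corkscrew condition there is $S_{0}\in E$ with $\ell(S_{0})\approx\rho$ and $d(S_{0},q)\lesssim\rho$. Set $c_Q=(\phi_{1})_{S_{0}}$. Covering $Q\cap\Omega$ by the cubes $S\in E$ meeting it and $Q\cap\Omega^{c}$ (up to $Q\cap\bOmega$) by the cubes $S'\in E'$ meeting it, and using their disjointness (so $\sum_{S}|S|+\sum_{S'}|S'|\le C|Q|$),
$$\fint_Q|\Phi-c_Q|\le\frac{C}{|Q|}\sum_{S}|S|\Big(\omega_\Omega(\phi_{1},\ell(S))+|(\phi_{1})_{S}-(\phi_{1})_{S_{0}}|\Big)+\frac{C}{|Q|}\sum_{S'}|S'|\Big(\omega_{\Omega'}(\phi_{2},\ell(S'))+|(\phi_{2})_{S'}-(\phi_{1})_{S_{0}}|\Big).$$
Since $S$ and $S_{0}$ are cubes of sidelength $\lesssim\rho\approx\ell(S_{0})$ at distance $\lesssim\rho$, the Jones condition gives $d_{1}(S,S_{0})\le\kappa\,d_{2}(S,S_{0})\le C(1+\log(\rho/\ell(S)))$, and a shortest Whitney chain between such cubes can be chosen to consist of cubes of sidelength $\lesssim\rho$; Lemma~\ref{lem_d1} then gives $|(\phi_{1})_{S}-(\phi_{1})_{S_{0}}|\le C(1+\log(\rho/\ell(S)))\,\omega_\Omega(\phi_{1},C\rho)$. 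For the terms with $\phi_{2}$ we pick a matching cube $S(S')\in E$ of $S'$ — so $\ell(S(S'))\approx\ell(S')$ by (\ref{matching}) and $S(S')$ lies within $\lesssim\rho$ of $Q$ — and combine the hypothesis (\ref{eqn-lemma3hyp1}) with the previous estimate:
$$|(\phi_{2})_{S'}-(\phi_{1})_{S_{0}}|\le|(\phi_{2})_{S'}-(\phi_{1})_{S(S')}|+|(\phi_{1})_{S(S')}-(\phi_{1})_{S_{0}}|\le\eta(\ell(S'))+C(1+\log(\rho/\ell(S')))\,\omega_\Omega(\phi_{1},C\rho).$$
The only sum not already of the desired form is $\sum_{S}|S|(1+\log(\rho/\ell(S)))$ (and its $E'$ analogue), and here porosity of $\bOmega$ — a consequence of the corkscrew condition — yields the Carleson-type bound $\sum_{S\subset CQ}|S|(1+\log(\rho/\ell(S)))\le C_{\Omega}|Q|$ for both families. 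Altogether $\fint_Q|\Phi-\Phi_Q|\le C_{\Omega}\big(\omega_\Omega(\phi_{1},C\rho)+\omega_{\Omega'}(\phi_{2},C\rho)+\eta(C\rho)\big)$, which is bounded by $C_{\Omega}(\|\omega_\Omega(\phi_{1},\cdot)\|_{\infty}+\|\omega_{\Omega'}(\phi_{2},\cdot)\|_{\infty}+\|\eta\|_{\infty})$ and tends to $0$ as $\rho\to0$. For the finitely many scales $\rho\gtrsim L$ one repeats the argument with a maximal Whitney cube $S_{L}$ in place of $S_{0}$ and Proposition~\ref{main_lemma_2} in place of Lemma~\ref{lem_d1}, which affects only the bound on the supremum. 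Combining the three cases gives $\Phi\in\VMO(\Omegatil)$ with the asserted estimate.

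The straddling case is the main obstacle, and inside it the two geometric facts that must be drawn from the Jones condition: (a) that a shortest Whitney chain joining two nearby Whitney cubes of comparable small sidelength can be chosen to stay at comparable scales — so that the modulus $\omega_\Omega(\phi_{1},\cdot)$ entering through Lemma~\ref{lem_d1} is only ever evaluated at scales $\lesssim\rho$, and hence is small — and (b) the porosity/Carleson estimate that absorbs the $\log(\rho/\ell(S))$ factors coming from $d_{2}$ and Lemma~\ref{lem_d1}. Making these precise, together with the absence of slits (which is what legitimizes using the single cube $S_{0}$ as a comparison point for all of $Q\cap\Omega$), is the technical heart of the proof; the remainder is bookkeeping with the triangle inequality and the definitions of $\omega_\Omega$, $d_{1}$, $d_{2}$, and the Whitney decomposition.
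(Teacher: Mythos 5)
Your high-level strategy — estimate $\fint_Q|\Phi-\Phi_Q|$ directly by choosing a reference constant $c_Q=(\phi_1)_{S_0}$, covering a straddling cube $Q$ by the Whitney cubes of $E$ and $E'$ that meet it, and controlling $|\Phi_{S}-c_Q|$ termwise — is genuinely different from what the paper does, and it is a reasonable outline.  The paper instead invokes Lemma~\ref{lem_main_one} to reduce to dyadic cubes and adjacent dyadic pairs, and then for a straddling dyadic $Q$ uses Jones' stopping-time construction from his Lemma~2.11: a nested multi-scale family $\cF_m$ of dyadic subcubes of $Q$, each of which \emph{is} contained in a single Whitney cube, with geometric volume decay $\sum_{\cF_m}|Q^m_j|\leq q^{m-1}|Q|$, and a comparison chain $Q_0\to Q^1_{j_1}\to\cdots\to Q^m_{j_m}$ in which consecutive cubes have comparable size and distance.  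Each link of that chain is handled by Lemma~\ref{lem_used_twice2}, in which all $d_1$-distances are \emph{uniformly bounded} (by $C_\kappa$), so that $\omega_\Omega(\phi_1,\cdot)$ and $\omega_{\Omega'}(\phi_2,\cdot)$ are only ever evaluated at scales comparable to $\ell(Q)$.  The logarithmic factor appears as the telescoping length $m$, and is killed against the geometric decay $q^{m-1}$.

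Your version packages these two inputs differently, and the two places where you wave hands are exactly where the real work sits.  First, you assert that Lemma~\ref{lem_d1} gives $|(\phi_1)_S-(\phi_1)_{S_0}|\leq C(1+\log(\rho/\ell(S)))\,\omega_\Omega(\phi_1,C\rho)$, but Lemma~\ref{lem_d1} only bounds the largest cube $Q$ in a shortest chain by $\ell(Q)\leq 4^{d_1(S,S_0)}\ell(S)$, and since $d_1(S,S_0)\lesssim 1+\log(\rho/\ell(S))$ this allows $\ell(Q)$ to be far larger than $\rho$, which would force you to evaluate $\omega_\Omega(\phi_1,\cdot)$ at an uncontrolled (possibly $\approx L$) scale and would ruin the $\rho\to 0$ limit.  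Showing the chain can be taken to stay at scales $\lesssim\rho$ is precisely the chain-rescaling argument in the proof of Proposition~\ref{main_lemma_2} via Lemma~\ref{lem_Jones_about_chains}; it is not a corollary of Lemma~\ref{lem_d1}.  Second, the Carleson bound $\sum_{S\subset CQ}|S|(1+\log(\rho/\ell(S)))\leq C_\Omega|Q|$ is exactly the content of property (ii) of Jones' stopping-time family (the constant $q<1$), repackaged; it is a nontrivial consequence of the Jones condition, and the paper does not reprove it but cites Jones' Lemma~2.11.  (It does follow from uniform porosity of $\bOmega$, which in turn follows from the interior corkscrew condition of uniform domains, but that chain of implications is itself several lemmas.)  So your outline is correct in spirit and yields the same estimate, but the two facts you flag as ``the technical heart'' are \emph{the} proof: neither is a triviality, and the first as stated (citing only Lemma~\ref{lem_d1}) is actually false without the Lemma~\ref{lem_Jones_about_chains} refinement.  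The paper's route, while less direct, has the virtue that every Whitney distance that arises is $O(1)$, so the scale control is automatic and the Carleson estimate is imported wholesale as property (ii).

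One smaller point: the paper also has to prove the adjacent-pair condition \eqref{temp_02} of Lemma~\ref{lem_main_one}, which you would avoid since you estimate $\omega_{\Omegatil}(\Phi,\cdot)$ directly from \eqref{eqn-modulus}; that is a genuine small simplification your route buys, provided the two core estimates above are supplied.
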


\subsection{Proof of Theorem~\ref{thm1} part (i)}

Given $\phi\in \VMO(\Omega)$, where $\Omega$ is a bounded domain satisfying Jones' condition (\ref{eqn-JonesCond}),
we need to extend it to a function in $\VMO(\Rn)$, and show that the extension operator is bounded linear operator
in the $\BMO$ norm.  This will be done in two steps.

\begin{itemize}

\item \underline{Main extension} Extend $\phi$, in a linear fashion,
to a function a function $\tphi$ which is in $\VMO$ with compact support in $\Omegatil$ and
show
\begin{equation}
\label{eqn-bmotphi}
\|\tphi\|_{\BMO(\Omegatil)} \lesssim \|\phi\|_{\BMO(\Omega)},
\end{equation}
with a constant depending only on $\Omega$.

\item \underline{Further extension}  Extend $\tphi$ by zero outside $\Omegatil$ and show it
is in $\VMO(\Rn)$ with
$$\|\tphi\|_{\BMO(\Rn)} \leq C \|\tphi\|_{\BMO(\Omegatil)},$$
with a constant depending only on $\Omega$.
 \end{itemize}

 We now describe the construction of the main extension.
By Corollary 2.9 in \cite{Jones}, $\bOmega$ has measure zero.  Therefore in order to define $\tphi$ almost everywhere on $\Omegatil$, we only need to define it on $\Omega$ and $\Omega'$.
 By analogy with the definition of the extension in \cite{Jones}, we set
$$
 \tphi(x) = \left\{  \begin{array}{cc}
        \phi(x), &  x\in \Omega,\\
        \lambda_i \psi^{\mu_i}_{S'_i}(x),  & x\in S'_i,
    \end{array}
    \right.
$$
where for each Whitney cube $S'_i$ contained in $\Omega'$ we fix a matching cube $S_i \in E$, $\psi^{\mu_i}_{S'_i}$ is the bump function defined
in Section~\ref{sec-bump}, and
$$\mu_i = 1+\log\frac{L}{\ell(S'_i)},  \quad \lambda_i = \phi_{S_i}.$$

First we check that this extension is linear in $\phi$.  That follows from the fact that the only dependence on $\phi$ is in $\lambda_i$.
We must also address the fact that the function $\phi \in \VMO(\Omega)$ is only defined up to constants.  In Jones' construction in \cite{Jones}, the extension is also modulo constants.
However, when we use the bump functions, we consider that they have compact support, and thus we must fix the function $\phi$ by assuming that
\begin{equation}
\label{eqn-SL}
\phi_{S_L} = 0,
\end{equation}
where we have picked one of the cubes in $E$ of sidelength $L$ and called it $S_L$.  If we add a constant to $\phi$, we will then need to add
that same
constant in the definition of $\tphi$ on $\Omega'$.

We claim that $\tphi$ has compact support in $\Omegatil$ with
\begin{equation}
\label{eqn-supptphi}
d(\supp(\tphi), \partial \Omegatil) \approx L.
\end{equation}
By the properties of the bump functions, we have that
$$\supp(\tphi) \subset \bigcup_i K(S'_i) \cup \Omegabar  \subset \Omegatil,$$
where the subcube $K(S'_i)$ of $S'_i$ contains the support of $\psi^{\mu_i}_{S'_i}$
and lies at a distance of $2^{-\mu-2}\ell(S'_i)$ from $\partial S'_i$.

By property \eqref{eqn-OmegatilBoundary} of the boundary of $\Omegatil$, we have
\begin{eqnarray*}
d(\supp(\tphi), \partial \Omegatil) & = & \min_{S' \in E', \frac L{2} \leq \ell(S') \leq L} d(K(S'),\partial S') \\
 & = &  \min_{S' \in E', \frac L{2} \leq \ell(S') \leq L} 2^{-\mu-2}\ell(S')\\
& \approx & \min_{S' \in E', \frac L{2} \leq \ell(S') \leq L} \frac{\ell(S')}{L}\ell(S')
\approx L.
\end{eqnarray*}

Let us now show how to go to the further extension from the main extension.
Assuming $\tphi \in \VMO(\Omegatil)$, since it has compact support, by Remark 6 preceding
Theorem 2 in \cite{BN2} it will have a zero extension to $\VMO(\Btil)$,
and hence to $\VMO(\Rn)$, as noted in the
introduction.  This is not dependent on any smoothness of the boundary of $\Omegatil$, and in fact is valid for any bounded open
set, not necessarily a domain.
Moreover,  by Remarks 1 and 6 in \cite{BN2}, the zero extension of $\tphi$ will be in
$\BMO(\Btil)$, hence in $\BMO(\Rn)$, with
$$\|\tphi\|_{\BMO(\Rn)} \leq C \|\tphi\|_{\BMO(\Omegatil)},$$
where $C$ depends on how far the support of $\tphi$ lies from
$\partial \Omegatil$, hence by (\ref{eqn-supptphi}), only on the constant $L$.
Finally, we note that this is a linear extension, and as pointed out above, if we modify $\tphi$ in $\Omegatil$ by adding a constant,
the extension will not be a zero extension but rather a constant extension.

It now remains to show how the main extension follows from the propositions in the previous section.

We first apply the Corollary of Proposition~\ref{main_lemma_1} to show that $\tphi\in \VMO(\Omega')$,
where again with an abuse of notation we use $\Omega'$ to mean its interior
(see Remarks~\ref{rmk-opensets},\ref{rmk-interior}).  Let us verify the hypotheses of the Corollary for the cubes $S'_i$ making up $\Omega'$.
For each $i$, let $S_i$ be the cube which we chose as a matching
cube to $S'_i$.
Note that by (\ref{matching}) we can exchange $\ell(S'_i)$ and $\ell(S_i)$ up to a factor of $2$.  We first
apply Proposition~\ref{main_lemma_2} to $S = S_i$ and the cube $S_L$ for which we assumed vanishing mean in (\ref{eqn-SL})
to get
$$
\frac{|\lambda_i|}{\mu_i} = \frac{|\phi_{S_i}|}{1+\log\frac{L}{\ell(S'_i)}} \approx
\frac{|\phi_{S_i}|}{1+\log\frac{L}{\ell(S_i)}} \leq \delta_1(\ell(S_i)):= C_\Omega \omegabar\left(\frac{6 L }{(1+\log\frac{L }{\ell(S'_i)})} \right)
$$
with $\omegabar$ the least concave majorant of $\omega_\Omega(\phi,\cdot)$.  Using Lemma~\ref{lem-lcm}, we have that $\delta_1$
is a modulus of continuity, with $\|\delta_1\|_\infty \leq C_\Omega \|\phi\|_{\BMO}$.
Furthermore, for adjacent $S'_i,S'_j$, by Lemma \ref{lem_d1}
$$
|\lambda_i-\lambda_j|=|\phi_{S_i}-\phi_{S_j}| \leq C_\Omega \cdot \omega_\Omega(\phi,\ell(S'_i)+\ell(S'_j)).
$$
Thus we can apply the Corollary to conclude that $\tphi$ is in $\VMO(\Omega')$ with
\begin{equation}
\label{eqn-tphiBMO}
\|\tphi\|_{\BMO(\Omega')} \leq C (\|\delta_1\|_\infty + \|\omega_\Omega(\phi, \cdot)\|_\infty \leq C\|\phi\|_{\BMO(\Omega)}.
\end{equation}

The theorem will be proved if we can apply Proposition~\ref{main_lemma_3} with $\phi_1 = \phi$ on $\Omega$ and
$\phi_2 = \tphi$ on $\Omega'$.  Note that for any $S'$ in $\Omega'$, taking the matching $S\in E$ to be the one chosen in the
construction of $\tphi$, we have
$$
|\phi_S - \tphi_{S'}| = |\phi_S| |1-(\psimuSp)_{S'} |, \quad \mu = 1+\log(L/\ell(S')).
$$
Recalling that $\psimuSp$ is identically $1$ on a subcube $J(S') \subset S'$  which is concentric with $S'$ and of sidelength $\ell(S')/4$, we can write the last equality as
$$
|\phi_S - \tphi_{S'}| = |\phi_S|  |(\psimuSp)_{J(S')}-(\psimuSp)_{S'} |.
$$
From Lemmas \ref{lem_d1} and \ref{lem_bump_modulus}, we get
 $$
|\phi_S - \tphi_{S'}| \leq  \frac{C|\phi_S|}{1+\log\frac{L}{\ell(S')}}.
$$
Combining this with Proposition~\ref{main_lemma_2}, applied to the cube $S$ and the cube $S_L$ with vanishing mean as in (\ref{eqn-SL}), we obtain
$$
 |\phi_S - \tphi_{S'}| \leq C \eta(\ell(S'))
$$
for $\eta(t) = \omegabar(6L(1 + \log (L/t))^{-1})$, where $\omegabar$ is the least concave majorant of $\omega_\Omega(\phi,\cdot)$.  By
Lemma~\ref{lem-lcm} we have that $\eta$ is a modulus of continuity with $\|\eta\|_\infty \leq \|\omega_\Omega(\phi,\cdot)\|_\infty = \|\phi\|_{\BMO(\Omega)}$,
 so we can apply Proposition~\ref{main_lemma_3} and (\ref{eqn-tphiBMO}) to conclude that $\tphi \in \VMO(\Omegatil)$ with
$$\|\tphi\|_{\BMO(\Omegatil)} \leq C (\|\phi\|_{\BMO(\Omega)}+\|\tphi\|_{\BMO(\Omega')}  + \|\eta\|_\infty)
\leq C\|\phi\|_{\BMO(\Omega)}.$$
This shows the boundedness of the main extension and completes the proof of part (i) of Theorem~\ref{thm1}.

\subsection{Proof of Theorem~\ref{thm1} part (ii)}
As previously stated, $\Omega$ is a bounded domain.  In this section, we assume that there is a bounded linear extension
$$\Lambda: \VMO(\Omega) \ra \BMO(\Rn)$$
and we want to show
this implies that $\Omega$ satisfies Jones' Condition  (\ref{eqn-JonesCond}).

We first fix two Whitney cubes $S_1,S_2\in E$, and define a function $\phi_{S_1,S_2}$ on $\Omega$ as follows.  Recalling the bump
functions defined in Section~\ref{sec-bump}, on each $S_i\in E$ set
$$
\phi_{S_1,S_2}(x) = \lambda_i \psimuSi(x),\quad x \in S_i,
$$
where
$$ \lambda_i = \delta \left(\frac{1+d_1(S_1,S_2)}{1+d_1(S_1,S_i)}\right) d_1(S_1,S_i),\quad  \mu_i=1+d_1(S_1,S_i).$$
As in Definition~\ref{def-d1}, $d_1(S_1,S_2)$ is the length of the shortest Whitney chain between $S_1$ and $S_2$,
and we fix $\delta$ to be some smooth nonnegative nondecreasing function with $\delta(t)=0$ for $t\leq 1/2$ and $\delta(t)=1$ for $t\geq 1$.% (we could just set $\delta(x) = 2x-1$ on $[1/2,1]$).
We want to apply the extension to $\phi_{S_1,S_2}$, so we need to show it is in $\VMO$ on the domain.

\begin{lem}
\label{lem_for_th1}
$\phi_{S_1,S_2}\in \VMO(\Omega)$ and
$$
\|\phi_{S_1,S_2}\|_\BMO \leq C,
$$
uniformly in $S_1,S_2$, for some $C>0$.
\end{lem}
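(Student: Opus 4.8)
The plan is to verify the hypotheses of Proposition~\ref{main_lemma_1} for the family $\{S_i\}_{S_i \in E}$ with the chosen parameters $\mu_i = 1 + d_1(S_1,S_i)$ and $\lambda_i = \delta\!\left(\frac{1+d_1(S_1,S_2)}{1+d_1(S_1,S_i)}\right) d_1(S_1,S_i)$, so that the conclusion $\phi_{S_1,S_2} \in \VMO(\Omega)$ with $\|\phi_{S_1,S_2}\|_\BMO \leq C(\|\delta_1\|_\infty + \|\delta_2\|_\infty)$ follows immediately. The key point is that since $\Omega$ is \emph{not} assumed to satisfy the Jones condition here, we cannot use it, but we also do not need it: the four conditions of Proposition~\ref{main_lemma_1} are purely combinatorial statements about the quantities $d_1(S_1,S_i)$, and all the relevant moduli of continuity will turn out to be bounded by absolute constants, which is exactly what produces the uniform bound.

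The verification proceeds condition by condition. For \eqref{cond_0}, I would use the elementary fact (which follows from \eqref{eqn-distance} and \eqref{eqn-adjacent}, exactly as in the remark after Definition~\ref{def-d1}, or as in \cite{Jones}) that passing from one Whitney cube to an adjacent one changes the sidelength by at most a factor of $4$, so that a chain of length $d_1(S_1,S_i)$ from $S_1$ to $S_i$ forces $\ell(S_i) \geq 4^{-d_1(S_1,S_i)}\ell(S_1)$, i.e. $d_1(S_1,S_i) \gtrsim \log(\ell(S_1)/\ell(S_i)) \geq \log(\ell(S_1)/L) = -\log(L/\ell(S_i)) + \log(\ell(S_1))$; rearranging gives $\mu_i = 1 + d_1(S_1,S_i) \geq C_1 \log(L/\ell(S_i)) - C_2$ with $C_1, C_2$ absolute (depending on $S_1$, but the final estimate will be uniform because $\lambda_i$ is controlled independently). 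For \eqref{cond_1}, the triangle inequality for $d_1$ together with the fact that adjacent cubes are joined by a chain of length $1$ gives $|d_1(S_1,S_i) - d_1(S_1,S_j)| \leq 1$, hence $|\mu_i - \mu_j| \leq 1$. For \eqref{cond_2}, I would bound $\frac{|\lambda_i|}{\mu_i} = \frac{\delta(\cdots)\, d_1(S_1,S_i)}{1+d_1(S_1,S_i)} \leq \delta\!\left(\frac{1+d_1(S_1,S_2)}{1+d_1(S_1,S_i)}\right) \leq \mathbf{1}_{\{1+d_1(S_1,S_i) \leq 2(1+d_1(S_1,S_2))\}}$, using that $\delta$ vanishes for arguments $\leq 1/2$; since on this set $\mu_i \leq 2(1+d_1(S_1,S_2))$, i.e. $\tfrac1{\mu_i} \geq \tfrac1{2(1+d_1(S_1,S_2))}$, one can take $\delta_1(t) = \min(1, 2(1+d_1(S_1,S_2))\,t)$ (capped piecewise-linear), which is a modulus of continuity with $\|\delta_1\|_\infty \leq 1$. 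For \eqref{cond_3}, the main estimate is $|\lambda_i - \lambda_j|$ for adjacent $S_i, S_j$: writing $a_i = d_1(S_1,S_i)$, $a_j = d_1(S_1,S_j)$ with $|a_i - a_j| \leq 1$, and $g(s) = \delta\!\left(\frac{1+d_1(S_1,S_2)}{1+s}\right) s$, one estimates $|g(a_i) - g(a_j)|$ via the mean value theorem using boundedness of $\delta$ and $\delta'$; one checks $|g'(s)| \lesssim 1$ uniformly (the factor $s$ is multiplied by $\delta'$ of an argument that is $\approx 1$ only on a range of $s$ comparable to $1+d_1(S_1,S_2)$, and there $\frac{s}{1+d_1(S_1,S_2)} \lesssim 1$ while the chain-rule derivative of the inner argument contributes $\frac{1+d_1(S_1,S_2)}{(1+s)^2} \lesssim \frac{1}{1+d_1(S_1,S_2)}$, so the product is $O(1)$), giving $|\lambda_i - \lambda_j| \leq C$ with $C$ absolute; then take $\delta_2(t) = \min(C, C'(1+d_1(S_1,S_2))\, t)$ so that $\delta_2\!\left(\frac1{\mu_i}+\frac1{\mu_j}\right) \geq \delta_2\!\left(\frac1{\mu_i}\right)$ dominates $|\lambda_i-\lambda_j|$ — here I use that on the support of $\lambda_i$ or $\lambda_j$ we have $\mu_i \lesssim 1+d_1(S_1,S_2)$, so $\frac1{\mu_i}+\frac1{\mu_j} \gtrsim \frac1{1+d_1(S_1,S_2)}$, making the piecewise-linear part of $\delta_2$ reach its cap $C$; thus $\|\delta_2\|_\infty \leq C$ absolute.

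Having checked all four hypotheses with $\|\delta_1\|_\infty, \|\delta_2\|_\infty \leq C$ absolute, Proposition~\ref{main_lemma_1} yields $\phi_{S_1,S_2} \in \VMO(\cO)$ where $\cO = (\bigcup_{S_i\in E} S_i)^{\rm o}$; since the Whitney cubes tile $\Omega$ and $\bOmega$ has measure zero, $\cO$ and $\Omega$ agree up to a null set, so $\phi_{S_1,S_2} \in \VMO(\Omega)$ with $\|\phi_{S_1,S_2}\|_\BMO \leq C(\|\delta_1\|_\infty + \|\delta_2\|_\infty) \leq C$, uniformly in $S_1, S_2$.

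I expect the main obstacle to be the careful bookkeeping in verifying \eqref{cond_3}: one must simultaneously control the Lipschitz-type increment $|\lambda_i - \lambda_j|$ \emph{and} arrange the modulus of continuity $\delta_2$ so that its argument $\frac1{\mu_i}+\frac1{\mu_j}$ is large enough — this works precisely because $\lambda_i$ is supported where $d_1(S_1,S_i) \lesssim d_1(S_1,S_2)$, the same truncation that makes \eqref{cond_2} work, so the two conditions must be handled in a coordinated way rather than independently. The derivative bound for $g$ requires splitting the range of $s$ according to whether the inner argument of $\delta$ lies in $[1/2, 1]$ (where $\delta$ is genuinely varying) or outside (where $\delta$ is constant and $g$ is either $0$ or linear with slope $1$), which is slightly delicate but routine. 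Everything else — \eqref{cond_0}, \eqref{cond_1}, \eqref{cond_2} — is essentially immediate from the triangle inequality for $d_1$ and the definition of $\delta$.
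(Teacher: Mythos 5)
Your proposal is correct and takes essentially the same approach as the paper: verify conditions \eqref{cond_0}--\eqref{cond_3} of Proposition~\ref{main_lemma_1} directly from the combinatorics of the chain distance $d_1$ and the properties of $\delta$, without invoking the Jones condition, so that all moduli involved have sup-norms bounded by absolute constants. One small algebraic slip in your check of \eqref{cond_0}: the chain $\log(\ell(S_1)/\ell(S_i)) \geq \log(\ell(S_1)/L) = -\log(L/\ell(S_i)) + \log(\ell(S_1))$ drops the needed absolute value and the final equality is false; the intended estimate, as in the paper, is $d_1(S_1,S_i) \geq \tfrac12\left|\log\tfrac{\ell(S_1)}{\ell(S_i)}\right| \geq \tfrac12\log\tfrac{L}{\ell(S_i)} - \tfrac12\log\tfrac{L}{\ell(S_1)}$, which yields \eqref{cond_0} with $C_1=\tfrac12$ and $C_2$ depending on $S_1$ but harmlessly, since only $\|\delta_1\|_\infty,\|\delta_2\|_\infty$ enter the final bound.
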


\begin{proof}
By Proposition~\ref{main_lemma_1}, it suffices to check that conditions (\ref{cond_0})-(\ref{cond_3}) hold.
Note that by the nature of Whitney chains and property (\ref{eqn-adjacent}) of Whitney cubes, if $d_1(S_1,S_i)\leq \alpha$, then
$$4^{-\alpha} \leq \frac{\ell(S_1)}{\ell(S_i)}\leq 4^\alpha.
$$
Hence
$$
d_1(S_1,S_i) \geq \frac{1}{2} \left|\log \frac{\ell(S_1)}{\ell(S_i)} \right| \geq \frac{1}{2}\log \frac{L}{\ell(S_i)} - \frac{1}{2}\log \frac{L}{\ell(S_1)}
$$
 and condition (\ref{cond_0}) holds. Condition (\ref{cond_1}) holds trivially with $C=1$ and (\ref{cond_2}) is true by construction with $\|\delta_1\|_{\infty} = \|\delta\|_\infty$ as
$$
 \frac{|\lambda_i|}{\mu_i} \leq \delta \left(\frac{1+d_1(S_1,S_2)}{1+d_1(S_1,S_i)}\right).
$$
Finally, if $S_i$ and $S_j$ are adjacent cubes, set $A = 1+d_1(S_1,S_2)$, $B_i = d_1(S_1,S_i)$ and $B_j = d_1(S_1,S_j)$.  Then $B_i - B_j = \pm 1$ so assume, without loss of generality,
that $B_i - B_j = 1$ and write
\begin{eqnarray}
\nonumber
|\lambda_i-\lambda_j| & = &  \left|\delta \left(\frac{A}{1+B_i}\right)B_i  -  \delta \left(\frac{A}{1+B_j}\right)B_j\right|\\
& \leq &  \delta \left(\frac{A}{1+B_i}\right)\ + \left|\delta \left(\frac{A}{1+B_i}\right) -  \delta \left(\frac{A}{1+B_j}\right)\right|B_j.
\label{temp_00_01}
\end{eqnarray}
Applying the mean value theorem, we get that
\begin{eqnarray*}
\left| \delta \left(\frac{A}{1+B_i}\right) - \delta \left(\frac{A}{1+B_j}\right) \right|
& = & \left| \delta \left(\frac{A}{1+B_i}\right) - \delta \left(\frac{A}{1+B_i} \left[1+ \frac{B_i-B_j}{1+B_j} \right]\right) \right|\\
& = &  \left|\delta' \left(\frac{A}{1+B_i} \cdot \left[1+\frac{\theta }{1+B_j} \right] \right) \right| \frac{A}{(1+B_i)(1+B_j)}
\end{eqnarray*}
for some $\theta \in (0,1)$,
and therefore
$$
\left| \delta \left(\frac{A}{1+B_i}\right) - \delta \left(\frac{A}{1+B_j}\right) \right| B_i \leq \sup_{t\in [1,2]} \left|\delta' \left(\frac{t A}{1+B_i}  \right) \right| \frac{A}{(1+B_i)}.
$$
Plugging back into (\ref{temp_00_01}), we have
\begin{eqnarray*}
 |\lambda_i-\lambda_j| & \leq & \delta \left(\frac{1+d_1(S_1,S_2)}{1+d_1(S_1,S_i)}\right) \\
&   &+ \sup_{t\in [1,2]} \left|\delta' \left(\frac{t (1+d_1(S_1,S_2))}{1+d_1(S_1,S_i)}  \right) \right| \frac{1+d_1(S_1,S_2)}{(1+d_1(S_1,S_i))}
\end{eqnarray*}
 As $\supp \delta' \subset [1/2,1]$, the second term will vanish when $\mu_i = 1+d_1(S_1,S_i)$ is sufficiently large,
 so we can say that there exists a modulus of continuity $\delta_2$ (see Remark~\ref{rmk-moc})
 such that $|\lambda_i-\lambda_j|\leq \delta_2(\frac{1}{\mu_i})$
and
$$ \|\delta_2\|\leq \|\delta\|_\infty + 2 \|\delta'\|_\infty.
$$
Thus by Proposition~\ref{main_lemma_1}, $\phi_{S_1,S_2}\in \VMO(\Omega)$ with
$$
\|\phi_{S_1,S_2}\|_\BMO \leq C(\|\delta_1\|_\infty + \|\delta_2\|_\infty),
$$
and the right hand side is independent of $S_1$ and $S_2$.
\end{proof}

Now we can prove the part (ii) of the main theorem.

Recall that $\psimuSi = 1$ on $J({S}_i)$, the cube concentric with $S_i$ and of sidelength $\ell(S_i)/4$. In particular, for the function $\phi_{S_1,S_2}$ defined above we have
$$
|(\phi_{S_1,S_2})_{J({S}_1)} - (\phi_{S_1,S_2})_{J({S}_2)}| = |\lambda_1 - \lambda_2| = d_1(S_1,S_2)
$$
On the other hand by Lemma \ref{lem_for_th1} and Lemma 2.1 in \cite{Jones},
\begin{eqnarray*}
|(\phi_{S_1,S_2})_{J({S}_1)} - (\phi_{S_1,S_2})_{J({S}_2)}| & = & |(\Lambda \phi_{S_1,S_2})_{J({S}_1)} - (\Lambda \phi_{S_1,S_2})_{J({S}_2)}| \\
& \leq &
 C \|\Lambda \phi_{S_1,S_2} \|_{\BMO(\Rn)} d_2(J({S}_1),J({S}_2)) \\
 & \leq &C' d_2(S_1,S_2).
\end{eqnarray*}
Hence for some $C'>0$ and any $S_1,S_2$,
$$d_1(S_1,S_2) \leq C' d_2(S_1,S_2).$$

\section{Proof of Proposition~\ref{main_lemma_1}}
The goal of this section is to prove Proposition~\ref{main_lemma_1}.
 In view of Lemma \ref{lem_gluing_dyadic}, we need to show that conditions (\ref{cond_0})-(\ref{cond_3}), imply (\ref{cond_gluing_1}) and (\ref{cond_gluing_2}).
First we note the following general fact.

\begin{lem}
 \label{lem_product_of_moduli}
 Let $\eta_1$ and $\eta_2$ be moduli of continuity. If $\mu_i$ satisfies (\ref{cond_0}), then there exists a modulus $\eta_3$ such that
$$
\eta_1\left(\frac{1}{\mu_i}\right) \cdot \eta_2\left( \frac{2^{\mu_i}}{\ell(S_i)}\cdot t \right) \leq \eta_3(t),
$$
with $\|\eta_3\|_\infty \leq \|\eta_1\|_\infty \cdot \|\eta_2\|_\infty$.
 \end{lem}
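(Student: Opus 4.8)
The plan is to use hypothesis (\ref{cond_0}) to replace the factor $2^{\mu_i}/\ell(S_i)$ by a pure exponential in $\mu_i$, and then to define $\eta_3$ as a supremum, over all admissible exponents, of the quantity on the left.

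First I would rewrite (\ref{cond_0}) as $\log(L/\ell(S_i)) \leq (\mu_i+C_2)/C_1$, which gives $2^{\mu_i}/\ell(S_i) \leq b\,2^{a\mu_i}$ with $a := 1+1/C_1 > 1$ and $b := 2^{C_2/C_1}/L > 0$, both depending only on $C_1$, $C_2$, $L$. Since moduli of continuity are nondecreasing (Definition~\ref{def-moc}), this yields
$$\eta_1\!\left(\frac1{\mu_i}\right)\eta_2\!\left(\frac{2^{\mu_i}}{\ell(S_i)}\,t\right) \;\leq\; \eta_1\!\left(\frac1{\mu_i}\right)\eta_2\!\left(b\,2^{a\mu_i}\,t\right),\qquad t\geq 0.$$
Recalling that $\mu_i \geq 1$ for all $i$, I then set
$$\eta_3(t) := \sup_{\mu\geq 1}\;\eta_1\!\left(\frac1\mu\right)\eta_2\!\left(b\,2^{a\mu}\,t\right),$$
so that the displayed inequality immediately gives $\eta_1(1/\mu_i)\,\eta_2(2^{\mu_i}t/\ell(S_i)) \leq \eta_3(t)$ for every $i$.

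It then remains to verify that $\eta_3$ is a modulus of continuity with $\|\eta_3\|_\infty \leq \|\eta_1\|_\infty\|\eta_2\|_\infty$. The norm bound is immediate, since every term in the supremum is at most $\|\eta_1\|_\infty\|\eta_2\|_\infty$; likewise $\eta_3(0)=0$ because $\eta_2(0)=0$; and $\eta_3$ is nondecreasing because for each fixed $\mu$ the function $t\mapsto \eta_2(b\,2^{a\mu}t)$ is nondecreasing, so $\eta_3$ is a supremum of nondecreasing functions.

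The one point that needs a short argument, and the only place I anticipate any difficulty, is continuity of $\eta_3$ at $0$, which I would handle by splitting the supremum at a threshold $M$. Given $\varepsilon>0$, continuity of $\eta_1$ at $0$ provides $M\geq 1$ with $\eta_1(1/\mu) \leq \varepsilon/(1+\|\eta_2\|_\infty)$ for all $\mu > M$, so all such terms are $\leq\varepsilon$ regardless of $t$. For $1\leq\mu\leq M$ I bound $\eta_2(b\,2^{a\mu}t) \leq \eta_2(b\,2^{aM}t)$, and continuity of $\eta_2$ at $0$ gives $\tau>0$ with $\eta_2(s) \leq \varepsilon/(1+\|\eta_1\|_\infty)$ for $s<\tau$; hence these terms are $\leq\varepsilon$ once $t < \tau\,b^{-1}2^{-aM}$. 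Taking the supremum over $\mu\geq 1$ shows $\eta_3(t)\leq\varepsilon$ for all sufficiently small $t$, which is the required continuity at $0$. The key is simply to choose $M$ from $\eta_1$ first, so that it does not depend on $t$, and only then choose $\tau$ from $\eta_2$ in terms of $M$.
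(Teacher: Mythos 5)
Your proof is correct, and it takes a genuinely different route from the paper's. The paper constructs $\eta_3$ explicitly by a two-case split pivoting on the threshold $t^{-1/2}$: when $2^{\mu_i}/\ell(S_i)\le t^{-1/2}$, the product is crudely bounded by $\|\eta_1\|_\infty\,\eta_2(t^{1/2})$; when the reverse holds, (\ref{cond_0}) is inverted via an explicit strictly increasing function $g$ so that $\mu_i\ge g^{-1}(t^{-1/2})$, giving the bound $\|\eta_2\|_\infty\,\eta_1(1/g^{-1}(t^{-1/2}))$, and $\eta_3$ is (up to a final smoothing via Remark~\ref{rmk-moc}) the max of the two expressions. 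You instead fold all of this into the abstract definition $\eta_3(t)=\sup_{\mu\ge1}\eta_1(1/\mu)\,\eta_2(b\,2^{a\mu}t)$ and verify the modulus-of-continuity axioms directly; the case split in the paper essentially reappears in your continuity-at-zero argument as the threshold $M$ separating large $\mu$ (where $\eta_1$ is small, independent of $t$) from bounded $\mu$ (where $\eta_2$ is made small by taking $t$ small). Your argument is arguably cleaner in that it yields $\eta_3$ in closed form with the norm bound, monotonicity, and $\eta_3(0)=0$ all falling out immediately, whereas the paper's concrete bound $\max(\|\eta_1\|_\infty\eta_2(t^{1/2}),\|\eta_2\|_\infty\eta_1(1/g^{-1}(t^{-1/2})))$ gives a more explicit rate but still needs Remark~\ref{rmk-moc} to be turned into a genuine (nondecreasing) modulus. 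One small presentational note: you should state that you are using $\mu_i\ge1$ from the hypotheses of Proposition~\ref{main_lemma_1} so that the supremum over $\mu\ge1$ covers all admissible values; you do mention it, but it is load-bearing (without a lower bound on $\mu$, the supremum could be infinite or undefined as a modulus).
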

 \begin{proof}
Let us consider two cases:

\noindent
\textit{Case 1:} If $\frac{2^{\mu_i}}{\ell(S_i)} \leq t^{-1/2}$ then
$$
\eta_1\left(\frac{1}{\mu_i}\right) \cdot \eta_2\left( \frac{2^{\mu_i}}{\ell(S_i)}\cdot t \right) \leq \|\eta_1\|_\infty  \cdot \eta_2(t^{1/2})
$$

\noindent
\textit{Case 2:} If $\frac{2^{\mu_i}}{\ell(S_i)} > t^{-1/2}$, then by (\ref{cond_0})
$$
t^{-1/2} < \frac{2^{\mu_i}}{\ell(S_i)} \leq 2^{\mu_i} \cdot \frac{1}{L} \cdot 2^{C_2/C_1} 2^{\mu_i/C_1} = C_L \cdot 2^{(1+1/C_1)\mu_i}
$$
If we introduce function $g(x)= C_L 2^{(1+1/C_1)x}$, then $g$ is a strictly increasing function, hence invertible, with $g(x) \ra \infty$ as $x \ra \infty$, and the above can be written as
$$
    t^{-1/2} \leq g(\mu_i)
$$
    or
$$
    g^{-1}(t^{-1/2}) \leq \mu_i.
$$
    Therefore
$$
    \eta_1\left(\frac{1}{\mu_i}\right) \eta_2\left( \frac{2^{\mu_i}}{\ell(S_i)}\cdot t \right) \leq \|\eta_2\|_\infty \cdot \eta_1 \left( \frac{1}{g^{-1}(t^{-1/2})}\right)
$$
and all in all,
$$
    \eta_1\left(\frac{1}{\mu_i}\right) \eta_2\left( \frac{2^{\mu_i}}{\ell(S_i)}\cdot t \right) \leq \max\left( \|\eta_1\| \cdot \eta_2(t^{1/2}), \|\eta_2\|_{\infty} \eta_1 \left( \frac{1}{g^{-1}(t^{-1/2})}\right) \right).
$$
Since $g^{-1}(t^{-1/2}) \ra \infty$ as $t \ra 0$, the right-hand-side can be dominated (see Remark~\ref{rmk-moc})
by a modulus of continuity  $\eta_3$ with $\|\eta_3\|_\infty \leq \|\eta_1\|_\infty \cdot \|\eta_2\|_\infty$.
\end{proof}

\begin{lem}
\label{lem_uniform_modulus}
If $\mu_i$, $\lambda_i$ are such that (\ref{cond_2}) holds, then there exists a modulus
$\eta$ with $\|\eta\|_\infty \leq C\|\delta_1\|_\infty$ such that for all $t$,
$$
\omega_{S_i}(\lambda_i \psi^{\mu_i}_{S_i},t) \leq \eta(t).
$$
\end{lem}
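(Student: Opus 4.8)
The plan is to reduce the statement to the two facts already established: Lemma~\ref{lem_bump_modulus}, which controls $\omega_{S_i}(\psi^{\mu_i}_{S_i},\cdot)$ by the fixed modulus $\theta$, and Lemma~\ref{lem_product_of_moduli}, which absorbs the resulting product of two moduli into a single one. I will work throughout with the double-integral form (\ref{eqn-newmodulus}) of the modulus of mean oscillation, which is positively homogeneous in the function, so that
$$
\omega_{S_i}(\lambda_i \psi^{\mu_i}_{S_i},t) = |\lambda_i|\,\omega_{S_i}(\psi^{\mu_i}_{S_i},t).
$$

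First I would apply Lemma~\ref{lem_bump_modulus} to each cube $S_i$, which gives
$$
\omega_{S_i}(\lambda_i \psi^{\mu_i}_{S_i},t) \leq \frac{|\lambda_i|}{\mu_i}\,\theta\!\left(2^{\mu_i}\frac{t}{\ell(S_i)}\right),
$$
and then invoke hypothesis (\ref{cond_2}) to replace $|\lambda_i|/\mu_i$ by $\delta_1(1/\mu_i)$, obtaining
$$
\omega_{S_i}(\lambda_i \psi^{\mu_i}_{S_i},t) \leq \delta_1\!\left(\frac{1}{\mu_i}\right)\theta\!\left(2^{\mu_i}\frac{t}{\ell(S_i)}\right).
$$
The right-hand side is now exactly of the form treated by Lemma~\ref{lem_product_of_moduli} with $\eta_1=\delta_1$ and $\eta_2=\theta$. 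Since we are in the setting of Proposition~\ref{main_lemma_1}, where the $\mu_i$ and $\ell(S_i)$ satisfy (\ref{cond_0}), that lemma produces a single modulus of continuity $\eta$ with $\delta_1(1/\mu_i)\,\theta(2^{\mu_i}t/\ell(S_i)) \leq \eta(t)$ for every $i$ and with $\|\eta\|_\infty \leq \|\delta_1\|_\infty\,\|\theta\|_\infty$. As $\theta$ is a fixed modulus of continuity depending only on $n$, the quantity $\|\theta\|_\infty$ is a dimensional constant, so $\|\eta\|_\infty \leq C\|\delta_1\|_\infty$, which is the asserted bound.

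There is essentially no obstacle beyond bookkeeping; the one point worth flagging is that the conclusion genuinely relies on condition (\ref{cond_0}) through Lemma~\ref{lem_product_of_moduli}: without a lower bound on $\mu_i$ in terms of $\log(L/\ell(S_i))$ one cannot control the argument $2^{\mu_i}t/\ell(S_i)$ of $\theta$ uniformly in $i$, and the $\omega_{S_i}(\lambda_i\psi^{\mu_i}_{S_i},\cdot)$ need not be dominated by a common modulus. Hence the lemma is to be read within the hypotheses of Proposition~\ref{main_lemma_1}, where (\ref{cond_0}) is in force.
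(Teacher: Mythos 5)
Your proposal is correct and follows exactly the paper's own argument: apply Lemma~\ref{lem_bump_modulus}, then hypothesis (\ref{cond_2}), then Lemma~\ref{lem_product_of_moduli}, with $\eta_1=\delta_1$ and $\eta_2=\theta$. The additional remark that (\ref{cond_0}) is implicitly in force (since the lemma is stated within the proof of Proposition~\ref{main_lemma_1}) is a fair point worth noting, but the route and the constants are the same.
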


\begin{proof}
By Lemma \ref{lem_bump_modulus} and (\ref{cond_2}),  we have
$$
\omega_{S_i}(\lambda_i \psi^{\mu_i}_{S_i},t)\leq \frac{|\lambda_i|}{\mu_i} \cdot \theta\left(\frac{2^{\mu_i} }{\ell(S_i)} t\right) \leq \delta_1\left(\frac{1}{\mu_i}\right) \cdot \theta\left(\frac{2^{\mu_i} }{\ell(S_i)} t\right).
$$
Then by Lemma \ref{lem_product_of_moduli}, there is a modulus of continuity
$\eta$  with
$\|\eta\|_\infty \leq \|\delta_1\|_\infty\|\theta\|_\infty \leq C\|\delta_1\|_\infty$  such that for all $t$,
$$
\omega_{S_i}(\lambda_i \psi^{\mu_i}_{S_i},t)\leq \eta(t).
 $$
 \end{proof}

Now that we have shown  (\ref{cond_gluing_1}), in order to apply Lemma~\ref{lem_gluing_dyadic} to prove the Proposition
it remains to show that (\ref{cond_gluing_2}) is implied by (\ref{cond_0})-(\ref{cond_3}).   We first establish some preliminary results.

 \begin{lem}
 \label{lem_special_of_claim}
Write $x \in \Rn$ as $(x_1,x_2)$ where $x_2$ denotes a vector in $\R^{n-1}$.
Suppose $S_1,S_2$ are adjacent cubes such that
$$
 S_1\subset \{(x_1,x_2): x_1\leq 0, |x_2|\leq \ell(S_1)/2\},
 $$
$$
S_2\subset \{(x_1,x_2): x_1\geq 0, |x_2|\leq \ell(S_2)/2\}
$$
 and consider adjacent subcubes $Q_1, Q_2$, $Q_i \subset S_i$,  satisfying
 $$
Q_1\subset\{(x_1,x_2):\dist(x,\partial S_1) = |x_1|, \  x_1\geq -\ell(S_1)/4\},
$$
$$
    Q_2\subset\{(x_1,x_2): \dist(x,\partial S_2) = x_1,\  x_1\leq \ell(S_2)/4\},
 $$
    and
$$
 \frac{\ell(Q_i)}{\ell(S_i)}=\frac{\ell(Q_j)}{\ell(S_j)}.
$$
 If $\psi_{S_i} = \psi^{\mu_i}_{S_i}$ for $i=1,2$, where $\mu_i$ satisfy (\ref{cond_1}),
then for some $C>0$
  \begin{equation}
\label{temp_2018}
 |(\psi_{S_1})_{Q_1} - ( \psi_{S_2})_{Q_2} | \leq \frac{C}{\max \mu_i}.
\end{equation}
\end{lem}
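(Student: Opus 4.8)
The plan is to exploit the very special geometry to collapse both averages into the \emph{same} one-dimensional integral, depending only on the parameter $\mu_i$, and then to show that this integral varies slowly in $\mu$.

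First I would observe that on $Q_1$ the bump function depends only on the first coordinate. By hypothesis $\dist(x,\partial S_1)=|x_1|$ and $-\ell(S_1)/4\le x_1\le 0$ throughout $Q_1$, so $\dist(x,\partial S_1)\le\ell(S_1)/4$ and hence $\psi_{S_1}(x)=\bigl(1-\tfrac1{\mu_1}\log\tfrac{\ell(S_1)}{4|x_1|}\bigr)_+$ on $Q_1$, and similarly $\psi_{S_2}(x)=\bigl(1-\tfrac1{\mu_2}\log\tfrac{\ell(S_2)}{4x_1}\bigr)_+$ on $Q_2$. Since $Q_1\subset\{x_1\le0\}$, $Q_2\subset\{x_1\ge0\}$ and $Q_1$ touches $Q_2$, both cubes abut the hyperplane $\{x_1=0\}$, so $Q_1=[-\ell(Q_1),0]\times R_1$ and $Q_2=[0,\ell(Q_2)]\times R_2$ for some $(n-1)$-dimensional cubes $R_1,R_2$. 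Integrating out the $x_2$-variables and rescaling the $x_1$-variable by $\ell(S_i)$, and writing $\rho:=\ell(Q_1)/\ell(S_1)=\ell(Q_2)/\ell(S_2)\le\tfrac14$, I would then obtain
$$
(\psi_{S_i})_{Q_i}=F(\mu_i),\qquad
F(\mu):=\frac1\rho\int_0^\rho\Bigl(1-\frac1\mu\log\frac1{4u}\Bigr)_+\,du ,
$$
with \emph{the same} $\rho$ for $i=1,2$, so that $(\psi_{S_1})_{Q_1}-(\psi_{S_2})_{Q_2}=F(\mu_1)-F(\mu_2)$.

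Next I would estimate $F(\mu_1)-F(\mu_2)$. Setting $v(u):=\log\tfrac1{4u}\ge0$ (using $u\le\rho\le\tfrac14$), a routine computation differentiating under the integral sign gives $\partial_\mu F(\mu)=\tfrac1\rho\int_0^\rho \tfrac{v(u)}{\mu^2}\,\mathbf 1_{\{v(u)<\mu\}}\,du$, whose integrand lies in $[0,1/\mu]$; hence $0\le\partial_\mu F(\mu)\le 1/\mu$. By the mean value theorem and condition (\ref{cond_1}) this yields $|F(\mu_1)-F(\mu_2)|\le |\mu_1-\mu_2|/\min(\mu_1,\mu_2)\le C/\min(\mu_1,\mu_2)$, and since $\mu_i\ge1$ and $|\mu_1-\mu_2|\le C$ we have $\min(\mu_1,\mu_2)\ge\max(\mu_1,\mu_2)/(1+C)$, so finally $|F(\mu_1)-F(\mu_2)|\le C(1+C)/\max(\mu_1,\mu_2)$, which is (\ref{temp_2018}).

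I expect the only delicate point to be the reduction in the first step, i.e.\ checking that the prescribed containments really force $Q_1,Q_2$ to meet the interface $\{x_1=0\}$ and that the two averages then collapse to the identical one-parameter integral $F(\mu_i)$; everything after that is an elementary one-variable estimate. One could also avoid differentiating $F$ and bound $|F(\mu_1)-F(\mu_2)|\le\tfrac1\rho\int_0^\rho|\psi_1(u)-\psi_2(u)|\,du$ directly, using that $t\mapsto t_+$ is $1$-Lipschitz and that both integrands vanish for $u<2^{-\max(\mu_1,\mu_2)-2}$, but passing through $\partial_\mu F$ is cleaner.
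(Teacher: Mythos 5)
Your proof is correct and follows the paper's overall strategy: in both, the cubes $Q_i$ are positioned so that $\psi_{S_i}$ depends only on $x_1$ there, the $x_2$-variables are integrated out, and after rescaling both averages collapse to the \emph{same} one-dimensional integral (your $F(\mu_i)$, the paper's $\frac{1}{l}\int_0^l(1+\frac{1}{\mu_i}\log t)_+\,dt$ with $l=4\rho$), so everything hinges on comparing this expression at $\mu_1$ versus $\mu_2$. Where you diverge is in that comparison. The paper proceeds by a direct case analysis on the relative sizes of $l$, $2^{-\mu_1}$, $2^{-\mu_2}$, computing and bounding the integral in each regime. You instead differentiate $F$ in $\mu$, observe $0\le\partial_\mu F(\mu)\le 1/\mu$ (since the integrand $\frac{v}{\mu^2}\mathbf{1}_{\{v<\mu\}}$ is at most $1/\mu$), and invoke the mean value theorem together with condition (\ref{cond_1}) and $\mu_i\ge1$ to conclude. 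This is a genuinely different and arguably cleaner estimation step: it replaces two-case bookkeeping with a single uniform Lipschitz bound on $F$, and it makes transparent why the bound scales like $|\mu_1-\mu_2|/\mu$. The paper's case analysis is more elementary (no differentiation under the integral sign, though your dominated-derivative justification is routine since the integrand is piecewise $C^1$ with uniformly bounded slope). Both deliver the stated estimate.
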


 \begin{proof}
Since on $Q_i$,  $i=1,2$, $d(x,\partial S_i) = |x_1|$,
the functions $\psi_{S_i}(x)$ are functions of the variable $x_1$ only and
 $$
\psi_{S_i}(x) = \left(1+ \frac{1}{\mu_i} \log\left(\frac{4|x_1|}{\ell(S_i)}\right) \right)_+ , \quad x \in Q_i.
$$
Then for $i = 1,2$
\begin{eqnarray*}
(\psi_{S_i})_{Q_i} & =  & \fint_{Q_i} \left(1 + \frac{1}{\mu_i} \log\left(\frac{4|x_1|}{\ell(S_i)}\right)\right)_+ dx_1 dx_2\\
& =  & \frac{1}{\ell(Q_i)} \int_0^{\ell(Q_i)} \left(1 + \frac{1}{\mu_i} \log\left(\frac{4|x_1|}{\ell(S_i)}\right)\right)_+ dx_1\\
& =  & \frac{\ell(S_i)}{4 \ell(Q_i)} \int_0^{\frac{4 \ell(Q_i)}{\ell(S_i)}} \left(1+ \frac{1}{\mu_i} \log t \right)_+ dt.
\end{eqnarray*}
Let $l = \frac{4 \ell(Q_i)}{\ell(S_i)}$, so we can write
$$
|(\psi_{S_1})_{Q_1}-(\psi_{S_2})_{Q_2}| = \frac{1}{l} \int_0^{l} \left(1+ \frac{1}{\mu_1} \log t  \right)_+  - \left(1+ \frac{1}{\mu_2} \log t  \right)_+ dt =: I.
 $$
Without loss of generality we can assume that $\mu_2\leq \mu_1$. Then by condition (\ref{cond_1})
$$
 \mu_2\leq \mu_1\leq \mu_2+C
$$
and $2^{-\mu_1}$ and $2^{-\mu_2}$ are comparable.
If $l<2^{-\mu_1}$, then $I=0$ and there is nothing to prove. Otherwise consider two cases:

\noindent
\textit{Case 1:} If $2^{-\mu_1} \leq l\leq 2^{-\mu_2}$, then $l$ is comparable to $2^{-\mu_1}$ and $2^{-\mu_2}$ so
\begin{eqnarray*}
 |I| =\frac{1}{l} \int_0^{l} \left(1+ \frac{1}{\mu_1} \log t  \right)_+  dt & \leq&  \frac{1}{l} \int_{2^{-\mu_1}}^{2^{-\mu_2}} \left(1+ \frac{1}{\mu_1} \log t  \right)  dt \\
 & \leq&     \frac{C}{\mu_1} (\mu_1-\mu_2) \leq \frac{C'}{\mu_1}.
\end{eqnarray*}

\noindent
\textit{Case 2:} If $l> 2^{-\mu_2}$, then
  $$
    I=\frac{1}{l} \int_{0}^{2^{-\mu_2}} \left(1+ \frac{1}{\mu_1} \log t  \right)_+  dt +  \left( \frac{1}{\mu_1} - \frac{1}{\mu_2} \right) \frac{1}{l} \int_{2^{-\mu_2}}^{l}  \log t \ dt.
 $$
    As in the previous case, the absolute value of the first term is bounded by $\frac{C}{\mu_1}$. For the second term, we have
$$
    \left|\left( \frac{1}{\mu_1} - \frac{1}{\mu_2} \right) \frac{1}{l} \int_{2^{-\mu_2}}^{l}  \log t \ dt \right| =  \frac{\mu_1-\mu_2}{\mu_1 \mu_2}  \left|\log l -1 + \frac{1+\mu_2}{l 2^{\mu_2}} \right| \leq \frac{C }{\mu_1}.$$
\end{proof}

\begin{lem}
\label{lem_key_one}
Let conditions (\ref{cond_0})-(\ref{cond_3}) hold. Then there exists a modulus of continuity
$\eta$
such that for all adjacent Whitney cubes $S_i$, $S_j$ and $Q_i$ and $Q_j$ touching
subcubes with $Q_i\subset S_i$, $Q_j\subset S_j$,
 $\ell(Q_i),\ell(Q_j)\leq t$ and
\begin{equation}
\label{eqn-proportional}
\frac{\ell(Q_i)}{\ell(S_i)}=\frac{\ell(Q_j)}{\ell(S_j)},
\end{equation}
we have
\begin{equation}
\label{temp_to_show_1}
|\lambda_i(\psi^{\mu_i}_{S_i})_{Q_i} - \lambda_j( \psi^{\mu_j}_{S_j})_{Q_j} | \leq \eta(t).
\end{equation}
Moreover,
\begin{equation}
\label{eqn-Linfty}
\|\eta\|_\infty \leq  C(\|\delta_1\|_\infty + \|\delta_2\|_\infty)
\end{equation}
with $C$
depending on the constants in \eqref{cond_0} and \eqref{cond_1}.
\end{lem}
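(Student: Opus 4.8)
The plan is to estimate the left side of \eqref{temp_to_show_1} through the splitting
\begin{equation*}
\lambda_i(\psi^{\mu_i}_{S_i})_{Q_i} - \lambda_j(\psi^{\mu_j}_{S_j})_{Q_j}
= \lambda_i\bigl[(\psi^{\mu_i}_{S_i})_{Q_i} - (\psi^{\mu_j}_{S_j})_{Q_j}\bigr] + (\lambda_i-\lambda_j)\,(\psi^{\mu_j}_{S_j})_{Q_j},
\end{equation*}
controlling the two pieces by combining conditions \eqref{cond_0}--\eqref{cond_3} with some elementary geometry. Throughout write $\rho:=\ell(Q_i)/\ell(S_i)=\ell(Q_j)/\ell(S_j)$, and recall that $0\le\psi^\mu_S\le1$ and that $\psi^\mu_S$ is nondecreasing in $d(\cdot,\partial S)$. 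The first point is geometric: since $Q_i$ and $Q_j$ touch and $S_i,S_j$ are adjacent Whitney cubes, their common point lies in $\partial S_i\cap\partial S_j$, so $Q_i$ meets $\partial S_i$ and $Q_j$ meets $\partial S_j$; hence $d(x,\partial S_i)\le\diam(Q_i)$ for $x\in Q_i$, and similarly for $j$.

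Reading this against the definition of the bump functions yields, together with the mirror statements under $i\leftrightarrow j$: first, $(\psi^{\mu_i}_{S_i})_{Q_i}\neq0$ forces $\rho\gtrsim 2^{-\mu_i}$; second, $(\psi^{\mu_i}_{S_i})_{Q_i}=0$ forces $\rho\lesssim 2^{-\mu_i}$, hence (using \eqref{cond_1}) $(\psi^{\mu_j}_{S_j})_{Q_j}\le C/\mu_j$. Moreover \eqref{cond_0} gives $\ell(S_i)\gtrsim L\,2^{-\mu_i/C_1}$, so $\rho\lesssim t\,2^{\mu_i/C_1}/L$ whenever $\ell(Q_i)\le t$; combined with the first implication, this shows that if $(\psi^{\mu_i}_{S_i})_{Q_i}\neq0$ and $\ell(Q_i)\le t$ then $\mu_i\ge m(t)$, where $m(t)\gtrsim\log(L/t)\to\infty$ as $t\to0^+$, and then $\mu_j\ge m(t)-C$ by \eqref{cond_1}.

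Now to the two pieces. The second piece $(\lambda_i-\lambda_j)(\psi^{\mu_j}_{S_j})_{Q_j}$ vanishes if $(\psi^{\mu_j}_{S_j})_{Q_j}=0$; otherwise $\mu_i,\mu_j\ge m(t)-C$, so by \eqref{cond_3} and \eqref{cond_1} its absolute value is at most $\delta_2(1/\mu_i+1/\mu_j)\le\delta_2\!\left(C/(m(t)-C)\right)$ (and at most $\|\delta_2\|_\infty$ always). The first piece $\lambda_i\bigl[(\psi^{\mu_i}_{S_i})_{Q_i}-(\psi^{\mu_j}_{S_j})_{Q_j}\bigr]$ vanishes if the two averages coincide; if exactly one is zero, the implications above bound the nonzero average by $C/\mu_i$ while the index in question has $\mu\ge m(t)-C$, and since \eqref{cond_2} gives $|\lambda_i|\le\mu_i\,\delta_1(1/\mu_i)$, the piece is $\lesssim\delta_1\!\left(1/(m(t)-C)\right)$; if both averages are positive, then $\mu_i,\mu_j\ge m(t)$ and the general version of Lemma~\ref{lem_special_of_claim} established below gives $|(\psi^{\mu_i}_{S_i})_{Q_i}-(\psi^{\mu_j}_{S_j})_{Q_j}|\le C/\mu_i$, so again the piece is $\lesssim\delta_1\!\left(1/(m(t)-C)\right)$. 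Altogether the left side of \eqref{temp_to_show_1} is at most $\eta_0(t):=C\delta_1\!\left(1/(m(t)-C)\right)+\delta_2\!\left(C/(m(t)-C)\right)$ once $m(t)>C$, and at most $C(\|\delta_1\|_\infty+\|\delta_2\|_\infty)$ always. Since $m(t)\to\infty$ and $\delta_1,\delta_2$ are continuous at $0$ with $\delta_i(0)=0$, $\eta_0(t)\to0$ as $t\to0^+$; by Remark~\ref{rmk-moc} its nondecreasing envelope $\eta(t):=\sup_{s\le t}\eta_0(s)$ is a modulus of continuity dominating the left side of \eqref{temp_to_show_1}, with $\|\eta\|_\infty\le C(\|\delta_1\|_\infty+\|\delta_2\|_\infty)$ and $C$ depending only on the constants in \eqref{cond_0}, \eqref{cond_1} and the dimension, giving \eqref{eqn-Linfty}.

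The one genuine obstacle is the general version of Lemma~\ref{lem_special_of_claim}: $|(\psi^{\mu_i}_{S_i})_{Q_i}-(\psi^{\mu_j}_{S_j})_{Q_j}|\le C/\max(\mu_i,\mu_j)$ for \emph{arbitrary} adjacent Whitney cubes $S_i,S_j$ and touching subcubes with $\ell(Q_i)/\ell(S_i)=\ell(Q_j)/\ell(S_j)$, where now $Q_i$ may abut an edge or corner of $S_i$ rather than lie flush against a face, and $\ell(S_i),\ell(S_j)$ may differ (by a factor at most $4$, by \eqref{eqn-adjacent}). I would localize near the common point $p$ of $Q_i$ and $Q_j$: on a cube around $p$ of side $\approx\max(\ell(Q_i),\ell(Q_j))$ the sets $\partial S_i$, $\partial S_j$ are unions of pieces of coordinate hyperplanes through $p$, so on $Q_i$ one has $d(\cdot,\partial S_i)=\min_k d(\cdot,H_k)$ over the finitely many faces $H_k$ of $S_i$ adjacent to $p$, and correspondingly $\psi^{\mu_i}_{S_i}|_{Q_i}$ and $\psi^{\mu_j}_{S_j}|_{Q_j}$ are pointwise minima of single-face bump functions of the shape $(1+\tfrac1\mu\log\tfrac{4\,d(\cdot,H)}{\ell})_+$. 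Using that $u\mapsto(1+\tfrac1\mu\log u)_+$ is $\tfrac1\mu$-Lipschitz, that $\mu_i$ and $\mu_j$ are comparable (by \eqref{cond_1}) and $\ell(S_i)\approx\ell(S_j)$ (by \eqref{eqn-adjacent}), and that reflecting $Q_i$ across the common face preserves distance to that face, each single-face comparison reduces to the configuration of Lemma~\ref{lem_special_of_claim} up to an $O(1/\mu)$ error; since the difference of two minima is bounded by the largest of the differences of corresponding terms, assembling these yields the desired $O(1/\max\mu_i)$ bound. The borderline case $\rho\approx1$, in which $Q_i$ nearly fills $S_i$, is disposed of by a direct estimate using the dilation invariance of $\fint_S\psi^\mu_S$. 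The rest is routine bookkeeping with the moduli $\delta_1,\delta_2$ and the envelope $\eta$.
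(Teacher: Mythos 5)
Your treatment of the two triangle-inequality pieces is essentially the same as the paper's, just routed through a direct computation of a lower bound $m(t)$ on $\mu_i,\mu_j$ (from \eqref{cond_0} together with the observation that $(\psi^{\mu_i}_{S_i})_{Q_i}\neq 0$ forces $\ell(Q_i)/\ell(S_i)\gtrsim 2^{-\mu_i}$) instead of through the auxiliary Lemma~\ref{lem_product_of_moduli}. That part is correct and, if anything, a little more transparent than the paper's bookkeeping with products of moduli.

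The genuine gap is exactly where you flagged it: the extension of Lemma~\ref{lem_special_of_claim} to subcubes $Q_i,Q_j$ that are not in the frustum configuration. Your plan --- write $\psi^{\mu}_{S}|_{Q}=\min_k f_k$ as a pointwise minimum of single-face bump profiles, then invoke ``the difference of two minima is bounded by the largest of the differences of corresponding terms'' --- does not go through. That inequality is a \emph{pointwise} statement, and moreover requires a bijection between the two families of terms; here the families are indexed by the faces of $S_i$ and of $S_j$, which in general do not correspond (only the common face is shared; if $p$ is a vertex of $S_j$ but an interior point of a face of $S_i$ their counts differ). More fundamentally, what has to be compared is $\fint_{Q_i}\min_k f^i_k$ against $\fint_{Q_j}\min_k f^j_k$, and $\fint\min_k f_k$ is not $\min_k\fint f_k$; the reflection across the common face aligns only the shared-face term, and the residual terms are not small pointwise (the single-face profile is only $\tfrac1\mu$-Lipschitz in $\log d(\cdot,H)$, not in $d(\cdot,H)$). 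So ``assembling these'' does not yield the $O(1/\max\mu_i)$ bound without substantially more work. The paper sidesteps this entirely: it chooses proportional subcubes $Q_i'\subset Q_i$, $Q_j'\subset Q_j$ of ratio $c_n$ lying in the frustum, applies Lemma~\ref{lem_special_of_claim} to $Q_i',Q_j'$, and controls the two discrepancies $|(\psi^{\mu_i}_{S_i})_{Q_i}-(\psi^{\mu_i}_{S_i})_{Q_i'}|$ by combining Lemma~\ref{lem_d2_proper} with the modulus estimate of Lemma~\ref{lem_bump_modulus}, each costing only $O(1/\mu_i)$. You should replace your reduction step with that argument (or with a correct alternative); as written it is the one place the proposal is not a proof.
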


\begin{proof}  Let $S_i,S_j, S_i$ and $Q_j$ be as in the hypotheses.
Without loss of generality we may assume that $\mu_i\geq \mu_j$.
By the triangle inequality,
$$
|\lambda_i(\psi^{\mu_i}_{S_i})_{Q_i} - \lambda_j( \psi^{\mu_j}_{S_j})_{Q_j} |
\leq |\lambda_i-\lambda_j| |(\psi^{\mu_j}_{S_j})_{Q_j}|
+ |\lambda_i| |(\psi^{\mu_i}_{S_i})_{Q_i}-(\psi^{\mu_j}_{S_j})_{Q_j}|=: A + B.
$$
By condition (\ref{cond_3}) and our assumption, we can write
$$
A \leq |(\psi^{\mu_j}_{S_j})_{Q_j}| \delta_2\left(\frac{1}{\mu_i} + \frac{1}{\mu_j}\right)
\leq  |(\psi^{\mu_j}_{S_j})_{Q_j}|\delta_2\left(\frac{2}{\mu_j}\right)
$$
By the size and support properties of the bump function,
$|(\psi^{\mu_j}_{S_j})_{Q_j}| \leq 1$ and,
since $Q_j$ lies along one of the faces of $S_j$,
$|(\psi^{\mu_j}_{S_j})_{Q_j}| = 0$  if  $\ell(Q_j)\leq 2^{-\mu_j-2} \ell(S_j)$.
Letting $\eta_1(t) =  \delta_2(2t)$ and $\eta_2$ be
any modulus of continuity satisfying $\eta_2(s)=0$ for $s\leq 1/8$ and $\eta_2(s)=1$
for $s \geq 1/4$, we may write
$$A \leq \eta_1\left(\frac{1}{\mu_j}\right)
\eta_2\left(\frac{2^{\mu_j} \ell(Q_j)}{\ell(S_j)}\right).
$$
Hence by Lemma \ref{lem_product_of_moduli}, there exists $\eta_3$
such that $\|\eta_3\|_\infty \leq \|\delta_2\|_\infty$ and
\begin{equation}
\label{temp_212}
A\leq \eta_3(\ell(Q_j)) \leq  \eta_3(t).
\end{equation}

To deal with second term, apply condition (\ref{cond_2}) to get
$$
B = \frac{|\lambda_i|}{\mu_i}\; (\mu_i |(\psi^{\mu_i}_{S_i})_{Q_i}-(\psi^{\mu_j}_{S_j})_{Q_j}|)
 \leq  \delta_1\left( \frac{1}{\mu_i}\right)\;
\mu_i |(\psi^{\mu_i}_{S_i})_{Q_i}-(\psi^{\mu_j}_{S_j})_{Q_j}|.
$$
As above, since by our assumption $2^{-\mu_i} \leq 2^{-\mu_j}$, if
$$
\frac{\ell(Q_i)}{\ell(S_i)}=\frac{\ell(Q_j)}{\ell(S_j)} \leq 2^{-\mu_i-2},
$$
then the support properties of the bump functions give
$(\psi^{\mu_i}_{S_i})_{Q_i} = 0 = (\psi^{\mu_j}_{S_j})_{Q_j}$
and
$$\mu_i |(\psi^{\mu_i}_{S_i})_{Q_i}-(\psi^{\mu_j}_{S_j})_{Q_j}|.$$
Assuming this quantity is also
bounded, that is,  for some $C_0>0$
\begin{equation}
\label{temp_claim}
\mu_i |(\psi^{\mu_i}_{S_i})_{Q_i}-(\psi^{\mu_j}_{S_j})_{Q_j}|\leq C_0,
\end{equation}
we can apply the same reasoning as above to get a modulus of continuity $\tilde{\eta_2}$
with $\|\tilde{\eta_2}\|_\infty \leq C_0$ such that
$$
B \leq C_0 \delta_1\left( \frac{1}{\mu_i}\right) \; \eta\left( \frac{2^{\mu_i} \ell(Q_i)}{\ell(S_i)} \right).
$$
Then Lemma \ref{lem_product_of_moduli} gives a modulus $\tilde{\eta_3}$  with
$\|\tilde{\eta_3}\|_\infty \leq C_0 \|\delta_1\|_\infty$ and
$$B\leq \tilde{\eta_3}(\ell(Q_i))\leq \tilde{\eta_3}(t).
$$
Combining this with (\ref{temp_212}) and letting $\eta = \max(\eta_3, \tilde{\eta_3})$,
we can complete the proof, assuming that (\ref{temp_claim}) holds.

Note that estimate (\ref{temp_claim})
is established in the special setting of Lemma \ref{lem_special_of_claim}.
Thus all that remains is to show that we can always reduce to that setting.
We may always assume that $S_i\subset \{(x_1,x_2): x_1\leq 0, |x_2|\leq \ell(S_i)/2\}$,
$S_j\subset \{(x_1,x_2): x_1\geq 0, |x_2|\leq \ell(S_j)/2\}$. What needs to be shown
is that it may also be assumed that
$$
Q_i\subset\{(x_1,x_2): \dist(x,\partial S_i) = |x_1|, \  x_1\geq -\ell(S_i)/4\}
$$
and
$$
Q_j\subset\{(x_1,x_2): \dist(x,\partial S_j) = |x_1|,\  x_1\leq \ell(S_j)/4\}.
$$

By symmetry, the frustum $\{x \in S_i: \dist(x,\partial S_i) = |x_1|, x_1\geq -\ell(S_i)/4\}$
has $1/2n$ of the volume of $S_i\setminus J(S_i)$ and lies along the face $x_1 = 0$ with
angles bounded below (depending on the dimension),
so if $Q_i$ does not satisfy the inclusion above, some portion bounded below of it does, and
similarly for $Q_j$.  That is, there is $c_n\in (0,1)$ which depends on $n$ only such
that if at least one of the inclusions does not hold, then we can find cubes $Q_i'\subset Q_i$
and $Q_j'\subset Q_j$ with
$$\frac{\ell(Q'_i)}{\ell(Q_i)}=\frac{\ell(Q'_j)}{\ell(Q_j)}= c_n$$
for which the above conditions
are satisfied.  Write
$$
|(\psi^{\mu_i}_{S_i})_{Q_i} - ( \psi^{\mu_j}_{S_j})_{Q_j} |   \leq
 |(\psi^{\mu_i}_{S_i})_{Q_i}-(\psi_{S_i})^{\mu_i}_{Q'_i}| + |(\psi_{S_j})^{\mu_j}_{Q_j'}
-(\psi^{\mu_j}_{S_j})_{Q_j}|
+  |(\psi^{\mu_i}_{S_i})_{Q_i'}-(\psi^{\mu_j}_{S_j})_{Q_j'}|.
$$
Then by Lemma \ref{lem_d2_proper} and Lemma \ref{lem_bump_modulus}
$$
|(\psi^{\mu_i}_{S_i})_{Q_i}-(\psi^{\mu_i}_{S_i})_{Q'_i}|
\leq C \omega_{S_i}(\psi^{\mu_i}_{S_i},\ell(Q_i)) \leq \frac{C}{\mu_i}
$$
and similarly
$$
|(\psi_{S_j})^{\mu_j}_{Q_j'} -(\psi^{\mu_j}_{S_j})_{Q_j}|
\leq C \omega_{S_j}(\psi^{\mu_j}_{S_j}, \ell(Q_j)) \leq \frac{C}{\mu_j} \leq \frac{C}{\mu_i}
$$
by our assumption. Applying
Lemma \ref{lem_special_of_claim} to bound
$$|(\psi^{\mu_i}_{S_i})_{Q_i'}-(\psi^{\mu_j}_{S_j})_{Q_j'}| \leq
\frac{C}{\max (\mu_i, \mu_j)} = \frac{C}{\mu_i},$$
we have that (\ref{temp_claim}) holds.
\end{proof}

The proof of the Proposition via Lemma~\ref{lem_gluing_dyadic} will be complete once we verify the following.

\begin{lem}
If (\ref{cond_0})-(\ref{cond_3}) hold, then \eqref{cond_gluing_2} is satisfied for $\phi_i = \lambda_i (\psi^{\mu_i}_{S_i})_{Q_i}$,
with a modulus of continuity $\eta$ satisfying \eqref{eqn-Linfty}.
\end{lem}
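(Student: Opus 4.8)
The plan is to reduce the general situation in (\ref{cond_gluing_2}) --- two adjacent dyadic subcubes $Q_1 \subset S_i$, $Q_2 \subset S_j$ of \emph{equal} sidelength $\ell(Q_1)=\ell(Q_2)\leq t$ --- to the \emph{proportional} situation already handled in Lemma~\ref{lem_key_one}, where one assumes instead $\ell(Q_1)/\ell(S_i)=\ell(Q_2)/\ell(S_j)$. Since $S_i$ and $S_j$ are adjacent Whitney cubes, property (\ref{eqn-adjacent}) forces the dyadic ratio $\ell(S_j)/\ell(S_i)$ to lie in $\{1,2,4\}$; assume without loss of generality that $\ell(S_i)\leq \ell(S_j)$, say $\ell(S_j)=2^k\ell(S_i)$ with $k\in\{0,1,2\}$.

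First I would replace $Q_1$ by a suitable dyadic subcube. Subdividing $Q_1$ into dyadic subcubes of sidelength $\ell(Q_1)/2^k$ and choosing one, call it $Q_1'$, whose closure contains a point of the nonempty intersection $Q_1\cap Q_2$, we obtain $Q_1'\subset Q_1$ which still touches $Q_2$ and satisfies
$$\frac{\ell(Q_1')}{\ell(S_i)}=\frac{\ell(Q_1)/2^k}{\ell(S_i)}=\frac{\ell(Q_2)}{\ell(S_j)},\qquad \ell(Q_1'),\ \ell(Q_2)\leq t.$$
Lemma~\ref{lem_key_one} then bounds $|\lambda_i(\psi^{\mu_i}_{S_i})_{Q_1'}-\lambda_j(\psi^{\mu_j}_{S_j})_{Q_2}|$ by $\eta(t)$ for a modulus of continuity $\eta$ with $\|\eta\|_\infty\leq C(\|\delta_1\|_\infty+\|\delta_2\|_\infty)$, exactly as in (\ref{eqn-Linfty}).

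It remains to control the error incurred by passing from $Q_1$ to $Q_1'$. Since $Q_1'\subset Q_1$ with $\ell(Q_1)/\ell(Q_1')=2^k\leq 4$, applying Lemma~\ref{lem_d2_proper} to the function $\lambda_i\psi^{\mu_i}_{S_i}\in \VMO(S_i)$ gives
$$|\lambda_i(\psi^{\mu_i}_{S_i})_{Q_1}-\lambda_i(\psi^{\mu_i}_{S_i})_{Q_1'}|\leq C\log(2+2^k)\,\omega_{S_i}(\lambda_i\psi^{\mu_i}_{S_i},\ell(Q_1))\leq C\,\omega_{S_i}(\lambda_i\psi^{\mu_i}_{S_i},t),$$
and by Lemma~\ref{lem_uniform_modulus} the right-hand side is at most $C\tilde\eta(t)$ with $\|\tilde\eta\|_\infty\leq C\|\delta_1\|_\infty$. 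Adding the two estimates via the triangle inequality and taking the resulting modulus of continuity to be their sum, we obtain (\ref{cond_gluing_2}) with a modulus satisfying (\ref{eqn-Linfty}), which then completes the verification of the hypotheses of Lemma~\ref{lem_gluing_dyadic} and hence the proof of Proposition~\ref{main_lemma_1}.

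The only genuinely new point --- and the step most likely to need care --- is the geometric reduction in the second paragraph: one must check that a dyadic subcube of $Q_1$ of the prescribed smaller sidelength can always be chosen so as to still touch $Q_2$, which comes down to observing that any common point of the two closed cubes lies in the closure of at least one such subcube. Everything else is a routine invocation of results already established (Lemmas~\ref{lem_d2_proper}, \ref{lem_uniform_modulus} and \ref{lem_key_one}), and the bookkeeping of the $L^\infty$ norms of the moduli of continuity is identical to that in the preceding lemmas.
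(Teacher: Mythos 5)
Your proof is correct and follows essentially the same strategy as the paper: reduce to the proportional case of Lemma~\ref{lem_key_one} by replacing one of the two cubes, then control the replacement error via Lemma~\ref{lem_d2_proper} and Lemma~\ref{lem_uniform_modulus}. The one difference is a symmetry flip: the paper enlarges the cube $Q_j$ inside the \emph{larger} Whitney cube $S_j$ to a dyadic ancestor $Q_j'$ of sidelength $\ell(Q_i)\ell(S_j)/\ell(S_i)$ (so touching $Q_i$ is automatic, at the cost of having to allow $\ell(Q_j')$ up to $4t$), whereas you shrink the cube $Q_1$ inside the \emph{smaller} Whitney cube $S_i$ to a dyadic descendant $Q_1'$, which requires the short argument you flag (that some subcube still meets $Q_2$) but keeps all sidelengths $\leq t$. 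Both routes work and give the same modulus bound; yours is marginally tidier in the final bookkeeping, the paper's is marginally tidier in the geometry.
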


\begin{proof}
We need to show that there exists a modulus of continuity $\eta$ such that for
any two adjacent Whitney cubes $S_i,S_j$ and touching dyadic cubes $Q_i \subset S_i$, $Q_j \subset S_j$ with $\ell(Q_i)=\ell(Q_j)\leq t$,
$$
|\lambda_i (\psi^{\mu_i}_{S_i})_{Q_i} - \lambda_j (\psi^{\mu_j}_{S_j})_{Q_j} | \leq \eta(t).
$$
The difference with Lemma~\ref{lem_key_one} is that we are not assuming \eqref{eqn-proportional}, so $\ell(S_i) \neq \ell(S_j)$.
Without loss of generality we may assume that $\ell(S_i) < \ell(S_j)$, hence $\frac{\ell(Q_i)}{\ell(S_i)} > \frac{\ell(Q_j)}{\ell(S_j)}$.
Recall that since $S_i$ and $S_j$ are adjacent Whitney cubes, we have $\ell(S_j) \leq 4\ell(S_i)$.

Let $Q_j'$  be a dyadic subcube of $S_j$ containing $Q_j$, hence touching $Q_i$, and of sidelength
$$
\ell(Q_j')= \frac{\ell(Q_i) \ell(S_j)}{\ell(S_i)}.
$$
Note that this means $\ell(Q'_j) \leq 4 \ell(Q_i) = 4\ell(Q_j)$.

By the triangle inequality,
$$
|\lambda_i (\psi^{\mu_i}_{S_i})_{Q_i} - \lambda_j (\psi^{\mu_j}_{S_j})_{Q_j} | \leq |\lambda_i (\psi^{\mu_i}_{S_i})_{Q_i} - \lambda_j (\psi^{\mu_j}_{S_j})_{Q'_j} | + |\lambda_j (\psi^{\mu_j}_{S_j})_{Q'_j} - \lambda_j (\psi^{\mu_j}_{S_j})_{Q_j} |,
$$
By Lemma \ref{lem_key_one}, for some modulus $\eta_1$ satisfying  \eqref{eqn-Linfty} we have
$$
 |\lambda_i (\psi^{\mu_i}_{S_i})_{Q_i} - \lambda_j (\psi^{\mu_j}_{S_j})_{Q'_j} | \leq \eta_1(t) .
$$
By Lemmas~\ref{lem_d2_proper} and \ref{lem_uniform_modulus} there is modulus of continuity $\eta_2$ with $\|\eta_2\|_\infty \leq C\|\delta_1\|_\infty$ such that
$$
 |\lambda_j (\psi^{\mu_j}_{S_j})_{Q'_j} - \lambda_j (\psi^{\mu_j}_{S_j})_{Q_j} | \leq C\eta_2(\ell(Q'_j)) \leq C\eta_2(4t).
 $$
 Letting $\eta(t) = \max(\eta_1(t), \eta_2(4t))$, we have that  \eqref{cond_gluing_2} and \eqref{eqn-Linfty}
 hold.
 \end{proof}

\section{Proof of Proposition~\ref{main_lemma_2}}
We first establish the Proposition under stronger assumptions.

\begin{lem}
\label{lem_log_improvment_easy_version}
Let $S,S_L\in E$ and $\ell(S_L)=L$, where $L$ is as in \eqref{eqn-L}.
Let $\phi\in \VMO(\Omega)$ and $\omegabar$ be the least concave majorant of $\omega_{\Omega}(\phi, \cdot)$.
Given a Whitney chain
$
S=S_0, S_1, \dots, S_m=S_L
$
satisfying
\begin{equation}
\label{eqn-doubling}
\ell(S_j)\geq 2 \ell(S_{j-1}), \quad j=1,2\dots, m,
\end{equation}
we have
$$
|\phi_S - \phi_{S_L}| \leq C \log\frac{L}{\ell(S)}  \cdot \omegabar\left(\frac{4 L }{\log\frac{L }{\ell(S)}} \right).
$$
\end{lem}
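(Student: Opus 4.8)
The plan is to telescope $\phi_S-\phi_{S_L}$ along the given chain, estimate each step by the modulus of mean oscillation, and then use the concavity of $\omegabar$ to collapse the resulting geometric sum into the single term on the right-hand side. First, since $S_{j-1}$ and $S_j$ are adjacent Whitney cubes they are joined by the one-step chain $\{S_{j-1},S_j\}$, which is a shortest Whitney chain, so $d_1(S_{j-1},S_j)=1$, and by \eqref{eqn-doubling} the larger of the two cubes has sidelength $\ell(S_j)$. Hence Lemma~\ref{lem_d1} gives $|\phi_{S_{j-1}}-\phi_{S_j}|\leq C\,\omega_\Omega(\phi,\ell(S_j))\leq C\,\omegabar(\ell(S_j))$, using $\omegabar\geq\omega_\Omega(\phi,\cdot)$. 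Summing and using the triangle inequality, $|\phi_S-\phi_{S_L}|\leq C\sum_{j=1}^m\omegabar(\ell(S_j))$. Iterating \eqref{eqn-doubling} gives $\ell(S_j)\leq 2^{\,j-m}\ell(S_m)=2^{\,j-m}L$, so (as $\omegabar$ is nondecreasing) after substituting $k=m-j$ we get $\sum_{j=1}^m\omegabar(\ell(S_j))\leq\sum_{k=0}^{m-1}\omegabar(2^{-k}L)$; the same iteration at $j=m$ shows $\ell(S)\leq 2^{-m}L$, i.e.\ $m\leq\log(L/\ell(S))=:N$ (in particular $N\geq m\geq 1$, so the right-hand side of the statement is well defined).

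The heart of the proof is the next step. Because $\omegabar$ is concave, nondecreasing and satisfies $\omegabar(0)=0$, for every $k\geq 0$ one has $\omegabar(2^{-k}L)\leq\max(1,m2^{-k})\,\omegabar(L/m)\leq(1+m2^{-k})\,\omegabar(L/m)$: when $m2^{-k}\leq 1$ this is monotonicity, and when $m2^{-k}\geq 1$ it is the concavity bound $\omegabar(\lambda s)\leq\lambda\,\omegabar(s)$, $\lambda\geq 1$. Summing the geometric series,
$$\sum_{k=0}^{m-1}\omegabar(2^{-k}L)\leq\Big(m+m\sum_{k\geq 0}2^{-k}\Big)\omegabar(L/m)=3m\,\omegabar(L/m)\leq 3m\,\omegabar(4L/m).$$
Finally, since $s\mapsto\omegabar(s)/s$ is nonincreasing (concavity together with $\omegabar(0)=0$), the map $x\mapsto x\,\omegabar(4L/x)$ is nondecreasing, so $m\leq N$ gives $m\,\omegabar(4L/m)\leq N\,\omegabar(4L/N)$. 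Combining everything,
$$|\phi_S-\phi_{S_L}|\leq 3C\,N\,\omegabar(4L/N)=3C\,\log\tfrac{L}{\ell(S)}\;\omegabar\!\Big(\tfrac{4L}{\log(L/\ell(S))}\Big),$$
which is the assertion (the constant $3C$ being absorbed into the $C$ of the statement).

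The only delicate point is this last step: it is crucial that $\omegabar$, rather than the original modulus $\omega_\Omega(\phi,\cdot)$, is concave, since concavity is what both lets us sum $\sum_k\omegabar(2^{-k}L)\lesssim m\,\omegabar(L/m)$ and lets us replace $m$ by the larger quantity $N=\log(L/\ell(S))$. For a general modulus of continuity neither manipulation is valid, which is exactly why the least concave majorant is introduced.
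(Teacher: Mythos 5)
Your proof is correct and follows the same overall strategy as the paper: telescope along the chain, estimate each step by $\omega_{\Omega}(\phi,\ell(S_j))$ via Lemma~\ref{lem_d1}, and collapse the geometric sum using concavity of $\omegabar$. The technical implementation of the last step differs, however, in a way worth noting. The paper uses Jensen's inequality for the concave $\omegabar$ to pass from $\frac1m\sum_j\omegabar(2^{j-m}L)$ to $\omegabar\bigl(\frac1m\sum_j 2^{j-m}L\bigr)\leq\omegabar(2L/m)$, and then needs \emph{both} inequalities $\frac12\log\frac{L}{\ell(S)}\leq m\leq\log\frac{L}{\ell(S)}$; the lower bound on $m$ is what converts $\omegabar(2L/m)$ into $\omegabar(4L/\log(L/\ell(S)))$, and it comes from the upper bound $\ell(S_j)/\ell(S_{j-1})\leq 4$ for adjacent Whitney cubes (property \eqref{eqn-adjacent}). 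You instead use the pointwise estimate $\omegabar(2^{-k}L)\leq(1+m2^{-k})\omegabar(L/m)$ (essentially the sublinearity $\omegabar(\lambda s)\leq\lambda\omegabar(s)$ for $\lambda\geq 1$), sum the geometric series to get $3m\,\omegabar(4L/m)$, and then observe that $x\mapsto x\,\omegabar(4L/x)$ is nondecreasing because $\omegabar(s)/s$ is nonincreasing. This last monotonicity observation lets you use \emph{only} the upper bound $m\leq\log(L/\ell(S))$, which follows from the hypothesis \eqref{eqn-doubling} alone; the adjacency bound \eqref{eqn-adjacent} is not needed in the final step. Your route is therefore marginally more robust (it would apply to any chain satisfying just the doubling condition) and slightly more elementary, at the cost of a somewhat larger constant; both are perfectly valid.
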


\begin{proof}
By \eqref{eqn-doubling} and property \eqref{eqn-adjacent} of adjacent Whitney cubes,
$$
2 \leq \frac{\ell(S_{j})}{\ell(S_{j-1})} \leq 4, \quad j=1,2\dots, m,
$$
and therefore
$$\frac{L}{\ell(S)} =  \frac{\ell(S_{m})}{\ell(S_{0})} = \prod_{j = 1}^m  \frac{\ell(S_{j})}{\ell(S_{j-1})} \in [2^m, 4^m],$$
or taking logs (to the base $2$),
\begin{equation}
\label{temp_2122}
\frac{1}{2} \log \frac{L}{\ell(S)} \leq m \leq \log \frac{L}{\ell(S)}.
\end{equation}
By Lemma \ref{lem_d1},
$$
|\phi_{S_j}-\phi_{S_{j-1}}| \leq C \omega_{\Omega}(\phi,\ell(S_j)) \leq C \omega_{\Omega}(\phi,2^{j-m} L).
$$
Hence, by the triangle inequality,
$$
|\phi_S - \phi_{S_L}| \leq C \sum_{j=1}^m \omega_{\Omega}(\phi,2^{j-m} L).
$$
If $\omegabar$ is a concave majorant of $\omega_{\Omega}(\phi, \cdot)$, then
$$
|\phi_S - \phi_{S_L}| \leq C m \; \left [\frac{1}{m} \sum_{j=1}^m \omegabar(2^{j-m} L) \right] \leq Cm \; \omegabar \left ( \sum_{j=1}^m  \frac{2^{j-m}}{m}  L \right) \leq Cm \; \omegabar \left (\frac{2 L}{m}   \right).
$$
Using (\ref{temp_2122}) we complete the proof.
\end{proof}

In order to generalize this lemma we need the following result of Jones (see Lemma 2.6 in \cite{Jones}).
\begin{lem}
\label{lem_Jones_about_chains}
Let $\Omega$ be a domain satisfying the Jones condition (\ref{eqn-JonesCond}) with constant $\kappa$, $S_0,S_j \in E$ and $S_0,S_1,\dots, S_j$ be a shortest Whitney chain connecting $S_0$ and $S_j$.  If $\ell(S_0)<\ell(S_j)$, then there exists an integer $\alpha\leq \min (j, \kappa^2)$ such that
\begin{equation}
\label{eqn-proportional2}
2\leq \frac{\ell(S_\alpha)} {\ell(S_0)}\leq 4.
\end{equation}
\end{lem}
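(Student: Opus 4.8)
Although this is Lemma~2.6 of \cite{Jones}, let me describe the argument I would give. The plan is to take $\alpha$ to be the first index along the chain at which the sidelength has at least doubled relative to $S_0$:
$$\alpha = \min\{\, i \in\{0,1,\dots,j\}: \ell(S_i)\ge 2\,\ell(S_0)\,\}.$$
Since sidelengths of Whitney cubes are integer powers of $2$, the hypothesis $\ell(S_0)<\ell(S_j)$ forces $\ell(S_j)\ge 2\,\ell(S_0)$, so the index $j$ lies in the set above and $\alpha$ is well defined with $1\le\alpha\le j$ (the index $0$ is excluded because $\ell(S_0)<2\,\ell(S_0)$). In particular $\alpha\le j$, which is half of the asserted bound. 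For \eqref{eqn-proportional2}, the lower bound $\ell(S_\alpha)\ge 2\,\ell(S_0)$ is the definition of $\alpha$, while minimality gives $\ell(S_{\alpha-1})<2\,\ell(S_0)$, hence $\ell(S_{\alpha-1})\le\ell(S_0)$, and then \eqref{eqn-adjacent} yields $\ell(S_\alpha)\le 4\,\ell(S_{\alpha-1})\le 4\,\ell(S_0)$.

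It then remains to show $\alpha\le\kappa^2$, and this is where \eqref{eqn-JonesCond} enters. First I would observe that the initial segment $S_0,S_1,\dots,S_\alpha$ is again a \emph{shortest} Whitney chain between its endpoints: a shorter chain from $S_0$ to $S_\alpha$, concatenated with $S_\alpha,\dots,S_j$, would produce a chain from $S_0$ to $S_j$ shorter than the one we started with. Hence $d_1(S_0,S_\alpha)=\alpha$. Next I would bound $d_2(S_0,S_\alpha)$ from above: by \eqref{eqn-proportional2} the logarithmic term satisfies $|\log(\ell(S_0)/\ell(S_\alpha))|\le\log 4=2$, and, since minimality of $\alpha$ forces $\ell(S_i)\le\ell(S_0)$ for $i<\alpha$ and $\ell(S_\alpha)\le 4\,\ell(S_0)$, choosing points $p_i\in S_i\cap S_{i+1}$ and chaining the triangle inequality gives $d(S_0,S_\alpha)\le\sum_{i=0}^{\alpha}\diam(S_i)\le 4\sqrt n\,(\alpha+1)\,\ell(S_0)$; dividing by $\ell(S_0)+\ell(S_\alpha)\ge 3\,\ell(S_0)$ then yields $d_2(S_0,S_\alpha)\le C_n+\log(\alpha+1)$ for a constant $C_n$ depending only on the dimension.

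Feeding these into the Jones condition \eqref{eqn-JonesCond} gives
$$\alpha=d_1(S_0,S_\alpha)\le\kappa\,d_2(S_0,S_\alpha)\le\kappa\bigl(C_n+\log(\alpha+1)\bigr).$$
The bound $\alpha\le\kappa^2$ then follows by an elementary estimate: if $\alpha\ge\kappa^2$ then $\alpha/\kappa\ge\sqrt\alpha$, and since $\sqrt\alpha$ eventually exceeds $C_n+\log(\alpha+1)$ this contradicts the displayed inequality once $\alpha$ passes a dimensional threshold; one may assume $\kappa$ exceeds that threshold without loss, since the Jones condition with a given constant also holds with every larger one. I expect this last, purely arithmetic, passage from the self-improving inequality $\alpha\le\kappa(C_n+\log(\alpha+1))$ to the clean bound $\alpha\le\kappa^2$ to be the only slightly delicate point; the geometric content of the lemma is really just the two observations that a subchain of a shortest chain is shortest and that \eqref{eqn-adjacent} keeps the sidelengths along the initial segment comparable to $\ell(S_0)$.
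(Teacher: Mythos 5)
The paper does not reproduce a proof of this statement; it cites Lemma~2.6 of Jones' original paper, so there is no internal argument here to compare against. Your overall structure is the natural one and most of it is correct: take $\alpha$ to be the first index at which the sidelength has at least doubled, verify \eqref{eqn-proportional2} via the dyadic structure and \eqref{eqn-adjacent}, observe that an initial segment of a shortest Whitney chain is itself shortest so that $d_1(S_0,S_\alpha)=\alpha$, bound $d_2(S_0,S_\alpha)$ by chaining the diameters of the (uniformly small) intermediate cubes, and feed this into the Jones condition to obtain $\alpha\le\kappa\bigl(C_n+\log(\alpha+1)\bigr)$. Up to this point the argument is sound.

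The final arithmetic step, however, has a genuine gap, and the way you try to patch it makes matters worse rather than better. From $\alpha\le\kappa\bigl(C_n+\log(\alpha+1)\bigr)$ together with the assumption $\alpha>\kappa^2$ you correctly deduce $\sqrt{\alpha}<C_n+\log(\alpha+1)$, hence $\alpha<\alpha_0(n)$ for a dimensional threshold $\alpha_0(n)$. What you have actually proved is the disjunction: either $\alpha\le\kappa^2$, or $\alpha<\alpha_0(n)$. This yields the claimed bound only when $\kappa\ge\sqrt{\alpha_0(n)}$. Your proposed fix, ``assume without loss of generality that $\kappa$ exceeds that threshold,'' is backwards: replacing $\kappa$ by a larger $\kappa'\ge\sqrt{\alpha_0(n)}$ for which \eqref{eqn-JonesCond} still holds would give $\alpha\le(\kappa')^2$, which is strictly weaker than, and does not imply, the required $\alpha\le\kappa^2$. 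The $\kappa$ in the conclusion of the lemma is the same $\kappa$ as in the hypothesis and cannot be freely inflated. Since very nice domains have small Jones constants while $\sqrt{\alpha_0(n)}$ is not small, the uncovered range is not a degenerate edge case. Closing the gap needs an additional input---for instance a bound on $d_2(S_0,S_\alpha)$ that does not degrade logarithmically in $\alpha$, or a more structured use of the Jones condition---rather than an enlargement of $\kappa$.
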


\begin{proof}[Proof of Proposition \ref{main_lemma_2}]
Let $S=S_0, S_1, \dots , S_m=S_L$ be a shortest Whitney chain connecting $S$ and $S_L$.
We claim that there are integers
$$
0=\alpha_0<\alpha_1<\dots < \alpha_M \leq m
$$
such that
\begin{equation}
\label{eqn-equal-lengths}
\ell(S_{\alpha_M})=L,
\end{equation}
\begin{equation}
\label{temp_alpha_cond_1}
d_1(S_{\alpha_{i}},S_{\alpha_{i-1}})\leq \kappa^2,
\end{equation}
\begin{equation}
\label{temp_alpha_cond_2}
2 \leq \frac{\ell(S_{\alpha_{i}})}{\ell(S_{\alpha_{i-1}})} \leq 4.
\end{equation}

The existence of $\alpha_j$ follows from Lemma \ref{lem_Jones_about_chains}. If $\ell(S_0)=\ell(S_L)=L$, we simply put $M=0$. Otherwise $\ell(S_0) < \ell(S_L)$ so by Lemma~\ref{lem_Jones_about_chains}, applied to $S_0 = S$ and $S_j = S_L$, there exists an integer $\alpha \leq \kappa^2$ such that \eqref{eqn-proportional2} holds.
We set $\alpha_1$ to be the smallest such $\alpha$.
If $\ell(S_{\alpha_1})=L$, then we put $M=1$ and stop.  Otherwise we can again evoke Lemma \ref{lem_Jones_about_chains}, this time with $S_0 = S_{\alpha_1}$,
to obtain
$\alpha_2$ with $\alpha_2 - \alpha_1 \leq \kappa^2$.  Continuing in this way, we have $S_{\alpha_i}$, $i = 1,\ldots, M$, where $M$ is the first $i$ with $\ell(S_{\alpha_i})=L$.
By construction, \eqref{eqn-equal-lengths} and \eqref{temp_alpha_cond_2} hold.

Moreover, note that along a shortest Whitney chain, the distance $d_1$ is additive, that is, the part of the given Whitney chain between $S_{\alpha_{i-1}}$ and $S_{\alpha_{i}}$ also forms a shortest chain between these two cubes, so $d_1(S_{\alpha_{i}},S_{\alpha_{i-1}}) = \alpha_i -\alpha_{i-1}$ and \eqref{temp_alpha_cond_1} holds.
Finally, by the minimality of the choice of $\alpha_i$ at each stage, and property \eqref{eqn-adjacent} of adjacent Whitney cubes, we must have that
$$
\frac{\ell(S_j)}{\ell(S_{\alpha_{i-1}})} \leq 1, \quad \alpha_{i-1} < j < \alpha_i.
$$
Thus $S_{\alpha_{i}}$ is the largest cube in a shortest Whitney chain between $S_{\alpha_{i-1}}$ and $S_{\alpha_{i}}$.

Now that we have the $\alpha_i$, the proof is almost the same as the one of Lemma \ref{lem_log_improvment_easy_version}. Indeed, in this case \eqref{temp_alpha_cond_2} implies
\begin{equation}
 \label{temp_2123}
\frac{1}{2} \log\frac{L}{\ell(S)} \leq M \leq \log\frac{L}{\ell(S)}
\end{equation}
and we can write, setting $S_{\alpha_{M+1}} = S_L$ and applying Lemma 2, \eqref{temp_alpha_cond_1}, \eqref{temp_alpha_cond_2}  and the Jones condition,
\begin{eqnarray*}
|\phi_{S}-\phi_{S_L}| & \leq & \sum_{i=1}^{M+1}|\phi_{S_{\alpha_i}}-\phi_{S_{\alpha_{i-1}}}| \\
& \leq &  \sum_{i=1}^{M+1}  d_1(S_{\alpha_{i}},S_{\alpha_{i-1}}) \omega_{\Omega}(\phi,\ell(S_{\alpha_i})) \\
& \leq &  \sum_{i=1}^{M}  \kappa^2 \omega_{\Omega}(\phi,2^{i-M}L)  + \kappa d_2(S_{\alpha_M},S_L) \omega_{\Omega}(\phi,L)
\end{eqnarray*}
Recalling the definition of $d_2$, we have
\begin{eqnarray*}
d_2(S_{\alpha_M},S_L)  & = &\left| \log \frac{\ell(S_{\alpha_M})}{\ell(S_L)} \right| + \log \left( 2 + \frac{d(S_{\alpha_M},S_L)}{\ell(S_{\alpha_M}) + \ell(S_L)}\right)\\
& \leq & 0 + \log \left( 2 + \frac{\diam(\Omega)}{2L}\right) = C_\Omega.
\end{eqnarray*}
Now, as in the proof of Lemma~\ref{lem_log_improvment_easy_version}, we have
\begin{eqnarray*}
|\phi_{S}-\phi_{S_L}| & \leq & C_\kappa \sum_{i=1}^{M+1} \omega_{\Omega}(\phi,\min(2^{i-M},1)L)\\
& \leq & C_\kappa (M+1) \omegabar\left(\frac{3L}{M+1}\right),
\end{eqnarray*}
where $\omegabar$ is a convex majorant of $\omega_\Omega(\phi,\cdot)$.
By (\ref{temp_2123}), this completes the proof of the proposition.
\end{proof}

\section{Proof of Proposition~\ref{main_lemma_3}}
Assume the hypotheses of Proposition~\ref{main_lemma_3}.  All the lemmas in this section
refer to the notation in the statement of the Proposition.

We proceed following the ideas of the proof of Lemma 2.11 in \cite{Jones}.
We will need the following result (Lemma 2.10 in \cite{Jones}).
 \begin{lem}
\label{lem_dist}
Let $S'\in E'$ and $S\in E$ be matching cubes. Then
$$
d(S,S') \leq 65 \kappa^2 \ell(S').
$$
 \end{lem}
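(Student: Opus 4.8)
The statement to prove is Lemma~\ref{lem_dist}: for matching cubes $S' \in E'$ and $S \in E$, we have $d(S,S') \leq 65\kappa^2 \ell(S')$. This is cited as Lemma 2.10 in Jones' paper, so the plan is essentially to reconstruct Jones' argument in the present notation, combining the Jones condition with the geometric properties of Whitney cubes.

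\textbf{Approach.} The plan is to use the defining property \eqref{eqn-distance} of the Whitney decompositions of both $\Omega$ and $\Omegabar^c$, together with the relation \eqref{matching} between the sidelengths of matching cubes and the Jones condition \eqref{eqn-JonesCond}. The key tension to exploit is this: since $S$ and $S'$ are matching cubes, $\ell(S') \leq \ell(S) \leq 2\ell(S')$, and $S$ is the \emph{nearest} cube of $E$ to $S'$; on the other hand, a point $p$ on $\bOmega$ near $S'$ is close to $\Omega$, hence close to some Whitney cube $S_0 \in E$ whose sidelength is comparable to $d(p, \bOmega)$, which is small. If $d(S,S')$ were very large compared to $\ell(S')$, then $S$ would be far from $p$, and a Whitney chain in $\Omega$ from $S$ to $S_0$ would have to be long (since the cubes near $p$ are small, by \eqref{eqn-distance}), forcing $d_1(S, S_0)$ to be large. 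But $d_2(S, S_0)$ can be controlled: $S_0$ is small, $S$ is not too much bigger, and $d(S, S_0)$ is comparable to $d(S, S')$, so $d_2(S,S_0) \approx \log(d(S,S')/\ell(S'))$, which grows only logarithmically. The Jones condition $d_1 \leq \kappa d_2$ then caps $d_1$, and comparing with the lower bound on $d_1$ forces $d(S,S')/\ell(S')$ to be bounded by a constant depending on $\kappa$.

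\textbf{Steps in order.} First I would fix a point $p \in \bOmega$ realizing (up to a factor) the distance from $S'$ to $\Omega$; by \eqref{eqn-distance} applied in $E'$, $d(S', \bOmega) \approx \ell(S')$, so $p$ is within $O(\ell(S'))$ of $S'$. Second, pick a Whitney cube $S_0 \in E$ with $p \in \overline{S_0}$ or adjacent to $p$; by \eqref{eqn-distance}, $\ell(S_0) \approx d(S_0, \Omega^c) \leq |S_0 \text{ to } p|$-type bound, so $\ell(S_0)$ is small, comparable to its own distance to the boundary. Third, since $S$ is the nearest cube in $E$ to $S'$ and $S_0 \in E$, we get $d(S, S') \leq d(S_0, S') \leq d(S_0,p) + d(p,S') \lesssim \ell(S_0) + \ell(S')$; more importantly $d(S,S_0) \leq d(S,S') + d(S',S_0) \lesssim d(S,S') + \ell(S')$. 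Fourth, bound any Whitney chain from $S$ to $S_0$: since it is a connected chain of cubes joining a neighborhood of $S$ to a neighborhood of $p$, and near $p$ the cubes have sidelength $\to 0$ comparably to distance to $\bOmega$ (by \eqref{eqn-distance}), the chain must contain at least $\gtrsim \log(d(S,S')/\ell(S'))$ cubes, i.e. $d_1(S,S_0) \gtrsim \log(d(S,S')/\ell(S'))$ — actually one wants the reverse-type estimate; the cleaner route following Jones is: assume for contradiction $d(S,S') > 65\kappa^2 \ell(S')$ and derive that $d_1(S,S_0) > \kappa d_2(S,S_0)$. Fifth, estimate $d_2(S, S_0) = |\log(\ell(S)/\ell(S_0))| + \log(2 + d(S,S_0)/(\ell(S)+\ell(S_0)))$; using $\ell(S) \approx \ell(S')$, $\ell(S_0) \lesssim \ell(S')$, and $d(S,S_0) \lesssim d(S,S')$, this is $\lesssim \log(\ell(S')/\ell(S_0)) + \log(d(S,S')/\ell(S'))$. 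Sixth, bound $d_1(S, S_0)$ from below: a shortest chain from $S$ to $S_0$ of length $m$ has cubes whose sidelengths change by at most a factor $4$ per step (by \eqref{eqn-adjacent}), so $\ell(S_0) \geq 4^{-m}\ell(S)$, giving $m \geq \frac14 \log(\ell(S)/\ell(S_0))$; separately, the chain starting at $S$ (of sidelength $\approx \ell(S')$) must travel Euclidean distance $\gtrsim d(S,S') - \ell(S')$, and near $S$ the steps have size $\lesssim \ell(S')$, while... this needs care. The honest approach is to follow Jones line by line: set $m = d_1(S, S_0)$, use \eqref{eqn-distance} to show $\sum_{i} \ell(Q_i) \gtrsim d(S,S_0)$ along the chain while each $\ell(Q_i) \leq 4^{\min(i, m-i)}\max(\ell(S),\ell(S_0))$ decays geometrically away from the endpoints, so the total is $\lesssim \ell(S') \cdot$ (something like $4^{m/2}$ is wrong)... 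In fact Jones' Lemma 2.5/2.6 type estimate gives $d(S,S_0) \lesssim \ell(S') 4^{m}$ hence $m \gtrsim \log(d(S,S')/\ell(S'))$; combined with $m = d_1(S,S_0) \leq \kappa d_2(S,S_0) \lesssim \kappa[\log(\ell(S')/\ell(S_0)) + \log(d(S,S')/\ell(S'))]$ and the fact that $\ell(S_0) \gtrsim d(p,\bOmega)/(4\sqrt n)$ which is itself bounded below once $d(S,S')$ is bounded... The cleanest self-contained statement is that this is Jones' Lemma 2.10, whose proof combines his Lemmas 2.5 and 2.8; I would reproduce that argument with $\kappa$ in place of Jones' $C_2$ constant, carefully tracking the factor $65\kappa^2$.

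\textbf{Main obstacle.} The delicate point is the two-sided estimate relating the combinatorial distance $d_1$ along Whitney chains to the Euclidean geometry near the boundary — specifically, showing that a short Whitney chain cannot connect $S$ to a small cube $S_0$ sitting at distance $\gg \ell(S')$, because the chain's cubes shrink geometrically as they approach $\bOmega$ near $S_0$ yet must collectively span the Euclidean gap. Getting the constant exactly $65\kappa^2$ (rather than just ``some $C(\kappa)$'') requires carefully optimizing: choosing $S_0$ as the Whitney cube containing the point of $\overline{\Omega}$ nearest to the center of $S'$, using $d(S', \bOmega) \le 4\sqrt n\,\ell(S')$ and $\ell(S') \le d(S',\bOmega)$ from \eqref{eqn-distance}, using \eqref{matching} for $\ell(S)/\ell(S')$, and then tracking the Jones condition with its constant $\kappa$. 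Since the statement explicitly cites Jones' Lemma 2.10 and the constant $65\kappa^2$ matches his, the safest exposition is: ``This is Lemma 2.10 in \cite{Jones}'' followed by a brief indication that the proof there carries over verbatim since it only uses \eqref{eqn-distance}, \eqref{eqn-adjacent}, \eqref{matching}, and \eqref{eqn-JonesCond}, all of which hold in our setting.
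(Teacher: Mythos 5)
The paper does not prove Lemma~\ref{lem_dist} at all: it is stated with the remark ``We will need the following result (Lemma 2.10 in \cite{Jones})'' and no proof is given, since it is quoted verbatim from Jones. Your proposal's bottom line --- that the safest exposition is to cite Jones' Lemma 2.10, noting that its proof uses only \eqref{eqn-distance}, \eqref{eqn-adjacent}, \eqref{matching}, and \eqref{eqn-JonesCond}, all valid in the present setting --- therefore matches exactly what the paper does.

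That said, the reconstruction you sketch in between is not a proof: you flag several steps yourself as ``this needs care'' or ``$4^{m/2}$ is wrong,'' and in particular step six, the lower bound on $d_1(S,S_0)$, is the genuinely delicate point. Recall that in any domain one has the elementary inequality $d_2 \lesssim d_1$, so $d_1$ and $d_2$ are always comparable under the Jones condition; a na\"ive ``lower bound $d_1$, upper bound $d_2$, contradiction'' cannot work on its own. Jones' actual argument uses the auxiliary cube $S_0\in E$ nearest to $S'$ (with no sidelength restriction) together with the Whitney property to force $\ell(S_0)\lesssim \ell(S')$, and then compares $d_2(S_0,S)$ against the constraint that $\ell(S)\geq\ell(S')$ while $S$ is the nearest such cube, so that if $d(S,S')$ were large, the Jones chain from $S_0$ to $S$ would pass through cubes with $\ell\geq\ell(S')$ closer to $S'$ than $S$, contradicting minimality. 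Your outline gestures at this but does not carry it through; since the paper also omits the proof, this gap does not put your proposal at odds with the paper, but as written it does not establish the explicit constant $65\kappa^2$.
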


The following lemma shows that  \eqref{eqn-lemma3hyp1}
is independent of the choice of matching cube $S$ of $S'$.

\begin{lem}
\label{Matching cubes}
If $S_1, S_2$ are matching cubes to $S'$, then for some $C_1,C_2 > 0$, which depend only on $\kappa$,
$$
|(\phi_1)_{S_1} - (\phi_1)_{S_2}| \leq C_1 \omega_{\Omega}(\phi_1,C_2  \ell(S_1)).
$$
\end{lem}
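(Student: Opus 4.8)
The plan is to observe that any two matching cubes of the same $S'$ are uniformly close in both of Jones' distance functions, and then to quote Lemma~\ref{lem_d1}.

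First I would collect the elementary comparisons. By \eqref{matching}, $\ell(S') \le \ell(S_i) \le 2\ell(S')$ for $i=1,2$, so $\ell(S_1)$, $\ell(S_2)$ and $\ell(S')$ are all comparable; in particular $\tfrac12 \le \ell(S_1)/\ell(S_2) \le 2$. By Lemma~\ref{lem_dist}, $d(S_i,S') \le 65\kappa^2\ell(S')$ for $i=1,2$, so the triangle inequality for sets gives $d(S_1,S_2) \le d(S_1,S') + \diam(S') + d(S',S_2) \le C\kappa^2\ell(S_1)$ for a dimensional constant $C$.

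Next I would feed these estimates into the definition of $d_2$: since $|\log(\ell(S_1)/\ell(S_2))| \le 1$ and $d(S_1,S_2)/(\ell(S_1)+\ell(S_2)) \le C\kappa^2$, we obtain $d_2(S_1,S_2) \le 1 + \log(2 + C\kappa^2) =: C_\kappa$, a number depending only on $\kappa$ (and $n$). The Jones condition \eqref{eqn-JonesCond} then gives $d_1(S_1,S_2) \le \kappa C_\kappa$, so a shortest Whitney chain joining $S_1$ to $S_2$ has length at most $m_\kappa := \lfloor \kappa C_\kappa \rfloor$.

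Finally I would apply Lemma~\ref{lem_d1} to $\phi_1$ and the cubes $S_1,S_2$: it yields $|(\phi_1)_{S_1} - (\phi_1)_{S_2}| \le C\, d_1(S_1,S_2)\, \omega_\Omega(\phi_1,\ell(Q)) \le C m_\kappa\, \omega_\Omega(\phi_1,\ell(Q))$, where $Q$ is a largest cube in that shortest chain and $\ell(Q) \le 4^{d_1(S_1,S_2)}\ell(S_1) \le 4^{m_\kappa}\ell(S_1)$. Since $\omega_\Omega(\phi_1,\cdot)$ is nondecreasing, this is exactly the asserted inequality with $C_1 = C m_\kappa$ and $C_2 = 4^{m_\kappa}$, both depending only on $\kappa$. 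I do not expect a genuine obstacle here; the only point requiring care is to check that every constant introduced along the way is controlled purely by $\kappa$ and the dimension, which is automatic because all cubes in play have sidelength comparable to $\ell(S')$ and the chain length is bounded by Jones' condition alone.
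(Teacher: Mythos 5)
Your proposal is correct and follows essentially the same route as the paper: use \eqref{matching} to make the sidelengths comparable, Lemma~\ref{lem_dist} and the triangle inequality to bound $d(S_1,S_2)$ by a multiple of $\ell(S_1)$, deduce $d_2(S_1,S_2)\leq C_\kappa$, invoke the Jones condition to bound $d_1(S_1,S_2)$, and finish with Lemma~\ref{lem_d1}. Your inclusion of the $\diam(S')$ term in the set triangle inequality is a small but welcome precision that the paper leaves implicit.
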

\begin{proof}
Recalling (\ref{matching}), for $i = 1, 2$ we have $\ell(S')\leq \ell(S_i)\leq 2 \ell(S')$ and therefore
\begin{equation}
\label{here_2}
\frac{1}{2}\leq \frac{\ell(S_1)}{\ell(S_2)} \leq 2.
\end{equation}
By Lemma~\ref{lem_dist} and  \eqref{matching}, for both cubes $S_i$ we have
$$
d(S',S_i) \leq 65 \kappa^2 \ell(S') \leq 130 \kappa^2 \ell(S_i).
$$
Thus by the triangle inequality,
$$
d(S_1,S_2)  \leq 130 \kappa^2 (\ell(S_1)+\ell(S_2))
$$
or equivalently
\begin{equation}
\label{here_3}
\frac{d(S_1,S_2)}{\ell(S_1)+\ell(S_2)} \leq 130 \kappa^2.
\end{equation}
Combining \eqref{here_2} and \eqref{here_3} and recalling that $\Omega$ satisfies the Jones condition, we obtain
$$
d_1(S_1,S_2)\leq \kappa d_2(S_1,S_2) \leq C_\kappa
$$
for some $C_\kappa>0$. The proof is then completed by Lemma \ref{lem_d1}.
\end{proof}

\begin{lem}
\label{lem_used_twice2}
Given $C_0>0$, there is a $C>0$ and a modulus of continuity $\eta'$ such that
\begin{equation}
\label{eqn-Linfty'}
\|\eta'\|_{\infty}\leq C (\|\omega_{\Omega}(\phi_1,\cdot) \|_\infty + \|\omega_{\Omega'}(\phi_2,\cdot) \|_\infty + \|\eta \|_\infty)
\end{equation} and
$$
 |\Phi_{Q_1} - \Phi_{Q_2}| \leq \eta'(\ell(Q_1))
$$
uniformly for $Q_i\subset S_i$, $i =1,2$, with $S_i \in E\cup E'$,
\begin{equation}
\label{temp_05}
1\leq \frac{\ell(S_i)}{\ell(Q_i)}\leq C_0,
\end{equation}
and
\begin{equation}
\label{temp_06}
d_2(Q_1,Q_2) \leq C_0.
\end{equation}
 \end{lem}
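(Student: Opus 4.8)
The plan is to reduce the comparison $|\Phi_{Q_1}-\Phi_{Q_2}|$ to a comparison of averages of $\phi_1$ over two Whitney cubes of $\Omega$, exploiting that $\Omega$---unlike $\Omega'$, which need be neither connected nor a Jones domain---satisfies the Jones condition, the passage from $\Omega'$ back to $\Omega$ being furnished by the matching cubes and the linking hypothesis \eqref{eqn-lemma3hyp1}. At the outset I record that \eqref{temp_05}, \eqref{temp_06} and the definition of $d_2$ yield, with constants depending only on $C_0$, the comparabilities $\ell(Q_1)\approx\ell(Q_2)\approx\ell(S_1)\approx\ell(S_2)$ and $d(Q_1,Q_2)\lesssim\ell(Q_1)$; these are used below without further comment. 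Recall also (Remark~\ref{rmk-opensets}) that $\VMO(\Omega')$ is taken componentwise, which is harmless since every cube appearing below lies in a single component.

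\emph{Reduction to the cubes $S_i$ and then to matching cubes.} Since $Q_i\subset S_i$ with $1\le\ell(S_i)/\ell(Q_i)\le C_0$, and $\Phi$ agrees on $S_i$ with $\phi_1$ when $S_i\in E$ and with $\phi_2$ when $S_i\in E'$ (in which case $S_i\subset\Omega'$, because $\Phi_{S_i}$ is defined, so $\ell(S_i)\le L$ and $S_i$ has a matching cube in $E$), Lemma~\ref{lem_d2_proper}, applied on the relevant component, gives $|\Phi_{Q_i}-\Phi_{S_i}|\le C\log(2+C_0)\,\omega(\phi_{k_i},C\ell(Q_1))$, where $\omega(\phi_{k_i},\cdot)$ denotes $\omega_\Omega(\phi_1,\cdot)$ or $\omega_{\Omega'}(\phi_2,\cdot)$ according as $S_i\in E$ or $S_i\in E'$. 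Next, for each $i$ with $S_i\in E'$ fix a matching cube $\widetilde S_i\in E$ for which \eqref{eqn-lemma3hyp1} holds, so that $|\Phi_{S_i}-(\phi_1)_{\widetilde S_i}|=|(\phi_2)_{S_i}-(\phi_1)_{\widetilde S_i}|\le\eta(\ell(S_i))\le\eta(C\ell(Q_1))$; for $S_i\in E$ put $\widetilde S_i=S_i$, making this difference zero. By \eqref{matching}, $\ell(\widetilde S_i)\approx\ell(Q_1)$ in both cases.

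\emph{Comparison of the matching cubes inside $\Omega$ and assembly.} It remains to bound $|(\phi_1)_{\widetilde S_1}-(\phi_1)_{\widetilde S_2}|$. From Lemma~\ref{lem_dist} (together with \eqref{matching}) one has $d(\widetilde S_i,S_i)\lesssim\ell(Q_1)$ when $S_i\in E'$, while $d(S_1,S_2)\le d(Q_1,Q_2)+\diam S_1+\diam S_2\lesssim\ell(Q_1)$; a triangle inequality then gives $d(\widetilde S_1,\widetilde S_2)\lesssim\ell(Q_1)\approx\ell(\widetilde S_1)+\ell(\widetilde S_2)$, and together with $|\log(\ell(\widetilde S_1)/\ell(\widetilde S_2))|\le C$ this shows $d_2(\widetilde S_1,\widetilde S_2)\le C$, with $C$ depending only on $C_0$ and $\kappa$. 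The Jones condition \eqref{eqn-JonesCond} then bounds $d_1(\widetilde S_1,\widetilde S_2)$, and since $\Omega$ is connected a Whitney chain exists, so Lemma~\ref{lem_d1} gives $|(\phi_1)_{\widetilde S_1}-(\phi_1)_{\widetilde S_2}|\le C\,\omega_\Omega(\phi_1,C\ell(Q_1))$, the largest cube in a shortest chain having sidelength at most $4^{d_1(\widetilde S_1,\widetilde S_2)}\ell(\widetilde S_1)\lesssim\ell(Q_1)$. Combining the three estimates by the triangle inequality and setting $\eta'(t):=C(\omega_\Omega(\phi_1,Ct)+\omega_{\Omega'}(\phi_2,Ct)+\eta(Ct))$---a modulus of continuity, being a finite sum of moduli of continuity precomposed with a dilation (see Remark~\ref{rmk-moc})---yields $|\Phi_{Q_1}-\Phi_{Q_2}|\le\eta'(\ell(Q_1))$ and the bound \eqref{eqn-Linfty'}.

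The main obstacle is the case $S_i\in E'$: since $\Omega'$ is in general neither connected nor a Jones domain, one cannot compare $(\phi_2)_{S_1}$ with $(\phi_2)_{S_2}$ directly, and the estimate must be routed through $\Omega$ via \eqref{eqn-lemma3hyp1}. The only genuinely geometric step in doing so is converting the hypothesis $d_2(Q_1,Q_2)\le C_0$ into boundedness of $d_2(\widetilde S_1,\widetilde S_2)$, which is exactly where Lemma~\ref{lem_dist} enters.
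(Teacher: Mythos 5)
Your proof is correct and follows essentially the same route as the paper's, relying on the same tools: Lemma~\ref{lem_d2_proper} to pass from $Q_i$ to $S_i$, Lemma~\ref{lem_dist} and the Jones condition to compare matching cubes, Lemma~\ref{lem_d1} for the chain estimate, and hypothesis \eqref{eqn-lemma3hyp1} to cross from $\Omega'$ to $\Omega$. The only difference is organizational: by setting $\widetilde{S}_i=S_i$ when $S_i\in E$ and always routing $Q_i\to S_i\to\widetilde{S}_i$, you collapse the paper's three-case analysis into one argument, which is arguably cleaner since the paper's Case 1 informally applies $d_1$ directly to the subcubes $Q_1,Q_2$ rather than to the Whitney cubes containing them.
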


 \begin{proof}
Note that conditions \eqref{temp_05} and \eqref{temp_06} imply that the four sidelengths $\ell(Q_i)$, $\ell(S_i)$, $i = 1,2$, are all comparable.
We consider three possibilities

\noindent
\textit{Case 1:} $Q_1,Q_2 \subset \Omega$.
 In this case by Lemma \ref{lem_d1}, the Jones condition and (\ref{temp_06}),
\begin{eqnarray*}
 |\Phi_{Q_1}-\Phi_{Q_2}|  =  |(\phi_1)_{Q_1}-(\phi_1)_{Q_2}| & \leq & C d_1(Q_1,Q_2) \omega_\Omega(\phi_1,4^{d_1(Q_1,Q_2)} \ell(Q_1)) \\
 & \leq &
  C'' \omega_\Omega(\phi_1,C \ell(Q_1)).
\end{eqnarray*}

\noindent
\textit{Case 2:} $Q_1\subset S_1 \in E$ and $Q_2\subset S'_2 \in E'$.
Then
\begin{equation}
\label{temp_07}
d(S_1,S_2') \leq d(Q_1,Q_2) \leq C' \ell(Q_1),
\end{equation}
where we have used \eqref{temp_06} and the definition of $d_2$. Let $S_2\in E$ be a matching  cube of $S'_2$.  By  \eqref{matching},
$\ell(S_2)$ is comparable with the other four sidelengths.
Lemma~\ref{lem_dist}, \eqref{temp_07} and \eqref{temp_05} give
$$
 d(S_1,S_2) \leq d(S_1,S_2') + d(S_2,S_2')  \leq C' \ell(Q_1) + C'_\kappa \ell(S_2')  \leq C''_\kappa (\ell(S_1)+\ell(S_2)).
$$
From this and the fact that $\ell(S_1)$ and $\ell(S_2)$ are comparable, we have
\begin{equation}
\label{temp_08}
d_2(S_1,S_2) \leq C_\kappa.
\end{equation}
 Finally, by the triangle inequality,
$$
 |\Phi_{Q_1}-\Phi_{Q_2}| \leq  \sum_{i=1,2} |(\phi_i)_{Q_i}-(\phi_i)_{S_i}| + |(\phi_1)_{S_1}-(\phi_1)_{S_2}| + |(\phi_1)_{S_2}-(\phi_2)_{S_2'}|.
$$
By Lemma~\ref{lem_d2_proper}  and \eqref{temp_05}, we can bound the first two terms by $\omega_{\Omega}(\phi_1,\ell(S_1))+\omega_{\Omega'}(\phi_2,\ell(S_2))$. Because of (\ref{temp_08}), the third term can be handled as in Case 1 above, and is bounded by $C \omega_{\Omega}(\phi_1,(C \ell(Q_1)))$. The fourth term is controlled by the hypothesis \eqref{eqn-lemma3hyp1}.

\noindent
\textit{Case 3:} $Q_1\subset S_1',Q_2\subset S_2'$ where $S_i'\in E'$.
Let $S_1$ and $S_2$ be matching cubes of $S_1'$ and $S_2'$, respectively. Then the six cubes $Q_i, S_i$ and $S_i'$, $i = 1,2$, have comparable sidelengths. As in Case 2,
\begin{eqnarray*}
d(S_1,S_2) & \leq & d(S_1,S_1') + d(S_1',Q_2) + d(Q_2,Q_1)+ d(Q_1,S'_2)+ d(S_2',S_2) \\
& \leq & d(S_1,S_1') + d(S_2',S_2) + 3 d(Q_1,Q_2) \leq C'_\kappa \ell(Q_1)
\end{eqnarray*}
 and therefore
 \begin{equation}
 \label{temp_19}
 d_2(S_1,S_2) \leq C_\kappa.
\end{equation}
Finally,
$$
 |\Phi_{Q_1}-\Phi_{Q_2}| \leq \sum_{i=1,2} \left[|(\phi_2)_{Q_i}-(\phi_2)_{S_i'}| + |(\phi_1)_{S_i}-(\phi_2)_{S_i'}|\right]+|(\phi_1)_{S_1}-(\phi_1)_{S_2}|,
$$
where by \eqref{temp_05}
$$
|(\phi_2)_{Q_i}-(\phi_2)_{S_i'}| \leq C \omega_{\Omega'}(\phi_2,\ell(S'_i)).
$$
Moreover by Lemma \ref{lem_d1} and (\ref{temp_19})
$$
|(\phi_1)_{S_1}-(\phi_1)_{S_2}| \leq C \omega_{\Omega}(\phi_1,C \ell(Q))
$$
and by hypothesis \eqref{eqn-lemma3hyp1}
$$
|(\phi_1)_{S_i}-(\phi_2)_{S_i'}| \leq \eta(\ell(S'_i)).
$$
 \end{proof}

\begin{proof}[Proof of Proposition \ref{main_lemma_3}]
By Lemma \ref{lem_main_one}, we need to establish the existence of a modulus of continuity $\delta$ such that
\begin{equation}
\label{temp_01}
\sup_{\substack{\text{dyadic } Q\subset \Omegatil\\ \ell(Q)\leq t}} \fint_Q |\Phi(x) - \Phi_Q| dx \leq \delta(t)
\end{equation}
and
 \begin{equation}
 \label{temp_02}
|\Phi_{Q_1} - \Phi_{Q_2}|\leq \delta(t),
\end{equation}
for any adjacent dyadic cubes $Q_1,Q_2$ with $\ell(Q_1)=\ell(Q_2)\leq t$.

To prove \eqref{temp_01}, it is enough to consider
$$
\fint_Q |\Phi(x) - \Phi_Q| dx
$$
for $Q$ that does not lie in any $S\in E$ nor in any $S'\in E'$, for otherwise $\delta(t)$ can be chosen as $\omega_{\Omega}(\phi_1,t) + \omega_{\Omega'}(\phi_2,t)$.

Let $Q\subset \Omegatil $ be a given dyadic cube such that $Q\not \subset S$ for any $S\in E\cup E'$.
It is shown in the proof of Lemma 2.11 in \cite{Jones} that for any such $Q$, there exists $r\in \mathbb{N}$ depending only on the geometry of $\Omega$ (namely
$r \approx \log \kappa$, where $\kappa$ is the constant in the Jones condition \eqref{eqn-JonesCond}) and a  sequence
$$
\cF = \bigcup_{m = 1}^\infty \cF_m, \quad \cF_m = \{Q^m_j\}
$$
of collections of dyadic subcubes of $Q$ with disjoint interiors
such that
\begin{enumerate}[label=(\roman*)]
\item each $Q^m_j \in \cF_m$ is contained in some $S\in E\cup E'$, and
$$2^{-mr} \ell(Q) =\ell(Q^m_j)\leq \ell(S) < \ell(Q^{m-1}_k) = 2^{-(m-1)r} \ell(Q),$$
where $Q^{m-1}_k$ is the parent of $Q^m_j$ (note that $Q^{m-1}_k \notin \cF_{m-1}$)
\item there exists a constant $q < 1$ with depends only on $r$ and the dimension
(in \cite{Jones} the proof is done for dimension two and $q = 1 - \kappa^{-8}$)
such that
$$\sum_{\cF_m} |Q^m_j| \leq \sum_{Q^{m-1}_k \notin \cF_{m-1}} |Q^{m-1}_k| \leq q^{m-1} |Q|$$
\item $\Big|Q\setminus \displaystyle{\bigcup_{m=1}^\infty \bigcup_{\cF_m}Q^m_j}\Big|=0$
\item for each $Q^m_j \in \cF_m$ there is a cube $Q^{m-1}_{k'} \in \cF_{m-1}$ such that
$$d(Q^m_j,Q^{m-1}_{k'}) \leq 2^{-r(m-2)}\ell(Q) = 4^r\ell(Q^m_j)$$
(this follows from the fact, explained at the end of the proof of Lemma 2.11 in \cite{Jones}, that both $Q^m_j$
and $Q^{m-1}_{k'}$ are contained in the grandparent cube $Q^{m-2}_{i}$).
\end{enumerate}

Therefore, fixing any $Q_0\in \cF_1$, we have
\begin{eqnarray}
\label{eqn-Phi0}
\lefteqn{\fint_Q |\Phi(x) - \Phi_{Q}| dx  \leq 2\fint_Q |\Phi(x) - \Phi_{Q_0}| dx}  \\
\nonumber
& = & \frac{2}{|Q|} \sum_{m=1}^\infty \sum_{\cF_m} \int_{Q^m_j} |\Phi(x) - \Phi_{Q_0}| dx  \\
\nonumber
& \leq & 2 \sum_{m=1}^\infty \sum_{\cF_m}\frac{|Q^m_j|}{|Q|} \left(\fint_{Q^m_j} |\Phi(x) - \Phi_{Q^m_j}| dx+ |\Phi_{Q^m_j} -\Phi_{Q_0}|\right)\\
\nonumber
& \leq &
2  (\omega_{\Omega}(\phi_1,\ell(Q)) + \omega_{\Omega'}(\phi_2,\ell(Q))) \sum_{m=1}^\infty \sum_{\cF_m}  \frac{|Q^m_j|}{|Q|} \\
\nonumber
&&+ 2\sum_{m=1}^\infty m\sum_{\cF_m} \frac{|Q^m_j|}{|Q|}\frac{|\Phi_{Q^m_j} -\Phi_{Q_0}|}{m} \\
\nonumber
& \leq &
2 (\omega_{\Omega}(\phi_1,\ell(Q)) + \omega_{\Omega'}(\phi_2,\ell(Q))) \sum_{m=1}^\infty q^{m-1}\\
\nonumber
&&+ 2  \left(\sup_{m} \sup_{Q^m_j\in\cF_m}\frac{|\Phi_{Q^m_j} -\Phi_{Q_0}|}{m} \right)  \sum_{m=1}^\infty m q^{m-1},
\end{eqnarray}
where we have used (ii).  Since $q < 1$, the sums converge so it remains to estimate the supremum in the second term.

We can use property (iv) $m$ times to write
$$
 |\Phi_{Q^m_j}-\Phi_{Q_0}| \leq \sum_{i = 1}^m |\Phi_{Q^i_{j_i}}-\Phi_{Q^{i-1}_{j_{i-1}}}|
$$
where $Q^m_{j_m} = Q^m_j$, $Q^0_{j_0} = Q_0$ and for $i = 1, \ldots, m$, $Q^{i-1}_{j_{i-1}}$ is a cube in $\cF_{i-1}$
which is within distance $4^r\ell(Q^i_{j_i})$ of $Q^i_{j_i}$.  By properties (i) and (iv) and the definition of $d_2$, we can apply Lemma~\ref{lem_used_twice2}
to each term in the sum, with a constant $C_0$ depending on $r$, to obtain
$$
 |\Phi_{Q^m_j}-\Phi_{Q_0}| \leq m \; \eta'(\ell(Q))
$$
for some modulus of continuity $\eta'$ satisfying \eqref{eqn-Linfty'}.  This completes the proof of \eqref{temp_01}, with the modulus $\delta$ also
bounded by a constant multiple of $\|\omega_{\Omega}(\phi_1,\cdot) \|_\infty + \|\omega_{\Omega'}(\phi_2,\cdot) \|_\infty + \|\eta \|_\infty$.

It remains to prove \eqref{temp_02}.  Given adjacent dyadic cubes $Q_1,Q_2$ with $\ell(Q_1)=\ell(Q_2)\leq t$,
we can decompose each of them into collections of cubes $\cF_m(Q_i)$, as above.  Fix subcubes $Q_0 \in \cF_1(Q_1)$, $\widetilde{Q_0} \in \cF_1(Q_2)$.
Then we have
\begin{eqnarray*}
|\Phi_{Q_1} - \Phi_{Q_2}|
& \leq &  |\Phi_{Q_1} - \Phi_{Q_0}|  +  |\Phi_{Q_0} - \Phi_{\widetilde{Q_0}}| + |\Phi_{\widetilde{Q_0}} - \Phi_{Q_2}|\\
& \leq &  \fint_{Q_1} |\Phi(x) - \Phi_{Q_0}| dx  + \fint_{Q_2} |\Phi(x) - \Phi_{\widetilde{Q_0}}| dx  +  |\Phi_{Q_0} - \Phi_{\widetilde{Q_0}}|\\
& \leq &  2\delta(t) +  |\Phi_{Q_0} - \Phi_{\widetilde{Q_0}}|
\end{eqnarray*}
where for the last inequality we repeat the estimates for \eqref{eqn-Phi0}.
To estimate $ |\Phi_{Q_0} - \Phi_{\widetilde{Q_0}}|$ we again use Lemma~\ref{lem_used_twice2}, since by (i) $Q_0, \widetilde{Q_0}$ are
contained in Whitney cubes $S$, $\widetilde{S}$, respectively, with
$$\ell(S)/\ell(Q),\; \ell(\widetilde{S})/\ell(\widetilde{Q_0}) \in [1, 2^r).$$
Moreover, $d(Q_0,\widetilde{Q_0}) \leq 2\ell(Q_1)$ and since their sidelengths are equal, this means $d_2(Q_0,\widetilde{Q_0})$ is bounded
by a constant.
 \end{proof}

    \section{Concluding remarks}
    In this paper we focused on the extension problem for a bounded domain $\Omega$, while in \cite{Jones}, the extension is proved first for the case where $\Omega$ contains Whitney cubes of arbitrarily large size, and then the proof is adapted to the case where there is a largest size $L$.  Our proof is based on this latter construction, and depends on the constant $L$.
 To modify the proof to apply to an unbounded domain $\Omega$, one first needs to consider the various definition of vanishing mean oscillation discussed in the introduction
 for the case $\Omega = \Rn$ (Sarason's $\VMO$, Coifman-Weiss $\CMO$, or the nonhomogeneous space $\vmo$) and their analogues for an arbitrary unbounded domain $\Omega$.
 This is the subject of a forthcoming paper.

It is worth emphasizing that in part (ii) of Theorem \ref{thm1} we are only assuming a bounded extension map from $\VMO(\Omega)$ to $\BMO(\Rn)$, which makes sense given that the extension map we exhibited in the proof of part (i), while mapping $\VMO(\Omega)$ to $\VMO(\Rn)$, only preserves (up to a constant) the $L^\infty$ norm of the modulus of continuity, but
does not preserve it in any other sense.  This, as well as the dependence on the constant $L$, is evident especially in Proposition~\ref{main_lemma_2}.

By part (ii) of Theorem \ref{thm1}, if $\Omega$ is a domain that does not satisfy the Jones condition \eqref{eqn-JonesCond}, then there is no bounded linear extension of $\VMO(\Omega)$ to $\BMO(\Rn)$. Nevertheless, there exists a weaker local version of the Jones condition \eqref{eqn-JonesCond}, the so called $(\epsilon,\delta)$ condition  (see \cite{Jones2} p.\ 73 or  Definition 9.49 in \cite{Brudnyi}). It seems natural to ask whether the extension problem in such domains has a positive answer, if one considers not $\VMO(\Omega)$ but some suitably defined local version.

	\providecommand{\bysame}{\leavevmode\hbox to3em{\hrulefill}\thinspace}
\providecommand{\MR}{\relax\ifhmode\unskip\space\fi MR }
% \MRhref is called by the amsart/book/proc definition of \MR.
\providecommand{\MRhref}[2]{%
  \href{http://www.ams.org/mathscinet-getitem?mr=#1}{#2}
}
\providecommand{\href}[2]{#2}


\begin{thebibliography}{10}
\bibitem{italians1}
Paolo Acquistapace, \emph{On {BMO} regularity for linear elliptic systems},
  Ann. Mat. Pura Appl. (4) \textbf{161} (1992), 231--269.

\bibitem{AuscherRuss}
Pascal Auscher and Emmanuel Russ, \emph{Hardy spaces and divergence operators
  on strongly {L}ipschitz domains of {$\Rn$}}, J. Funct. Anal.
  \textbf{201} (2003), no.~1, 148--184.

\bibitem{Toro}
Jonas Azzam, Steve Hofmann, Jos\'e Mar\'\i a Martell, Kaj Nystr\"om, and
  Tatiana Toro, \emph{A new characterization of chord-arc domains}, J. Eur.
  Math. Soc. (JEMS) \textbf{19} (2017), no.~4, 967--981.

\bibitem{BlascoPerez}
Oscar Blasco and M.~Amparo P\'erez, \emph{On functions of integrable mean
  oscillation}, Rev. Mat. Complut. \textbf{18} (2005), no.~2, 465--477.

\bibitem{Bourdaud}
G\'erard Bourdaud, \emph{Remarques sur certains sous-espaces de {${\rm
  BMO}(\Rn)$} et de {${\rm bmo}(\Rn)$}}, Ann. Inst. Fourier
  (Grenoble) \textbf{52} (2002), no.~4, 1187--1218.

\bibitem{BN2}
Ha\"im Brezis and Louis Nirenberg, \emph{Degree theory and {BMO}. {II}.
  {C}ompact manifolds with boundaries}, Selecta Math. (N.S.) \textbf{2} (1996),
  no.~3, 309--368.

\bibitem{Brudnyi}
Alexander Brudnyi and Yuri Brudnyi, \emph{Methods of geometric analysis in
  extension and trace problems. {V}olume 2}, Monographs in Mathematics, vol.
  103, Birkh\"auser/Springer Basel AG, Basel, 2012.

\bibitem{Buckley}
Stephen~M. Buckley, \emph{Inequalities of {J}ohn-{N}irenberg type in doubling
  spaces}, J. Anal. Math. \textbf{79} (1999), 215--240.

\bibitem{Chang}
Der-Chen Chang, \emph{The dual of {H}ardy spaces on a bounded domain in {${\bf
  R}^n$}}, Forum Math. \textbf{6} (1994), no.~1, 65--81.

\bibitem{CoifmanWeiss}
Ronald~R. Coifman and Guido Weiss, \emph{Extensions of {H}ardy spaces and their
  use in analysis}, Bull. Amer. Math. Soc. \textbf{83} (1977), no.~4, 569--645.

\bibitem{Dafni} Galia Dafni, \emph{Nonhomogeneous div-curl lemmas
and local Hardy spaces},  Adv. Differential Equations  \textbf {10}
(2005), 505--526.

\bibitem{DevoreLorentz}
Ronald~A. DeVore and George~G. Lorentz, \emph{Constructive approximation},
  Grundlehren der Mathematischen Wissenschaften [Fundamental Principles of
  Mathematical Sciences], vol. 303, Springer-Verlag, Berlin, 1993.

\bibitem{Gehring_Osgood}
F.~W. Gehring and B.~G. Osgood, \emph{Uniform domains and the quasihyperbolic
  metric}, J. Analyse Math. \textbf{36} (1979), 50--74 (1980).

\bibitem{Goldberg}
David Goldberg, \emph{A local version of real {H}ardy spaces}, Duke Math. J.
  \textbf{46} (1979), no.~1, 27--42.

\bibitem{Holden}
Peter~J. Holden, \emph{Extension theorems for functions of vanishing mean
  oscillation}, Pacific J. Math. \textbf{142} (1990), no.~2, 277--295.

\bibitem{Jerison_Kenig}
David~S. Jerison and Carlos~E. Kenig, \emph{Boundary behavior of harmonic
  functions in nontangentially accessible domains}, Adv. in Math. \textbf{46}
  (1982), no.~1, 80--147.

\bibitem{JN}
F.~John and L.~Nirenberg, \emph{On functions of bounded mean oscillation},
  Comm. Pure Appl. Math. \textbf{14} (1961), 415--426.

\bibitem{Jones}
Peter~W. Jones, \emph{Extension theorems for {BMO}}, Indiana Univ. Math. J.
  \textbf{29} (1980), no.~1, 41--66.

\bibitem{Jones2}
\bysame, \emph{Quasiconformal mappings and extendability of functions in
  {S}obolev spaces}, Acta Math. \textbf{147} (1981), no.~1-2, 71--88.

\bibitem{Martio}
O.~Martio and J.~Sarvas, \emph{Injectivity theorems in plane and space}, Ann.
  Acad. Sci. Fenn. Ser. A I Math. \textbf{4} (1979), no.~2, 383--401.

\bibitem{ReimannRychener}
Hans~Martin Reimann and Thomas Rychener, \emph{Funktionen beschr\"ankter
  mittlerer {O}szillation}, Lecture Notes in Mathematics, Vol. 487,
  Springer-Verlag, Berlin-New York, 1975.

\bibitem{Sarason}
Donald Sarason, \emph{Functions of vanishing mean oscillation}, Trans. Amer.
  Math. Soc. \textbf{207} (1975), 391--405.

\bibitem{Stein}
Elias~M. Stein, \emph{Singular integrals and differentiability properties of
  functions}, Princeton Mathematical Series, No. 30, Princeton University
  Press, Princeton, N.J., 1970.



\bibitem{italians2}
Maria Transirico, Mario Troisi, and Antonio Vitolo, \emph{B{MO} spaces on
  domains of {${\bf R}^n$}}, Ricerche Mat. \textbf{45} (1996), no.~2, 355--378.

\bibitem{Uchiyama}
Akihito Uchiyama, \emph{On the compactness of operators of {H}ankel type},
  T\^ohoku Math. J. (2) \textbf{30} (1978), no.~1, 163--171.




\end{thebibliography}
\end{document}